\newtheorem{thm}{Theorem}[section]
\newtheorem{cor}[thm]{Corollary}
\newtheorem{lem}[thm]{Lemma}
\newtheorem{prop}[thm]{Proposition}
\theoremstyle{definition}
\newtheorem{defn}[thm]{Definition}
\theoremstyle{remark}
\newtheorem{rem}[thm]{Remark}
\newtheorem{ex}[thm]{Example}
\numberwithin{equation}{section}
\newcommand\restr[2]{{
  \left.\kern-\nulldelimiterspace 
  #1 
  \littletaller 
  \right|_{#2} 
  }}
\newcommand{\littletaller}{\mathchoice{\vphantom{\big|}}{}{}{}}
\begin{document}
\title[On super-rigidity of Gromov's random monster group]{On super-rigidity of Gromov's random monster group}
\author[Das]{Kajal Das}
\address{Statistics and Mathematics Unit, Indian Statistical Institute\\  203 Barrackpore Trunk Road\\Kolkata 700 108, India}
\email{kdas.math@gmail.com}

\maketitle
\textbf{Abstract:} In this article, we show super-rigidity of Gromov's random monster group. We prove that any morphism $\phi_\alpha$ from Gromov's random monster group $\Gamma_\alpha$ to the group $G$ has finite image for almost all $\alpha$, where $G$ is any of the following types of groups: mapping class group $MCG(S_{g,b})$, braid group $B_n$,  outer automorphism group of a free group $Out(F_N)$,  automorphism group of a free group $Aut(F_N)$,  hierarchically hyperbolic group, a-$L^p$-menable group or K-amenable group. We introduce another property called hereditary super-rigidity and prove that $\Gamma_\alpha$ has hereditary super-rigidity with respect to an a-$L^p$-menable group or a K-amenable group. We also establish a stability theorem for the groups with respect to which $\Gamma_\alpha$ has super-rigidity and hereditary super-rigidity. 

\vspace{5mm}

\textbf{Mathematics Subject Classification (2020):}  20F65, 20F67, 20P05, 53C24, 57K20. 

\vspace{5mm}

\textbf{Key terms:}:  Gromov's random monster group, Gromov hyperbolic space, elementary and non-elementary action, mapping class group, braid group, automorphism and outer automorphism group of a free group, acylindrically hyperbolic group, hierarchically hyperbolic group, a-$L^p$-menable group, K-amenable group.

\section{Introduction}

In the article \cite{Gro03}, Gromov first introduced `random monster group' as a random model of finitely generated infinitely presented groups.  We briefly recall the construction of Gromov's random monster group. Given three integers $d\geq 3$, $k\geq 2$ and $j\geq 1$ and an infinite sequence of large girth finite $d$-regular connected graphs $\{\Omega_n\}$, we label independently at random each edge of every $\Omega_n$
with a word of length $j$ in the free group on $k$-generators $F_k$. We define a group $\Gamma_\alpha$ (corresponding to a fixed labelling $\alpha$) as the quotient of $F_k$ by the normal closure of 
the set of words corresponding to closed loops of $\{\Omega_n\}$. The collection of groups $\{\Gamma_\alpha\}$ can be given a natural probability measure. The main interest of this construction is that for appropriate parameters $d,k,j$, if one chooses $\{\Omega_n\}$ to be a suitable sequence of expander graphs, the group $\Gamma_\alpha$ almost surely does not coarsely embed into a Hilbert space, because the sequence $\{\Omega_n\}$ ``weakly'' embeds in the Cayley graph of $\Gamma_\alpha$. Also, it provides a counterexample to the Baum-Connes conjecture for groups with coefficients in commutative $C^*$-algebra \cite{HLS02}. For more information on Gromov's random monster group, we refer the reader 
to the article \cite{AD08}.

However, the first rigidity property (or fixed-point property) of Gromov's random monster group can be seen in terms of Property (T) of this group (see \cite{Gro03}, \cite{Sil03}). We recall that a discrete group has \textit{Property (T)} if and only if every isometric affine action of the discrete group on a Hilbert space Y has a fixed point. Later, this result has been generalized in \cite{NS11} for $p$-uniformly convex metric spaces and for a high girth $p$-expanders $\{\Omega_n\}$. The metric spaces include $L^p$-space ($1< p<\infty$), CAT(0) spaces etc. It is well known that if a discrete group has Property (T), then it satisfes Serre's Property (FA). We racall that a group satisfies \textit{Property (FA)} if every isometric action on a simplicial tree fixes a point (\cite{Ser03}). Property (FA) can be broadly generalized as hyperbolically rigid . A group $G$ is said to be \textit{hyperbolically rigid} if any isometric action of the group on a Gromov-hyperbolic space is elementary (see Subsection 2.2 for the definition of elementary action). This kind of rigidity result has been obtained for Gromov's random monster group by Gruber-Sisto-Tessera in \cite{GST20} under a different condition on $\{\Omega_n\}$.  They assume that the diameter-by-girth ratio of the graphs $\{\Omega_n\}$ are uniformly bounded instead of the condition that $\{\Omega_n\}$ is a sequence of expanders.   The details of these results can be obtained in Subsection 2.4. 

In this article, we study the following type of question: Let $\phi_\alpha:\Gamma_\alpha\rightarrow G $ be a group homomorphism for all $\alpha$, where $G$ is a countable discrete group. For which groups $G$, the image of $\phi_\alpha$ is finite for a.e. $\alpha$? If a group $G$ satisfies the above property, we say that $\{\Gamma_\alpha\}$ has \textit{super-rigidity} with respect to $G$. If $G$ is a linear group (i.e. a subgroup of $GL_m(K)$, where $K$ is a field), we say that $\{\Gamma_\alpha\}$ has \textit{linear super-rigidity} with respect to $G$; for other groups we say that it has \textit{non-linear super-rigidity}. In this article, we prove such super-rigidity results with respect to mapping class groups, braid groups, $Out(F_N)$(outer automorphism group of free group with $N$ generators), $Aut(F_N)$ (automorphism group of free group with $N$ generators) hierarchically hyperbolic groups, a-$L^p$-menable groups . For acylindrically hyperbolic group $G$, we prove a weaker result. We prove that $\Gamma_\alpha$ has elliptic image in $G$ for a.e. $\alpha$. In our results, we assume that the graphs $\{\Omega_n\}$ is a sequence of $d$-regular expander graphs, the diameter-by-girth ratio of the graphs are bounded by $C$, i.e., $diam(\Omega_n)\leq C girth (\Omega_n)$ for all $n\in\mathbb{N}$ and $girth(\Omega_n)\rightarrow\infty$ as $n\rightarrow\infty$. For the case of a-$L^P$-menability group, we assume that $\{\Omega_n\}_{n\in\mathbb{N}}$ is a $p$-expander with respect to $L^p$-space for $1< p<\infty$ and $girth(\Omega_n)\geq C\hspace*{1mm} log(|\Omega_n|)$ for some constant $C$ and for all $n\in\mathbb{N}$ . Now, we describe our main results. The notations used in these results can be found in Section 2.

\begin{thm}\label{mcg}
For every $j\geq 1$ and for any morphism $\Gamma_\alpha\rightarrow MCG(S_{g,b})$, $\Gamma_\alpha$ has finite image in $MCG(S_{g,b})$ for almost every $\alpha\in \mathcal{A}(\Omega, T^j)$.
\end{thm}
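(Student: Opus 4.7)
The plan is to combine the Masur--Minsky curve complex machinery with the hyperbolic rigidity theorem of Gruber--Sisto--Tessera. Since $MCG(S_{g,b})$ acts acylindrically by isometries on its curve complex $\mathcal{C}(S_{g,b})$, which is Gromov-hyperbolic, any morphism $\phi_\alpha\colon\Gamma_\alpha\to MCG(S_{g,b})$ pulls back to an isometric action of $\Gamma_\alpha$ on a Gromov-hyperbolic space. The hyperbolic rigidity recalled in Subsection~2.4 then forces this action to be elementary for a.e.\ $\alpha$, so $\phi_\alpha(\Gamma_\alpha)$ must be an elementary subgroup of $MCG(S_{g,b})$ with respect to the curve complex action.

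Next I would invoke the Ivanov--McCarthy classification (equivalently, the description of elementary subgroups of the acylindrically hyperbolic group $MCG$): such a subgroup is one of \textbf{(a)} finite; \textbf{(b)} virtually infinite cyclic, generated up to finite index by a single pseudo-Anosov; or \textbf{(c)} reducible, i.e.\ it fixes a multicurve $\sigma\subset S_{g,b}$. Case (a) is the desired conclusion.

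In case (b), $\Gamma_\alpha$ surjects onto an infinite virtually cyclic group. Since $\{\Omega_n\}$ is a sequence of expanders, $\Gamma_\alpha$ has Kazhdan's property~(T) for a.e.\ $\alpha$ via the Gromov--Silberman construction; any quotient inherits (T), and no infinite virtually cyclic group has (T), so the image is forced to be finite here. In case (c) I would induct on the complexity $\xi=3g+b-3$. After passing to the finite-index subgroup of $\Gamma_\alpha$ whose image stabilizes each component of $\sigma$, the image lands in $\mathrm{Stab}(\sigma)$, which fits into
\[
1\longrightarrow \Z^{|\sigma|}\longrightarrow \mathrm{Stab}(\sigma)\longrightarrow \prod_i MCG(S_i)\longrightarrow 1,
\]
with kernel generated by Dehn twists along $\sigma$ and quotient a product of mapping class groups of the complementary subsurfaces $S_i$, each of strictly smaller complexity. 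Property~(T) kills the amenable kernel; the induction hypothesis, applied componentwise via the curve complexes $\mathcal{C}(S_i)$, forces finite image in each factor of the quotient. Assembling these gives that $\phi_\alpha(\Gamma_\alpha)$ is finite. The base cases (low-complexity subsurfaces, where $MCG$ is virtually abelian) are again handled by property~(T).

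The hard part will be executing the induction cleanly: I must verify that each restricted action on a sub-curve-complex still falls under the scope of the Gruber--Sisto--Tessera rigidity statement (so that ``elementary on $\mathcal{C}(S_{g,b})$'' propagates to ``elementary on each $\mathcal{C}(S_i)$''), and I must manage the finite-index passages carefully so that finiteness on the kernel and on each quotient factor assembles correctly into finiteness of $\phi_\alpha(\Gamma_\alpha)$. A cleaner alternative, if available, is the general principle that an acylindrically hyperbolic group such as $MCG(S_{g,b})$ contains no infinite property-(T) subgroup, which directly finishes the argument once elementarity is in hand; but invoking this shortcut presupposes essentially the same structural ingredients in disguise.
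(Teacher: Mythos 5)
Your main argument is, step for step, the paper's proof: Masur--Minsky hyperbolicity of $\mathcal{C}(S_{g,b})$ (Theorem \ref{mcgcchyp}), the Gruber--Sisto--Tessera theorem (Theorem \ref{nonelmhyp}) forcing the image to act elementarily, the Ivanov/McCarthy--Papadopoulos trichotomy (Theorem \ref{mcg3act}), Property (T) via Corollary \ref{surjectZ} to rule out the virtually cyclic case, and induction on complexity through the Birman--Lubotzky--McCarthy cutting sequence in the reducible case. Even the finite-index bookkeeping you flag as the hard part is a legitimate concern, and in fact the paper glosses over it: its inductive hypothesis is applied to $(p_j\circ\psi_\alpha)(H^0_\alpha)$, the image of a finite-index subgroup of $\Gamma_\alpha$ rather than of $\Gamma_\alpha$ itself, so strictly the induction should carry the hereditary form of the statement.

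Two corrections. First, your base-case parenthetical is wrong: the mapping class group of the torus is $SL_2(\mathbb{Z})$, which is virtually free, not virtually abelian; your Property (T) argument still disposes of it (an infinite virtually free group cannot have (T), since (T) passes to finite-index subgroups and free groups surject onto $\mathbb{Z}$), whereas the paper instead invokes a-$L^p$-menability of $SL_2(\mathbb{Z})$ via Theorem \ref{a-lp-menability}. Second, and more importantly, the ``cleaner alternative'' you float at the end is not available: it is false as a general principle that acylindrically hyperbolic groups contain no infinite Property (T) subgroups. Uniform lattices in $Sp(n,1)$ are non-elementary hyperbolic, hence acylindrically hyperbolic, and have (T); likewise $SL_3(\mathbb{Z})\ast\mathbb{Z}$ is acylindrically hyperbolic and contains $SL_3(\mathbb{Z})$. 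A (T) group can perfectly well act non-elementarily on a hyperbolic space, which is exactly why the elementarity input must come from the expander/dg-bounded structure of the relator graphs (Theorem \ref{nonelmhyp}) and not from (T). Since you offered this only as a hedged aside, it does not affect the correctness of your main argument, but it is not ``the same ingredients in disguise''---it is a genuinely stronger (and false) claim.
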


\begin{cor}\label{braid}

For every $j\geq 1$ and for any morphism $\Gamma_\alpha\rightarrow B_n$, $\Gamma_\alpha$ has finite image in $B_n$ for almost every $\alpha\in \mathcal{A}(\Omega, T^j)$.

\end{cor}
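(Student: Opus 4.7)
The plan is to deduce Corollary \ref{braid} directly from Theorem \ref{mcg} by invoking a classical faithful representation of the braid group inside a mapping class group, so that the super-rigidity statement transfers through the embedding.

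First, I would appeal to the classical result of Artin: the braid group $B_n$ on $n$ strands is isomorphic to the mapping class group of a disk with $n$ marked interior points that fixes the boundary pointwise. Equivalently, after replacing marked points by small boundary components, this gives an injective homomorphism $\iota : B_n \hookrightarrow MCG(S_{0,n+1})$ into the mapping class group of a sphere with $n+1$ boundary components, which is of the form $S_{g,b}$ covered by Theorem \ref{mcg} (with $g=0$ and $b=n+1$).

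Next, given any morphism $\phi_\alpha : \Gamma_\alpha \to B_n$, I would consider the composition
\[
\iota \circ \phi_\alpha : \Gamma_\alpha \longrightarrow MCG(S_{0,n+1}).
\]
By Theorem \ref{mcg} applied to $\iota\circ\phi_\alpha$, the image of $\iota\circ\phi_\alpha$ in $MCG(S_{0,n+1})$ is finite for almost every $\alpha \in \mathcal{A}(\Omega,T^j)$. Since $\iota$ is injective, its restriction to the image of $\phi_\alpha$ is a bijection onto the image of $\iota\circ\phi_\alpha$, so $\phi_\alpha(\Gamma_\alpha)$ itself is finite for almost every such $\alpha$.

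The corollary is therefore a formal consequence of Theorem \ref{mcg}, and there is no substantive obstacle beyond invoking the standard embedding of $B_n$ into an appropriate mapping class group; in particular, no new probabilistic or geometric argument on the random graphs $\{\Omega_n\}$ is needed, since the full-measure set of good labellings $\alpha$ is already furnished by the theorem.
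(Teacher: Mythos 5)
There is a genuine gap at the key step: the claimed embedding $\iota\colon B_n\hookrightarrow MCG(S_{0,n+1})$ does not exist under the paper's definition of the mapping class group. Artin's theorem identifies $B_n$ with the mapping class group of the disc with $n$ marked points where the boundary is fixed \emph{pointwise} (by homeomorphisms and by isotopies). The paper's $MCG(S_{g,b})$ only requires the boundary components to be preserved \emph{set-wise}, and isotopies may rotate them; in that convention a Dehn twist about a boundary-parallel curve is isotopic to the identity. Consequently the natural map $B_n\rightarrow MCG(S_{0,n+1})$ kills the full twist $\Delta^2=(\sigma_1\cdots\sigma_{n-1})^n$, which generates the (infinite cyclic) center of $B_n$ for $n\geq 3$; its kernel is exactly the $\mathbb{Z}$ appearing in Proposition \ref{braidmcg}, so it is far from injective. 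In fact for $n=3$ no embedding whatsoever can exist: $B_3$ contains $\mathbb{Z}^2$ (generated by $\sigma_1$ and the central element $(\sigma_1\sigma_2)^3$), whereas $MCG(S_{0,4})$ in the paper's convention is the mapping class group of the four-punctured sphere, which is virtually free (its pure part is isomorphic to $F_2$) and hence contains no $\mathbb{Z}^2$. If instead you intend the pointwise-boundary mapping class group, that is a different group, not covered by Theorem \ref{mcg} as stated and proved; extending Theorem \ref{mcg} to it is precisely a central-extension problem, i.e.\ the very difficulty your argument skips.

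This is exactly what the paper's proof addresses: it uses the short exact sequence $1\rightarrow\mathbb{Z}\rightarrow B_n\rightarrow MCG_x(S_{n+1})\rightarrow 1$ of Proposition \ref{braidmcg}, applies Theorem \ref{mcg} to the quotient $MCG_x(S_{n+1})\leq MCG(S_{n+1})$, and then disposes of the central $\mathbb{Z}$ using hereditary super-rigidity with respect to $\mathbb{Z}$ (Corollary \ref{hsa-lp-menability}, ultimately resting on Corollary \ref{surjectZ}) combined with Theorem \ref{shortexact}. Your approach could be repaired in one of two ways: either run this extension argument, noting that the image of $\Gamma_\alpha$ in $B_n$ is virtually contained in the central $\mathbb{Z}$ once Theorem \ref{mcg} is applied to the quotient, and then invoke Corollary \ref{surjectZ}; or replace your target by a mapping class group in which the boundary twist of the disc stays essential, e.g.\ cap the marked disc with a \emph{twice}-marked disc so that $\partial D$ becomes an essential curve in the $(n+2)$-marked sphere (then the inclusion-induced map has trivial kernel since the complementary subsurface is neither a disc nor an annulus), taking care of the finite-index issues caused by the fact that braids permute the marked points while the paper's convention preserves each boundary component. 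As written, however, the corollary is not a formal consequence of Theorem \ref{mcg}.
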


\begin{thm}\label{outfn}
For every $j\geq 1$ and for any morphism $\Gamma_\alpha\rightarrow Out(F_n)$, $\Gamma_\alpha$ has finite image in $Out(F_n)$ for almost every $\alpha\in \mathcal{A}(\Omega, T^j)$.
\end{thm}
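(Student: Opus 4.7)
The strategy is to mirror the proof of Theorem \ref{mcg}, with the \emph{free factor complex} $\mathcal{FF}_N$ replacing the curve complex, and to proceed by strong induction on the rank $N$ (I write $N$ in place of the $n$ of the statement, to avoid clashing with the index of the graphs $\Omega_n$). For $N=1$ the group $Out(F_1)\cong\Z/2\Z$ is already finite; for $N=2$ one has $Out(F_2)\cong GL_2(\Z)$, which is virtually free and hence a-T-menable, so the Property (T) of $\Gamma_\alpha$ forces the image of any morphism into $Out(F_2)$ to be finite.

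For the inductive step, fix $N\geq 3$ and assume the statement for all smaller ranks. Given $\phi_\alpha\colon\Gamma_\alpha\to Out(F_N)$, consider the induced isometric action of $\Gamma_\alpha$ on $\mathcal{FF}_N$. By Bestvina--Feighn, $\mathcal{FF}_N$ is Gromov hyperbolic, so the hyperbolic rigidity of $\Gamma_\alpha$ recalled in Subsection 2.4 forces this action to be elementary for almost every $\alpha$. The loxodromic case would produce a virtually cyclic infinite quotient of $\Gamma_\alpha$, and the parabolic case (fixing a point of $\partial\mathcal{FF}_N$) would yield a nontrivial homomorphism from $\phi_\alpha(\Gamma_\alpha)$ to $\R$ via the translation-length cocycle; both are ruled out by Property (T). Hence the image $H_\alpha := \phi_\alpha(\Gamma_\alpha)$ has bounded orbits on $\mathcal{FF}_N$.

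By the Handel--Mosher subgroup classification, a subgroup of $Out(F_N)$ with bounded orbits on $\mathcal{FF}_N$ is either finite or preserves a proper free factor system $[\mathcal{A}] = \{[A_1],\ldots,[A_k]\}$. In the latter case, after passing to the finite-index subgroup of $H_\alpha$ that fixes each $[A_i]$, there is a natural restriction homomorphism $\rho\colon Out(F_N;[\mathcal{A}])\to\prod_{i=1}^k Out(A_i)$. Each $A_i$ has rank strictly less than $N$, so by the inductive hypothesis the image of $\rho\circ\phi_\alpha$ is finite almost surely.

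The crux is then to control $H_\alpha \cap \ker\rho$, where $\ker\rho$ is the relative group of twists and partial conjugations associated to $\mathcal{A}$. Unlike the mapping class group case of Theorem \ref{mcg}, where the analogous kernel is the abelian Dehn twist group and is dispatched immediately by Property (T), this kernel need not be amenable, and this is \emph{the main obstacle}. I would handle it by iterating the argument: $\ker\rho$ admits natural actions on relative hyperbolic complexes (such as a relative free factor graph or a cyclic splitting graph associated to the pair $(F_N,\mathcal{A})$), to which the hyperbolic rigidity of $\Gamma_\alpha$ applies again; each iteration strictly decreases a lexicographic complexity (for instance the pair $(N,k)$), so after finitely many descents one reaches a virtually abelian subgroup of $Out(F_N)$, where Property (T) closes the argument.
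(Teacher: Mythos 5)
Your argument tracks the paper's proof through its first half: the isometric action on the free factor complex $FF_N$, hyperbolicity via Bestvina--Feighn (Theorem \ref{FFNhyp}), elementarity of the action via Theorem \ref{nonelmhyp}, the Handel--Mosher classification (Theorem \ref{outfn3act}), and the exclusion of the virtually cyclic case via Corollary \ref{surjectZ}. (Your separate treatment of the parabolic case by a ``translation-length cocycle'' is both unnecessary --- Theorem \ref{outfn3act} applies to every subgroup regardless of the type of its elementary action --- and shaky, since a parabolic action has no loxodromic elements, hence no nonzero translation lengths to produce a homomorphism from.) The problem is the step you yourself call ``the crux'': controlling $H_\alpha\cap\ker\rho$. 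There your proposal stops being a proof and becomes a plan. You assert that the group of twists and partial conjugations acts on suitable ``relative free factor graphs or cyclic splitting graphs,'' that hyperbolic rigidity applies again in that relative setting, that each iteration strictly decreases a lexicographic complexity, and that the descent terminates in a virtually abelian subgroup; none of these claims is substantiated. You would need hyperbolicity theorems and a Handel--Mosher-type subgroup classification for those relative complexes, plus a termination argument --- and since the kernel is a Fouxe--Rabinovitch-type group that contains nonabelian free subgroups, ``virtually abelian after finitely many descents'' is exactly the kind of statement that needs proof rather than assertion. This is a genuine gap.

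The paper closes this step by a completely different and much shorter homological argument that your outline never touches. Writing $F_N=L\ast L'$ with a finite-index subgroup $H^0_\alpha\leq H_\alpha$ preserving $[L]$, it considers the induced action on the abelianization of $F_N$: by induction the actions of $H^0_\alpha$ on the abelianizations of $L$ and $L'$ factor through finite groups, so the image of $H^0_\alpha$ in $GL_N(\mathbb{Z})$ lies in a block triangular subgroup with finite diagonal blocks, which is finitely generated and virtually abelian; Corollary \ref{surjectZ} then forces this image to be finite, so $H_\alpha\cap\overline{IA_N}$ has finite index in $H_\alpha$, where $\overline{IA_N}$ is the kernel of $Out(F_N)\rightarrow GL_N(\mathbb{Z})$. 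The decisive input is then Theorem \ref{subIAn} (Bridson--Wade): \emph{every} non-trivial subgroup of $\overline{IA_N}$ surjects onto $\mathbb{Z}$. If $H_\alpha\cap\overline{IA_N}$ were non-trivial, a finite-index subgroup of $\Gamma_\alpha$ would surject onto $\mathbb{Z}$, contradicting Corollary \ref{surjectZ}; hence $H_\alpha\cap\overline{IA_N}=\{1\}$, and $H_\alpha$, containing the trivial group with finite index, is finite. This is precisely the ingredient your proposal is missing: instead of chasing the kernel through further hyperbolic complexes, the paper kills it outright by passing to homology and invoking the Bridson--Wade theorem.
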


\begin{cor}\label{autfn}
For every $j\geq 1$ and for any morphism $\Gamma_\alpha\rightarrow Aut(F_n)$, $\Gamma_\alpha$ has finite image in $Aut(F_n)$ for almost every $\alpha\in \mathcal{A}(\Omega, T^j)$.
\end{cor}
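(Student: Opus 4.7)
The plan is to derive Corollary \ref{autfn} from Theorem \ref{outfn} via the canonical short exact sequence
$$1 \to Inn(F_n) \to Aut(F_n) \to Out(F_n) \to 1,$$
combined with Property (T) of $\Gamma_\alpha$. First I would dispose of the trivial case $n = 1$, where $Aut(F_1) \cong \Z/2\Z$ is already finite, and assume $n \geq 2$ from now on.

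Given a morphism $\phi : \Gamma_\alpha \to Aut(F_n)$, I would post-compose with the natural projection to obtain $\bar\phi : \Gamma_\alpha \to Out(F_n)$. By Theorem \ref{outfn}, for almost every $\alpha \in \mathcal{A}(\Omega, T^j)$ the image $\bar\phi(\Gamma_\alpha)$ is finite; hence $H := \ker(\bar\phi)$ has finite index in $\Gamma_\alpha$ and $\phi(H) \subseteq Inn(F_n) \cong F_n$, the last isomorphism using that $F_n$ has trivial center for $n \geq 2$.

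Next I would show that $\phi|_H : H \to F_n$ has trivial image. Since $\{\Omega_n\}$ is assumed to be a sequence of $d$-regular expanders, $\Gamma_\alpha$ has Property (T) by the theorem of Silberman \cite{Sil03}; as Property (T) is inherited by finite-index subgroups, $H$ also has Property (T), and hence Serre's Property (FA). Composing $\phi|_H$ with the free left-multiplication action of $F_n$ on its Cayley tree $T$ (a $2n$-regular simplicial tree) yields an isometric action of $H$ on a tree, which by Property (FA) must fix a vertex. Since the $F_n$-action on $T$ is free, the stabiliser of any vertex is trivial, forcing $\phi(H) = \{1\}$.

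The short exact sequence $1 \to \phi(H) \to \phi(\Gamma_\alpha) \to \bar\phi(\Gamma_\alpha) \to 1$ then collapses to $\phi(\Gamma_\alpha) \cong \bar\phi(\Gamma_\alpha)$, which is finite, completing the argument. I do not anticipate a serious obstacle beyond bookkeeping: the only probabilistic input is the exceptional null set coming from Theorem \ref{outfn}, while the rest of the argument (Property (T), Property (FA), and the freeness of the tree action) is deterministic and uniform in $\alpha$. The one delicate point worth flagging is that one must genuinely use a fixed-point property such as (FA), rather than only hyperbolic rigidity as in \cite{GST20}: an elementary action of $H$ on $T$ could a priori leave $\phi(H)$ infinite cyclic (fixing a pair of ends), and this possibility is eliminated precisely by the stronger Property (FA).
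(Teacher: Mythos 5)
Your proof is correct, and it uses the same decomposition as the paper --- the short exact sequence $1\rightarrow Inn(F_n)\rightarrow Aut(F_n)\rightarrow Out(F_n)\rightarrow 1$ together with Theorem \ref{outfn} --- but it disposes of the inner part by a genuinely different key step. The paper invokes its abstract stability theorem (Theorem \ref{shortexact}) and kills the kernel via hereditary super-rigidity with respect to a-$FL^p$-menable groups (Corollary \ref{hsa-lp-menability}): $F_n$ is a-$L^p$-menable, and any homomorphism from a finite-index subgroup of $\Gamma_\alpha$ to such a group has finite (hence, inside a free group, trivial) image. That route ultimately rests on the Naor--Silberman fixed-point theorem (Theorem \ref{fixlp}) and Proposition 8.8 of \cite{BFGM07}, and therefore on the logarithmic-girth $p$-expander hypothesis. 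You instead unfold the stability argument by hand (passing to the finite-index subgroup $H=\ker(\bar\phi)$) and annihilate $\phi(H)\leq Inn(F_n)\cong F_n$ using Property (T) $\Rightarrow$ Property (FA) together with the freeness of the $F_n$-action on its Cayley tree. What each approach buys: the paper's argument is modular --- Theorem \ref{shortexact} and Corollary \ref{hsa-lp-menability} are reusable tools, also deployed for the braid group corollary --- while yours is more elementary and more economical in hypotheses, needing only the inputs already required by Theorem \ref{outfn} (Theorems \ref{nonelmhyp} and \ref{propt}) and avoiding the $L^p$ machinery and the logarithmic-girth assumption altogether. You also treat $n=1$ explicitly, which the paper's phrase ``since the center of $F_N$ is trivial'' silently excludes. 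Your closing remark correctly identifies why elementarity of the tree action alone would not suffice; alternatively, you could replace the (FA) step by Corollary \ref{surjectZ}: $\phi(H)$ is a subgroup of a free group, hence free, so if nontrivial it surjects onto $\mathbb{Z}$, contradicting the fact that the finite-index subgroup $H$ of $\Gamma_\alpha$ admits no such surjection. One small imprecision: Property (T) of $\Gamma_\alpha$ is itself only an almost-sure statement (Theorem \ref{propt}), so your exceptional null set is the union of the null sets coming from Theorems \ref{outfn} and \ref{propt}, not just the first; this is of course harmless.
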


\begin{thm}\label{acylin}
Let $G$ be an acylindrically hyperbolic group. For every $j\geq 1$ and for any morphism $\Gamma_\alpha\rightarrow G$, $\Gamma_\alpha$ has elliptic image in $G$  for almost every $\alpha\in \mathcal{A}(\Omega, T^j)$.
\end{thm}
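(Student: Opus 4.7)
The plan is to combine the Gruber--Sisto--Tessera hyperbolic rigidity theorem recalled in the introduction with Osin's classification of acylindrical actions and Property (T) of $\Gamma_\alpha$, the latter holding almost surely under the expander assumption on $\{\Omega_n\}$ by the work of Gromov and Silberman.

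Since $G$ is acylindrically hyperbolic, by Osin's characterization it admits a non-elementary cobounded acylindrical action on some Gromov-hyperbolic space $X$. Composition with $\phi_\alpha$ then gives an isometric action of $\Gamma_\alpha$ on $X$. The diameter-by-girth hypothesis on $\{\Omega_n\}$ allows us to invoke the Gruber--Sisto--Tessera rigidity theorem, which yields that for almost every $\alpha\in\mathcal{A}(\Omega,T^j)$ this action of $\Gamma_\alpha$ on $X$ is elementary.

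To upgrade \emph{elementary} to \emph{elliptic}, we pass to the image $H:=\phi_\alpha(\Gamma_\alpha)\leq G$. The $H$-action on $X$ has the same orbits as the $\Gamma_\alpha$-action and is therefore still elementary. Moreover, since acylindricity is inherited by subgroups, the $H$-action on $X$ is itself acylindrical. By Osin's classification of acylindrical actions on hyperbolic spaces, such an action falls into exactly one of the following types: (a) $H$ has bounded orbits in $X$, (b) $H$ is virtually cyclic and contains a loxodromic element, or (c) $H$ contains a rank-two free subgroup acting non-elementarily. Case (c) is excluded by the elementariness established above, so $H$ is either elliptic or virtually cyclic. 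The expander assumption guarantees that $\Gamma_\alpha$ has Kazhdan's Property (T) almost surely, and since (T) passes to quotients, $H$ inherits Property (T). A virtually cyclic group with Property (T) must be finite (because $\mathbb{Z}$ fails (T) and (T) is inherited by finite-index subgroups), in which case $H$ again has bounded orbits in $X$. In every case, $H$ is elliptic in $G$.

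The main point requiring care is the distinction between the action of $\Gamma_\alpha$ on $X$ (to which Gruber--Sisto--Tessera applies) and the action of the image $H$ (which is the one we can actually declare acylindrical, since an infinite kernel of $\phi_\alpha$ would destroy acylindricity of the pulled-back action). Beyond this bookkeeping the argument is a direct synthesis of existing machinery; the only genuine difficulty is identifying the correct combination of inputs --- hyperbolic rigidity, acylindricity, and Property (T) --- and checking that the two standing hypotheses on $\{\Omega_n\}$ supply each ingredient respectively.
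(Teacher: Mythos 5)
Your proposal is correct and follows essentially the same route as the paper: restrict an acylindrical action of $G$ on a hyperbolic space to the image $H_\alpha=\phi_\alpha(\Gamma_\alpha)$, use the Gruber--Sisto--Tessera theorem to rule out the non-elementary case of Osin's trichotomy, and then use Property (T) of $\Gamma_\alpha$ to rule out the infinite virtually cyclic case. The only cosmetic difference is that you exclude the virtually cyclic alternative by passing Property (T) to the quotient $H_\alpha$ directly, while the paper cites its Corollary that no finite-index subgroup of $\Gamma_\alpha$ surjects onto $\mathbb{Z}$; both are consequences of the same Property (T) input, so the arguments coincide in substance.
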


\begin{thm}\label{hhg}
Let $G$ be a hierarchically hyperbolic group. For every $j\geq 1$ and for any morphism $\Gamma_\alpha\rightarrow G$, $\Gamma_\alpha$ has finite image in $G$ for almost every $\alpha\in \mathcal{A}(\Omega, T^j)$.
\end{thm}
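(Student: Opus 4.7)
The plan is to argue by induction on the complexity of the hierarchically hyperbolic structure on $G$, using the top-level hyperbolic space at each step to reduce to strictly smaller HHGs, with the hyperbolic case as the base.

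Let $(G,\mathfrak{S})$ denote the HHG structure, let $S \in \mathfrak{S}$ be the $\sqsubseteq$-maximal domain, and let $\mathcal{C}S$ be the associated top-level hyperbolic space. The complexity (rank) of the structure is the supremum of cardinalities of pairwise orthogonal subsets of $\mathfrak{S}$, and this is the parameter on which I induct. The base case (rank at most $1$) is that $G$ is finite, virtually cyclic, or non-elementary hyperbolic: the first two are handled directly, or by using that $\Gamma_\alpha$ has Property $(T)$ and so has finite image in any virtually amenable target, while the third follows from Theorem \ref{acylin} applied to the acylindrical action of $G$ on its Cayley graph, in which bounded orbits are equivalent to finite image.

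For the inductive step, I use that any HHG acts acylindrically on its top-level hyperbolic space (Behrstock--Hagen--Sisto). Theorem \ref{acylin} then says that, for a.e.~$\alpha$, the image $\phi_\alpha(\Gamma_\alpha)$ is elliptic on $\mathcal{C}S$, i.e.~has bounded orbits there. The structural theory of HHGs --- in particular, the analysis of subgroups with uniformly bounded projection to the top-level hyperbolic space, following Durham--Hagen--Sisto and Abbott--Behrstock--Durham --- yields that, after possibly passing to a finite index subgroup, $\phi_\alpha(\Gamma_\alpha)$ is contained in a standard product region $G_{U_1}\times\cdots\times G_{U_k}$ corresponding to a pairwise orthogonal family $\{U_1,\dots,U_k\}$ of proper domains in $\mathfrak{S}$. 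Each factor $G_{U_i}$ inherits an HHG structure of strictly smaller rank, so the inductive hypothesis gives that the projection of the image to each $G_{U_i}$ is finite; hence the image of $\Gamma_\alpha$ in $G$ is itself finite.

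The principal obstacle I anticipate is making the structural reduction ``bounded orbits on $\mathcal{C}S$ implies containment in a proper standard product region'' rigorous in the required generality. Several technical points have to be handled: one typically obtains such a conclusion only after passing to a finite-index subgroup of the image, which is harmless since its preimage in $\Gamma_\alpha$ still has Property $(T)$ and is finite index in $\Gamma_\alpha$; one must rule out that $\phi_\alpha(\Gamma_\alpha)$ fixes a point at infinity of $\mathcal{C}S$ or preserves a quasi-line, both of which are killed by Property $(T)$ combined with the hyperbolic rigidity of $\Gamma_\alpha$ recalled in Subsection 2.4; and one must verify that the subdomains $U_i \sqsubsetneq S$ genuinely reduce rank, which is automatic once properness of the $U_i$ is guaranteed. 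These issues seem manageable but require carefully selecting an axiomatic formulation of HHGs that makes the structural statement available as a ready-made tool.
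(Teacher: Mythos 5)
Your outline follows essentially the same route as the paper's proof: acylindricity of the $G$-action on the top-level hyperbolic space $\hat{\mathcal{C}}S$ (Theorem \ref{hhgacyact}), then the argument of Theorem \ref{acylin} (Theorem \ref{nonelmhyp} plus Theorem \ref{acy3act} plus Corollary \ref{surjectZ}) to get bounded orbits of $H_\alpha=\phi_\alpha(\Gamma_\alpha)$ on $\hat{\mathcal{C}}S$, then the Durham--Hagen--Sisto structure theory to descend to proper domains, closed off by induction on complexity. The structural step you leave as a black box is made precise in the paper by exactly the DHS results you gesture at: Proposition \ref{hhgtitsalt} is used to exclude irreducible axial elements, and Proposition \ref{hhg2act} then produces a proper domain $U\in\mathcal{S}-\{S\}$ with $|H_\alpha\cdot U|<\infty$, so that a finite index subgroup of $H_\alpha$ stabilizes a proper domain and induction applies; this plays the role of your ``virtually contained in a standard product region'' claim.

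One detail in your plan is misdirected, though it does not sink the argument. You propose to \emph{rule out} a global fixed point of $H_\alpha$ in $\partial\hat{\mathcal{C}}S$ using Property (T). That cannot work: a group acting with bounded orbits (for instance a finite group) can perfectly well fix a boundary point, and Property (T) gives no obstruction here. The point is rather that this case is not an obstacle but an immediate conclusion: by Lemma \ref{hhgsubfin} (DHS Lemma 9.17), a subgroup with bounded orbits in $\hat{\mathcal{C}}S$ fixing a point of $\partial\hat{\mathcal{C}}S$ is finite, which is precisely the statement being proved. So the correct dichotomy, and the one the paper runs, is: if $H_\alpha$ fixes a boundary point, it is finite by Lemma \ref{hhgsubfin}; if not, Proposition \ref{hhg2act} applies and one inducts on complexity. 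With that correction, your proposal matches the paper's proof; note also that both arguments share the same implicit burden, namely that the inductive step is applied to a morphism from a finite index subgroup $\Gamma'_\alpha\leq\Gamma_\alpha$, so one needs the relevant inputs (in particular the conclusion of Theorem \ref{nonelmhyp}) for such subgroups, a point neither treatment spells out.
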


\begin{thm}\label{a-lp-menability}
Let $G$ be an a-$L^p$-menable group. For every $j\geq 1$ and for any morphism $\Gamma_\alpha\rightarrow G$, $\Gamma_\alpha$ has finite image in $G$ for almost every $\alpha\in \mathcal{A}(\Omega, T^j)$.
\end{thm}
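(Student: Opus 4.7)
The plan is to combine the fixed-point theorem of Naor--Silberman \cite{NS11} for isometric affine actions on $L^p$-spaces with the defining property of $a$-$L^p$-menability of $G$. The overall strategy parallels the classical argument that a group with Serre's property (FH) has only finite image in any countable discrete $a$-$T$-menable group: one composes a putative morphism with a proper affine action, produces a fixed point, and reads off finiteness from properness.

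First I would fix an arbitrary morphism $\phi_\alpha:\Gamma_\alpha\to G$. Since $G$ is $a$-$L^p$-menable, it admits a proper isometric affine action $\pi:G\to\Isom(X)$ on some $L^p$-space $X$. Composition gives an isometric affine action $\pi\circ\phi_\alpha$ of $\Gamma_\alpha$ on $X$.

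Next, under the standing hypotheses of this case, namely that $\{\Omega_n\}$ is a $p$-expander with respect to $L^p$-space ($1<p<\infty$) and that $\mathrm{girth}(\Omega_n)\ge C\log|\Omega_n|$, the Naor--Silberman theorem \cite{NS11} asserts that for almost every labelling $\alpha$ the group $\Gamma_\alpha$ satisfies the $L^p$-analogue of Serre's property (FH): every isometric affine action of $\Gamma_\alpha$ on any $L^p$-space has a global fixed point. Applied to $\pi\circ\phi_\alpha$, this produces a point $x_0\in X$ with $\phi_\alpha(\Gamma_\alpha)\subseteq \mathrm{Stab}_G(x_0)$. Properness of $\pi$ together with $G$ being countable and discrete forces $\mathrm{Stab}_G(x_0)$ to be finite, whence $\phi_\alpha(\Gamma_\alpha)$ is finite, as required.

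The main obstacle is verifying the applicability of the Naor--Silberman theorem at the level of generality needed: the fixed-point statement must cover the specific $L^p$-space $X$ furnished by $a$-$L^p$-menability, and the exponent $p$ in the target must be the same as the one used when assuming $\{\Omega_n\}$ is a $p$-expander. Once this alignment is in place, everything else in the proof is a formal two-line chase. One could equivalently phrase the conclusion by saying that $\pi\circ\phi_\alpha$ factors through a finite quotient of $\Gamma_\alpha$, from which finiteness of the image of $\phi_\alpha$ follows.
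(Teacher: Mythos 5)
Your proposal is correct and takes essentially the same approach as the paper: the paper likewise deduces the result from the Naor--Silberman fixed-point theorem (Theorem \ref{fixlp}), applied under the standing assumption (Remark \ref{idealseq}) that the combinatorial expanders $\{\Omega_n\}$ have logarithmic girth and are $p$-expanders with respect to $L^p$ for every $p\in(1,\infty)$, which resolves the ``alignment'' issue exactly as you anticipate. The only cosmetic difference is that the paper packages your composition-plus-finite-stabilizer chase into cited abstract facts --- that Property $F_{L^p}$ passes to quotients (Proposition 2.15 in \cite{BFGM07}), that it is inherited by subgroups of a-$L^p$-menable groups, and that a group enjoying both properties is finite.
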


\begin{cor}\label{hsa-lp-menability}
$\Gamma_\alpha$ has hereditary super-rigidity with respect to any  a-$FL^p$-menable group for almost every $\alpha\in \mathcal{A}(\Omega, T^j)$, where $1< p<\infty$.

\end{cor}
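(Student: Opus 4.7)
The plan is to deduce the hereditary version from Theorem~\ref{a-lp-menability} by inducing affine isometric actions from subgroups of $\Gamma_\alpha$ up to $\Gamma_\alpha$ itself, and then invoking the $L^p$-fixed-point property of $\Gamma_\alpha$ that underpins that theorem. The strengthening from a-$L^p$-menable to a-$FL^p$-menable in the statement reflects exactly the closure of the class $FL^p$ of Banach spaces under the $\ell^p$-induction construction used below; the class of arbitrary $L^p$-spaces need not be closed under this operation, which is the technical reason for putting the $F$ in the statement.

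Unpacking the definition of hereditary super-rigidity, it suffices to show that for almost every $\alpha$, every morphism $\phi:H\to G$ from a subgroup $H\leq \Gamma_\alpha$ into an a-$FL^p$-menable group $G$ has finite image. Assume for contradiction that $\phi(H)$ is infinite. Since $G$ admits a proper affine isometric action on some $FL^p$-space $E$, pulling back by $\phi$ yields an affine isometric action of $H$ on $E$ whose orbits are unbounded, by properness of the $G$-action together with the assumption that $\phi(H)$ is infinite.

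Next form the $\ell^p$-induced Banach space $\tilde{E}=\ell^p(\Gamma_\alpha/H;E)$ equipped with the standard induced affine isometric $\Gamma_\alpha$-action. The defining closure property of the class $FL^p$ ensures that $\tilde{E}$ is again an $FL^p$-space. A routine calculation with the induction formula shows that the $\Gamma_\alpha$-orbits on $\tilde{E}$ are unbounded whenever the $H$-orbits on $E$ are, so the induced action of $\Gamma_\alpha$ has no fixed point.

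Finally, the proof of Theorem~\ref{a-lp-menability} in fact establishes the stronger fact that for almost every $\alpha$, every affine isometric action of $\Gamma_\alpha$ on an $FL^p$-space admits a fixed point (this fixed-point property is what forces the image to be finite in the theorem). Applying this to the induced action on $\tilde{E}$ produces the desired contradiction. The main obstacle is the structural step of verifying that $\ell^p$-induction preserves membership in $FL^p$, which is precisely the closure property built into the definition and is the reason the corollary must be stated in $FL^p$-form rather than $L^p$-form. A secondary subtlety is the case of infinite index $[\Gamma_\alpha:H]$, where one uses the $\ell^p$-integrable formulation of induction and must check that unboundedness of orbits persists in the passage to the infinite-sum space; this reduces to noting that displacement in the induced norm dominates displacement in any single coordinate.
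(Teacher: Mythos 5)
Your finite-index argument is essentially correct and parallels the paper's proof. The paper combines two ingredients: the general fact, extracted from the proof of Theorem \ref{a-lp-menability}, that any homomorphism from a group with Property $F_{L^p}$ to a group admitting a proper affine isometric action on an $L^p$-space has finite image; and the fact that Property $F_{L^p}$ passes to finite-index subgroups, for which it cites Proposition 8.8 of \cite{BFGM07}. Your $\ell^p$-induction from a finite-index subgroup $H\leq\Gamma_\alpha$ is in effect an inline proof of that cited proposition: for $[\Gamma_\alpha:H]<\infty$ the induced space is a finite $\ell^p$-sum of copies of $E$, hence again an $L^p$-space; a $\Gamma_\alpha$-fixed point of the induced action is a constant equivariant section whose value is $H$-fixed; and Theorem \ref{fixlp} (Property $F_{L^p}$ of $\Gamma_\alpha$) then gives the contradiction.

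Two parts of your proposal are genuinely wrong, however. First, hereditary super-rigidity as defined in the paper concerns only \emph{finite-index} subgroups, not arbitrary ones; your unpacking of the definition (and your closing ``secondary subtlety'' about infinite index) therefore aims at a stronger statement, and your method fails exactly there: affine isometric actions cannot in general be $\ell^p$-induced from infinite-index subgroups, because the induced cocycle need not be $p$-summable over $\Gamma_\alpha/H$, so the induced action is not defined on the proposed space. No repair is possible, since $F_{L^p}$ does not pass to infinite-index subgroups ($SL_3(\mathbb{Z})$ has $F_{L^p}$ for all $1<p<\infty$ yet contains free subgroups, which act properly on $L^p$); likewise any infinite-order element of $\Gamma_\alpha$ would generate a copy of $\mathbb{Z}$, which is a-$FL^p$-menable, so the all-subgroups version of the corollary is not a statement one should attempt. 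Second, your framing of ``$FL^p$'' as a class of Banach spaces closed under $\ell^p$-induction, introduced because ``the class of arbitrary $L^p$-spaces need not be closed under this operation,'' is invented and mathematically false: by Definition \ref{defnalpmenable}, a-$F_{L^p}$-menability simply means admitting a proper affine isometric action on an $L^p$-space, and $L^p$-spaces \emph{are} closed under $\ell^p$-direct sums, since $\ell^p\bigl(I;L^p(\mu)\bigr)\cong L^p\bigl(\sqcup_{I}\,\mu\bigr)$; this closure is precisely what keeps your finite-index induction inside the class covered by Theorem \ref{fixlp}. Once the argument is restricted to finite-index subgroups and this terminological confusion is removed, your proof is valid and is the same in substance as the paper's, with the citation of \cite{BFGM07} replaced by a direct induction argument.
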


\begin{thm}\label{K-amenable}
Let $G$ be a K-amenable group. For every $j\geq 1$ and for any morphism $\Gamma_\alpha\rightarrow G$, $\Gamma_\alpha$ has finite image in $G$ for almost every $\alpha\in \mathcal{A}(\Omega, T^j)$.
\end{thm}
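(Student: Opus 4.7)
The argument parallels that of Theorem \ref{a-lp-menability} and rests on the well-known incompatibility between K-amenability and Kazhdan's Property (T) for infinite discrete groups.

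First, under the expander hypothesis on $\{\Omega_n\}$ imposed in this paper (i.e.\ the same setting used for Property~(T) of the monster), a theorem of Silberman extending Gromov's construction yields that $\Gamma_\alpha$ has Property (T) for almost every $\alpha\in\mathcal{A}(\Omega, T^j)$. Set $H:=\phi_\alpha(\Gamma_\alpha)\leq G$; as a quotient of $\Gamma_\alpha$, the image $H$ also has Property (T).

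Next, invoke Cuntz's theorem that K-amenability is preserved under passage to subgroups in the discrete setting. Since $H\leq G$ and $G$ is K-amenable, $H$ is itself K-amenable.

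Finally, assume towards a contradiction that $H$ is infinite. K-amenability states that the canonical surjection $\lambda\colon C^*(H)\to C^*_r(H)$ is a $KK$-equivalence, so that $\lambda_*\colon K_0(C^*(H))\to K_0(C^*_r(H))$ is an isomorphism. Property (T) produces a Kazhdan projection $p_T\in C^*(H)$ whose class is non-zero in $K_0(C^*(H))$ (the trivial representation splits off $C^*(H)$ as a $\C$-summand). Since $H$ is infinite and has Property~(T), $H$ is non-amenable; the trivial representation is therefore not weakly contained in the regular representation, so $\lambda(p_T)=0$ in $C^*_r(H)$, hence $\lambda_*([p_T])=0$, contradicting $\lambda_*$ being an isomorphism. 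Thus $H$ must be finite, giving the theorem.

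The main obstacle is this last $C^*$-algebraic step: pinning down precisely that the Kazhdan projection dies in $C^*_r(H)$ while defining a non-trivial $K_0$-class in $C^*(H)$, so that K-amenability clashes with Property (T) whenever $H$ is infinite. This is a standard fact in noncommutative geometry (traceable to Cuntz and Valette), after which the full-measure conclusion over $\alpha\in\mathcal{A}(\Omega, T^j)$ is obtained exactly as in the preceding theorems of the paper.
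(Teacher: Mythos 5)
Your proposal is correct and follows essentially the same route as the paper: Property (T) for $\Gamma_\alpha$ (Theorem \ref{propt}) passes to the image, K-amenability passes to the subgroup $\phi_\alpha(\Gamma_\alpha)$ by Cuntz's theorem (Theorem \ref{kamensub}), and the two properties force finiteness. The only difference is that where the paper cites this last incompatibility as a black box (Theorem \ref{kamen+t}, Cuntz/Julg--Valette), you unpack its standard proof via the Kazhdan projection killed by $\lambda$ in $K_0$, which is precisely the content of the cited result.
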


\begin{cor}\label{hsK-amenable}
$\Gamma_\alpha$ has hereditary super-rigidity with respect to any  K-amenable group for almost every $\alpha\in \mathcal{A}(\Omega, T^j)$. 
\end{cor}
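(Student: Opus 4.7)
The plan is to deduce Corollary~\ref{hsK-amenable} directly from Theorem~\ref{K-amenable}, using the fact that K-amenability is stable under taking subgroups. First I would unpack the definition of hereditary super-rigidity: $\Gamma_\alpha$ having hereditary super-rigidity with respect to $G$ means that for every subgroup $H \leq G$, every morphism $\Gamma_\alpha \to H$ has finite image for almost every $\alpha \in \mathcal{A}(\Omega, T^j)$. So the task reduces to checking that each such $H$ falls within the scope of Theorem~\ref{K-amenable}.

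The key input I would invoke is the classical theorem (due to Cuntz, with later contributions by Julg--Valette) that K-amenability is inherited by arbitrary subgroups. Consequently, whenever $G$ is K-amenable and $H \leq G$ is any subgroup, $H$ is itself K-amenable, and Theorem~\ref{K-amenable} applied to $H$ in place of $G$ yields that every morphism $\Gamma_\alpha \to H$ has finite image for almost every $\alpha \in \mathcal{A}(\Omega, T^j)$. This is exactly the conclusion sought.

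The only point requiring attention is the uniformity of the exceptional null set as $H$ ranges over all subgroups of $G$. This I would handle by the following observation: any morphism $\phi_\alpha : \Gamma_\alpha \to H$, post-composed with the inclusion $H \hookrightarrow G$, gives a morphism $\Gamma_\alpha \to G$ with the same image, which is finite in $H$ if and only if it is finite in $G$. Thus the single almost-sure set produced by Theorem~\ref{K-amenable} applied to $G$ itself serves simultaneously for all subgroups $H \leq G$, so no countable (or uncountable) union over subgroups is needed. The main obstacle therefore lies entirely in Theorem~\ref{K-amenable}; once that is granted, Corollary~\ref{hsK-amenable} is essentially a tautological strengthening obtained by passing to the subgroup-closed class of K-amenable groups.
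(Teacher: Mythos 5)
There is a genuine gap: you have proved a different statement from the one the paper asserts, because you have misread the definition of hereditary super-rigidity. In this paper the heredity is on the \emph{source} side, not the target side: $\Gamma_\alpha$ has hereditary super-rigidity with respect to $G$ if every \emph{finite index subgroup} $\Gamma'_\alpha \leq \Gamma_\alpha$ has super-rigidity with respect to $G$, i.e.\ every morphism $\Gamma'_\alpha \to G$ has finite image for a.e.\ $\alpha$. Your unpacking --- ``for every subgroup $H \leq G$, every morphism $\Gamma_\alpha \to H$ has finite image'' --- is instead the statement that super-rigidity passes to subgroups of the target, which the paper explicitly dismisses as easy at the start of the subsection introducing hereditary super-rigidity (``It is easy to see that if $\Gamma_\alpha$ is super-rigid with respect to $G$ and $H\leq G$, then $\Gamma_\alpha$ is also super-rigid with respect to $H$''), via exactly the inclusion-composition observation you make in your last paragraph. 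So your argument, while internally sound, establishes only this triviality and never touches the actual content of Corollary~\ref{hsK-amenable}.

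What the corollary actually requires, and what the paper's proof supplies, is control over morphisms whose domain is a finite index subgroup $\Gamma'_\alpha$ of $\Gamma_\alpha$ --- and Theorem~\ref{K-amenable} says nothing about those, since $\Gamma'_\alpha$ is not one of the random groups $\Gamma_\beta$. The paper's route: extract from the proof of Theorem~\ref{K-amenable} the general fact that any homomorphism from a Property~(T) group to a K-amenable group has finite image (image has (T) as a quotient, is K-amenable as a subgroup by Theorem~\ref{kamensub}, and is therefore finite by Theorem~\ref{kamen+t}); then invoke the nontrivial input that Property~(T) is inherited by finite index subgroups (Proposition 2.5.7 of \cite{BHV08}), so that $\Gamma'_\alpha$ has Property~(T) for a.e.\ $\alpha$ whenever $\Gamma_\alpha$ does (Theorem~\ref{propt}). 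Stability of Property~(T) under passage to finite index subgroups is the one genuinely substantive step, and it is entirely absent from your proposal; subgroup-closedness of K-amenability, which you treat as the key input, plays only the same auxiliary role in the paper's argument that it already played in Theorem~\ref{K-amenable} itself.
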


\subsection{Organization} In Section 2, we define the required concepts and notations and introduce various classes of groups. In this section, we also state the results which will be useful for proving our main theorems and corollaries.  In Section 3, we demonstrate the proofs of  our main theorems and their corollaries. In Section 4, we discuss some open questions.

\section{Some definitions, notations and useful results}

\subsection{Gromov hyperbolic space and Gromov hyperbolic group}

\begin{defn}
We say a geodesic metric space $X$ is \textit{Gromov hyperbolic}, or \textit{$\delta$-hyperbolic}, if there is a number $\delta \geq 0$ for which every geodesic triangle in $X$ satisfies the $\delta$-slim triangle condition, i.e. any side is contained in a $\delta$-neighbourhood of the other two sides.
\end{defn}

\begin{rem}
 Throughout this paper we will assume that the space $X$ is separable, proper and locally compact.
 \end{rem}
 
 \begin{defn}
 A finitely generated group $G$ is said to be \textit{Gromov hyperbolic} or \textit{hyperbolic} if its Cayley graph with respect to a finite generating set is $\delta$-hyperbolic for some $\delta\geq 0$. 
\end{defn}

\begin{rem}
A group $G$ is Gromov hyperbolic if and only if $G$ has an isometric, properly discontinuous and co-compact action on a Gromov-hyperbolic space $X$.  
\end{rem}

\subsection{Elementary and non-elementary action}
Let $G$ be a group acting isometrically on a hyperbolic metric space $X$. By $\Lambda(G)$ we denote the set of limit points of $G$ on $\partial X$, the Gromov boundary of $X$. That is, $\Lambda(G)$ is the set of accumulation points of any orbit of $G$ on $\partial X$. The possible actions of (non-cyclic) groups on hyperbolic spaces break into the following 4 classes according to $|\Lambda(G)|$ (see \cite{Gro87}, Section 8.2) :

\begin{itemize}

\item[(1)] $|\Lambda(G)|= 0$. Equivalently, $G$ has bounded orbits. In this case the action of $G$ is called \textit{elliptic}. 
 \item[(2)] $|\Lambda(G)|= 1$. Equivalently, $G$ has unbounded orbits and contains no loxodromic elements. In this case the action of $G$ is called \textit{parabolic}.
\item[(3)] $|\Lambda(G)| = 2$. Equivalently, $G$ contains a loxodromic element and any two loxodromic elements have the same limit points on $\partial X$. In this case the action of $G$ is called \textit{lineal}.
\item[(4)] $|\Lambda(G)|=\infty$. Then $G$ always contains loxodromic elements. 
In turn, this case breaks into two subcases:
\begin{itemize}
\item[(a)] $G$ fixes a point $\xi\in \partial X$. In this case $\xi$ is the common limit point of all loxodromic elements of $G$. Such an action is called \textit{quasi-parabolic}.
\item[(b)] $G$ has no fixed point in $\partial X$. $G$ contains infinitely many independent loxodromic elements. In this case the action is said to be of general type. 
\end{itemize}
The action of $G$ is called \textit{elementary} in cases (1)-(3) and \textit{non-elementary} in case
(4).
\end{itemize}

\subsection{Expander graphs and dg-bounded graphs}

Expander graph is an important object in graph theory. It will play a crucial role in our context as well. More precisely, Gromov's random monster group based on an expander sequence of graphs have Property (T). Before going into the definition of expander graph, we define Cheeger constant of a graph. 

\begin{defn}
Given a finite connected graph $\Omega$ with $|\Omega|$ vertices and a subset $A\subseteq\Omega$, denote by $\partial A$ the set of edges between $A$ and $\Omega\setminus A$. The \textit{Cheeger constant} of $\Omega$ is defined as 

\begin{align*} 
h(\Omega)=\min_{1\leq| A|\leq|\Omega|/2}  \frac{|\partial A|}{| A|}.
\end{align*}

Now, we define a expander graph. 

\begin{defn}
An \textit{expander} is a sequence of $\{\Omega_n\}_{n\in\mathbb{N}}$ of finite connected graphs with uniformly nounded degree, $|\Omega_n|\rightarrow\infty$ as $n\rightarrow\infty$, and $h(\Omega_n)\geq c$ uniformly over $n\in\mathbb{N}$ for some constant $c>0$.
\end{defn}

The concept of (combinatorial) expander can be generalized as $p$-expander with respect to a metric space as follows: 

\begin{defn}
We say that a sequence of $d$-regular finite connected graphs $\{\Omega_n=(V_n,E_n)\}_{n=1}^\infty$ is a $p$-expander with respect to the metric space $(Y,d_Y)$. if $| V_n|\rightarrow\infty$ as $n\rightarrow\infty$ and for every $f: V_n\rightarrow\infty$ we have

$$\frac{1}{| V_n|^2} \sum_{\substack{u,v\in V_n}} d_Y\big(f(u),f(v)\big)^p\leq \frac{C}{| E_n|}\sum_{\substack{\overline{uv}\in E_n}} d_Y(f(u),f(v))^p,$$
where $\overline{uv}$ denotes an edge between the vertices $u$ and $v$. 
\end{defn}

\begin{rem}
\begin{itemize}

\item[(1)] When $Y=\mathbb{R}$ and $p=2$, the above inequality is equivalent to the usual notion of (combinatorial) expander. 

\item[(2)] Every (combinatorial) expander satisfies the above inequality with $Y=l^p$ and $1< p<\infty$ (see Proposition 3 in \cite{Mat97} ).

\item[(3)] In general, the metric space $(Y, d_Y)$ is assumed to be $p$-uniformly convex which is defined as having the following property: there exists a constant $c>0$ such that for every $x,y,z\in Y$, every geodesic segment $\gamma:[0,1]\rightarrow Y$ with $\gamma(0)=y$, $\gamma(1)=z$, and every $t\in [0,1]$
we have: 
$$d_Y\big(x,\gamma(t)\big)^p\leq (1-t)d_Y(x,y)^p+t d_Y(x,z)^p-ct(1-t)d_Y(y,z)^p.$$
This include $l^p$ spaces for $1< p<\infty$.
\end{itemize}
\end{rem}

Now, we introduce the concept of  large girth, logarithmic girth and dg-bounded graphs which are important in Theorem \ref{nonelmhyp}, Theorem \ref{propt} and Theorem \ref{fixlp}. Before going into these definitions, we define girth and diameter of a graph. 

\begin{defn}
The \textit{girth} of a graph is the edge-length of its shortest non-trivial cycle. The \textit{diameter} of a graph is the greatest edge-length distance between any pair of vertices. 
\end{defn}

\begin{defn}
A sequence of graphs $\{\Omega_n\}_{n\in\mathbb{N}}$  is \textit{large girth} if $girth(\Omega_n)\rightarrow\infty$ as $n\rightarrow\infty$ and is \textit{logarithmic girth} if there exists a constant $C >0$ such that for all $n\in\mathbb{N}$
$$girth(\Omega_n)\geq C\hspace*{1mm} log(|\Omega_n|).$$
\end{defn}

\begin{defn}
A sequence of graphs $\{\Omega_n\}_{n\in\mathbb{N}}$  is \textit{dg-bounded} if there exists a constant $D>0$ such that for all $n\in\mathbb{N}$
$$\frac{diam(\Omega_n)}{girth(\Omega_n)}\leq D. $$
\end{defn}
\end{defn}

\begin{rem}
\begin{itemize}
\item[(1)] It is easy to see that logarithmic girth graphs are of large girth.
\item[(2)] If $\{\Omega_n\}_{n=1}^\infty$ has uniformly bounded degree and dg-bounded, then it is of large girth. 
For the proof of this fact, we refer the readers to \cite{AT19} (page no. 5).
\end{itemize}
\end{rem}

\begin{rem}\label{idealseq}

In our main theorems, the sequence of finite connected graphs $\{\Omega_n\}_{n=1}^\infty$ should have the following properties: 

\begin{itemize}
\item[(a)] the maximum degree of $\Omega_n$ is less than equal to $d$;
\item[(b)]$\{\Omega_n\}_{n=1}^\infty$ is dg-bounded; 
\item[(c)] $\{\Omega_n\}_{n=1}^\infty$ is of large girth or 
\item[(c')] $\{\Omega_n\}_{n=1}^\infty$ is of logarithmic girth (which implies (c));
\item[(d)] $\{\Omega_n\}_{n=1}^\infty$ is a sequence of expander graphs (which implies that $\{\Omega_n\}_{n=1}^\infty$ is a $p$-expander with respect to the $p$-uniformly convex space $l^p$ for all $p\in (1,\infty)$). 
\end{itemize}
\end{rem}

\begin{ex} 
We describe two constructions of a sequence of graphs with the properties described in Remark \ref{idealseq}.

\begin{itemize}

\item[(1)]
 The first construction is due to Margulis (see \cite{Mar82}). We consider the Cayley graphs $ \{\Omega_p=Cay\big(SL_2(\mathbb{Z}/p\mathbb{Z}),\{A_p,B_p\}\big)\}$, where  
 $p$ runs over all odd primes and $A_p$ and $B_p$ are the following two matrices, respectively:
 \[
\begin{bmatrix}
    \bar{1}  &  \bar{2}      \\
    \bar{0}  &  \bar{1}      
\end{bmatrix}
, 
\begin{bmatrix}
    \bar{1}  & \bar{0}      \\
    \bar{2}  &  \bar{1}      
\end{bmatrix} 
\]
It is a sequence of expander graphs. It satisfies $girth (\Omega_p)\geq C log|\Omega_p|$ for all $p$ and for $C > 0$ (independent of $p$). Therefore $girth(\Gamma_p)\rightarrow\infty$ as $p\rightarrow\infty$ and it is dg-bounded  by Selberg's theorem (see \cite{Sel65}) .

\item[(2)] 
The second construction is the famous Ramanujan graphs, constructed by Lubotzky-Phillips-Sarnak in \cite{LPS88}. The Cayley graph $X^{p,q}$ of the projective general linear group $PGL_2(\mathbb{F}_q)$ over the field of $q$ elements for a particular set of $(p + 1)$ generators, where $p$ and $q$ are distinct primes congruent to $1$ modulo $4$ with the Legendre symbol $(\frac{p}{q})=-1$, satisfies the following properties:
\begin{itemize}
\item[(1)] $X^{p,q}$ is $(p+1)$-regular on $N=q(q^2-1)$ vertices;
\item[(2)] $girth(X^{p,q})\geq 4\hspace*{1mm} log_pq-log_p4$;
\item[(3)] $\{X^{p,q}\}_{q=1}^\infty$ is a family of Ramanujan graphs, in particular, it is a sequence of expander graphs;
\end{itemize}

For a fixed prime $p$ and each $q$ as above, we set $\Omega_q=X^{p,q}$. Then $\{\Omega_q\}_q$ is an expander and $girth(\Omega_q)\rightarrow\infty$ as $q\rightarrow\infty$. The expander mixing lemma ensures that $\Omega_q$ is of diameter $O(logN)$. Thus, $\{\Omega\}_q$ is a dg-bounded expander. 

\end{itemize}

\end{ex}

\subsection{Gromov's random monster group}

Denote by $F_k$ the free group on the symmetric set of generators $T$ of size $2k$, and let $\Omega$ be a graph. A symmetric $F_k$-labelling $\alpha$ of $\Omega$ is a map
from the edge set of $\Omega$ to $F_k$ so that $\alpha(e^{-1})={\alpha(e)}^{-1}$, for every edge $e$. For $j\in \mathbb{N}$, we call $\mathcal{A}(\Omega, T^j)$ the set of symmetric $F_k$-labelings with values in $T^j$. Consider $\Omega$ a disjoint union of finite connected graphs $\Omega_n$, i.e. $\Omega= \sqcup_{n\in\mathbb{N}} \Omega_n$, and endow $\mathcal{A}(\Omega, T^j)$ with the product distribution coming from the uniform distributions on $\mathcal{A}(\Omega_n, T^j)$. For $\alpha\in \mathcal{A}(\Omega, T^j)$, define $\Gamma_\alpha$ to be the quotient of $F_k$ by all the words labelling closed paths in $\Omega$. Throughout the article, by $\{\Omega_n\}_{n=1}^\infty$ we will assume the sequence mentioned in Remark \ref{idealseq}, if it is not explicitly mentioned. We denote Gromov's random monster group by the symbols $\{\Gamma_\alpha\}_{\alpha\in \mathcal{A}(\Omega, T^j)}$ or $\{\Gamma_\alpha\}$ or $\Gamma(\Omega, T^j)$ depending on the contexts.

Now, we state one of the most important ingredients of Theorem \ref{mcg}, Theorem \ref{outfn}, Theorem \ref{acylin},  and Theorem \ref{hhg} . 

\begin{thm}\label{nonelmhyp} (Theorem 1 in \cite{GST20})
 Let $\Omega_n$ be a sequence of connected finite graphs, of vertex-degree between 3 and $d$ for some fixed $d \geq 3$. Assume, $|\Omega_n|\rightarrow\infty$ and that there exists $C>0$ so that $diam(\Omega_n)\leq  C girth(\Omega_n)$
for all $n\in\mathbb{N}$.
Then, for every $j\geq 1$ and almost every $\alpha\in \mathcal{A}(\Omega, T^j)$, we have that $\Gamma_\alpha$ cannot act non-elementarily on any geodesic Gromov hyperbolic space.
\end{thm}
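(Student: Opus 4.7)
The plan is to argue by contradiction. Suppose that on a set of positive measure, $\Gamma_\alpha$ admits an isometric non-elementary action on some geodesic $\delta$-hyperbolic space $X_\alpha$. The first step is to set up the basic geometric picture: fix a basepoint $x_0 \in X_\alpha$ and consider the orbit map $\varphi_\alpha \colon \Gamma_\alpha \to X_\alpha$, $g \mapsto g \cdot x_0$, which is $L$-Lipschitz for $L = \max_{t \in T} d(x_0, t \cdot x_0)$. The labeling $\alpha$ induces a natural map $f_{n,\alpha} \colon \Omega_n \to \Gamma_\alpha$, well-defined once a basevertex $v_0 \in \Omega_n$ is chosen (sending $v$ to the element read off any path from $v_0$ to $v$), because closed paths in $\Omega_n$ label the identity of $\Gamma_\alpha$ by construction. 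Composing yields a $jL$-Lipschitz map $\widetilde{f}_{n,\alpha} \colon \Omega_n \to X_\alpha$, whose image has diameter at most $jLC \cdot \mathrm{girth}(\Omega_n)$ by the dg-bounded hypothesis.

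Next I would extract a quantitative drift statement from the non-elementarity. A standard Schottky/ping-pong argument in $\delta$-hyperbolic geometry yields two independent loxodromic elements whose sufficiently high powers generate a quasi-convexly embedded free subgroup of rank two. More strongly, Kaimanovich and Maher--Tiozzo type theorems for random walks driven by the uniform measure $\mu$ on $T^j$ give positive drift: for the random word $w_m = x_1 \cdots x_m$ with $x_i$ i.i.d.\ $\sim \mu$, one has $d(x_0, \varphi_\alpha(w_m) \cdot x_0)/m \to c_\alpha > 0$ almost surely, with strong exponential tail estimates for deviations below $c_\alpha / 2$.

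Here is where the contradiction should materialize. The label read along a cycle in $\Omega_n$ of length $\ell \geq \mathrm{girth}(\Omega_n)$ is, by construction of $\alpha$, a concatenation of $\ell$ i.i.d.\ uniform elements of $T^j$, and by definition represents the identity in $\Gamma_\alpha$, hence has zero orbit displacement. The drift statement, applied to such a random walk of length $\ell \to \infty$, predicts displacement of order $c_\alpha \cdot \ell$ with high probability, yielding a direct contradiction. Combined with independence of $\alpha$ across distinct $\Omega_n$ and the assumption $|\Omega_n| \to \infty$ (producing many independent trials), a Borel--Cantelli argument should confine the bad labelings to a null set.

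The hard part will be uniformity over $\alpha$: the target $X_\alpha$, the basepoint $x_0$, the loxodromic witnesses, and the drift constant $c_\alpha$ all depend on $\alpha$, so the probabilistic argument must handle an uncountable family of actions at once, and $c_\alpha$ could in principle be arbitrarily small. The natural way around this is to replace the abstract non-elementary action by a universal combinatorial witness---for instance, a small-cancellation property of the relator set (the labels of closed cycles of $\Omega$) that is incompatible with any non-elementary hyperbolic action---and to show that a random $\alpha$ almost surely satisfies that property. Establishing this incompatibility in a form robust enough to rule out all geodesic hyperbolic targets simultaneously is where the bulk of the technical work lies, and is essentially the route taken in Gruber--Sisto--Tessera.
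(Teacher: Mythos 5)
First, a point of order: the paper you are working from does not prove this statement at all; it is imported verbatim as Theorem~1 of \cite{GST20}, so the only proof to compare against is Gruber--Sisto--Tessera's. Your opening moves do capture the real engine of that proof: the labels along an embedded cycle of $\Omega_n$ are i.i.d.\ uniform on $T^j$, such a label is a relator and hence acts trivially, while linear progress of the induced random walk under a non-elementary action would force displacement of order $\mathrm{girth}(\Omega_n)\to\infty$; for any \emph{fixed} action this tension plus independence across the $\Omega_n$ and Borel--Cantelli does yield a contradiction. You also correctly and honestly isolate the crux: the action, the space, the basepoint and the drift constant all depend on $\alpha$.

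The problem is that this crux is not a technical loose end to be deferred --- it \emph{is} the theorem --- and the repair you propose points in exactly the wrong direction. Because the action $\rho_\alpha$ is chosen adversarially after the labels are revealed, no fixed-action deviation estimate can be summed over an uncountable family of actions, and the normalized drift admits no uniform lower bound over non-elementary actions, so the union bound you would need does not exist. That this obstruction is real and not an artifact is shown by what happens when the hypotheses you never actually use (vertex degree at least $3$ and $\mathrm{diam}(\Omega_n)\leq C\,\mathrm{girth}(\Omega_n)$) are dropped: for sparse graph sequences (say, single cycles of rapidly growing length) the i.i.d.\ relators satisfy classical or graphical $C'(1/6)$ small cancellation with positive probability, and infinitely presented graphical small cancellation groups are \emph{acylindrically hyperbolic} by a theorem of Gruber and Sisto --- that is, the action can adapt to the random relators and the conclusion fails. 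In particular, your proposed ``universal combinatorial witness,'' a small-cancellation property of the relator set incompatible with non-elementary actions, cannot exist: small cancellation of the relator set \emph{produces} non-elementary actions rather than precluding them. What \cite{GST20} actually exploits is the opposite of small cancellation: the degree and diameter-versus-girth hypotheses force many cycles of comparable length sharing long subpaths, i.e.\ heavy correlations among relators, and it is this structure (absent from your sketch, where dg-boundedness is used only for an image-diameter bound that plays no role in the contradiction) that defeats the adversarial choice of action. A further gap, even for a fixed action: Maher--Tiozzo-type positive drift requires two independent loxodromics in the semigroup generated by the support of the measure, so it covers general-type actions only; quasi-parabolic actions are non-elementary in the sense of the statement, yet a symmetric measure can have zero drift for such an action (e.g.\ the $ax+b$ group acting on $\mathbf{H}^2$ with centered dilation part, where displacement grows like $\sqrt{n}$), so this case needs a separate argument that your sketch does not contain.
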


It is due to Gromov that the random monster group has Kazhdan's Property (T), which is an important fact for proving Theorem \ref{mcg}, Theorem \ref{outfn}, Theorem \ref{acylin},  Theorem \ref{hhg} and Theorem \ref{K-amenable}. 

\begin{thm}\label{propt}( Theorem 1.2.A in \cite{Gro03},Theorem 2.17 and Corollary 2.19 in \cite{Sil03})
 Let $\{\Omega_n\}_{n=1}^\infty$ be a sequence of connected finite graphs, of vertex-degree between 3 and $d$ for some fixed $d \geq 3$ satisfying  $girth(\Omega_n)\rightarrow\infty$ as $n\rightarrow\infty$ and $\{\Omega_n\}_{n=1}^\infty$ is a sequence of expander graphs. Then $\Gamma_\alpha$ has Property (T) for almost every $\alpha\in \mathcal{A}(\Omega, T^j)$. 
\end{thm}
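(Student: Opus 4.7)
The plan is to deduce Property (T) from a spectral criterion in the style of Garland and \.Zuk. Recall that \.Zuk's criterion asserts: if a group $G$ admits a finite symmetric generating set $S$ (not containing the identity) such that the link graph $L(S)$, whose vertices are the elements of $S$ with an edge between $s,t$ whenever $s^{-1}t\in S$, is connected with normalized Laplacian spectral gap $\lambda_1\bigl(L(S)\bigr)>1/2$, then $G$ has Kazhdan's Property (T). The strategy is to exhibit, for almost every labelling $\alpha$, a generating set of $\Gamma_\alpha$ whose link carries such a spectral gap, inherited from the expansion of $\{\Omega_n\}$.

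First, I would fix a parameter $n$ sufficiently large (to be chosen in terms of the expansion constant $c$ and $d$), pass from the given presentation to a presentation of $\Gamma_\alpha$ whose generators are the words in $T^j$ corresponding to half-edges at a basepoint $v\in\Omega_n$, and whose relators come from closed loops at $v$ of length at most two in $\Omega_n$. The geometric input is that the Cayley $2$-complex built from this presentation has, at every vertex, a link which is naturally a subdivided / relabelled copy of a neighbourhood structure of $\Omega_n$, weighted by the multiplicities induced by the random labelling. Under the large girth hypothesis, simple loops of bounded length in $\Omega_n$ are in bijection with short relators, so the link graph essentially \emph{is} $\Omega_n$ (up to a double cover corresponding to orientations), and its normalized Laplacian spectral gap is bounded below by $\lambda_1(\Omega_n)\geq c>0$.

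Next, I would argue that random relabelling by words in $T^j$ does not deteriorate this spectral gap. The key probabilistic input is that, conditional on $\alpha$ being injective on the edges of bounded combinatorial balls in $\Omega_n$, the link graph inherits exactly the adjacency structure of $\Omega_n$. For $j$ chosen large, a union bound together with the fact that the number of pairs of edges in a ball of fixed radius is polynomial in $|\Omega_n|$ shows that injectivity holds with probability tending to $1$; hence $\lambda_1(\mathrm{link})\geq c$ almost surely along a subsequence. Now either one takes $\Omega_n$ with large enough $\lambda_1(\Omega_n)$ to push the link spectral gap above $1/2$ directly, or — more robustly — one applies a version of \.Zuk's criterion adapted to weighted links (as in Silberman's thesis) where the threshold $1/2$ is replaced by a quantity depending on the geometry of the presentation, and which is met whenever $\lambda_1(\Omega_n)>1-\eta$ for a computable $\eta>0$. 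Standard amplification arguments (tensoring expanders, or taking $\Omega_n$ from a Ramanujan tower) can be invoked to make the spectral gap arbitrarily close to $1$ at the cost of increasing the degree $d$.

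The delicate step — and the main obstacle — is the second one: transferring the spectral gap from the abstract expander $\Omega_n$ to the \emph{weighted} link graph produced by the random labelling, uniformly over $n$. A naive bound would lose a factor depending on the maximum multiplicity of labels appearing at a vertex, and the task is to control this multiplicity with high probability. Here one exploits both the large girth (to decouple the labels on disjoint short loops, yielding an essentially independent labelling on a controlled scale) and the expansion (to convert local spectral information into the required global estimate). Once $\lambda_1>1/2$ is secured for one labelled $\Omega_n$, Property (T) for the entire quotient $\Gamma_\alpha$ follows from \.Zuk's criterion because the presentation of $\Gamma_\alpha$ still contains the relations coming from that single $\Omega_n$. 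A Borel--Cantelli argument applied over $n$ then yields the conclusion for almost every $\alpha\in\mathcal{A}(\Omega,T^j)$.
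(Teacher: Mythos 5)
First, a point of comparison: the paper does not prove Theorem \ref{propt} at all --- it is imported verbatim from Theorem 1.2.A of \cite{Gro03} and Theorem 2.17/Corollary 2.19 of \cite{Sil03} --- so your proposal has to be measured against those arguments, and it takes a different route that breaks down at its central step. \.Zuk-type spectral criteria apply to \emph{triangular} presentations: the link graph on a generating set $S$ is built from relators of length three. The graphical presentation of $\Gamma_\alpha$ has no short relators whatsoever: every relator is the label of a closed loop of $\Omega_n$, hence has length at least $j\cdot girth(\Omega_n)\rightarrow\infty$. In particular the auxiliary presentation you propose (generators the edge-labels near a basepoint, relators the closed loops of length at most two) presents a \emph{free} group, precisely because the large-girth hypothesis kills all short loops; it does not present $\Gamma_\alpha$, and the subsequent identification ``link graph $\cong\Omega_n$ up to a double cover'' has no basis. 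Links of vertices in the Cayley $2$-complex of a graphical presentation with long relators are governed by corners of long $2$-cells and bear no relation to the adjacency structure of $\Omega_n$; and the standard fix --- subdividing long relators to obtain a triangular presentation --- produces links that are unions of long circuits, with no spectral gap. Making a Garland/\.Zuk argument work for graphical presentations with relators of unbounded length is not a known technique, and the cited proofs avoid it entirely.

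Two further steps fail quantitatively even granting the link identification. The theorem assumes only a uniform Cheeger bound $h(\Omega_n)\geq c$, whereas your route needs the normalized gap pushed above $1/2$ (or ``close to $1$''); amplification by tensoring or passing to Ramanujan towers changes the graphs $\Omega_n$, hence changes the group $\Gamma_\alpha$ in the statement. And the injectivity-on-balls event fails for fixed $j$ (the theorem quantifies over every fixed $j\geq 1$): the probability that two given edges receive the same label in $T^j$ is a constant of order $(2k)^{-j}$, and there are order $|\Omega_n|$ essentially independent balls of a fixed radius, so label collisions occur somewhere with probability tending to $1$ as $n\rightarrow\infty$ --- the union bound runs in the wrong direction. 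The actual mechanism of \cite{Gro03} and \cite{Sil03} (and of the $L^p$ generalization \cite{NS11} quoted as Theorem \ref{fixlp}) is different: given an affine isometric action of $\Gamma_\alpha$ on a Hilbert space $H$, the fact that labels of closed loops die in $\Gamma_\alpha$ makes the map $f(v)=\alpha(p_v)\cdot\xi$ (for $p_v$ any path from a basepoint to $v$) well defined on $V(\Omega_n)$; the expander Poincar\'e inequality then bounds the variance of $f$ by the average of $\|\xi-\alpha(e)\xi\|^2$ over edges, and a concentration argument over the i.i.d.\ labels, made uniform over all actions simultaneously (the technical heart of \cite{Sil03}), converts this into a fixed-point statement. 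Your two sound observations --- that it suffices to treat the relations coming from a single $\Omega_n$ because Property (T) passes to quotients, and the concluding Borel--Cantelli argument over $n$ --- are indeed how the cited proofs finish, but the spectral-gap transfer you place at the core would have to be replaced wholesale by this Poincar\'e-inequality argument.
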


Using Property (T) of $\Gamma_\alpha$, we can prove the following corollary. We will also this corollary in the proofs Theorem \ref{mcg}, Theorem \ref{outfn}, Theorem \ref{acylin},  Theorem \ref{hhg}

\begin{cor}\label{surjectZ}
Let $H_\alpha$ be a finite index subgroup of $\Gamma_\alpha$. Then $H_\alpha$ does not surject onto $\mathbb{Z}$ for a.e. $\alpha$. 
\end{cor}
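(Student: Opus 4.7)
The plan is to invoke Property (T) of $\Gamma_\alpha$ (Theorem \ref{propt}) together with two standard permanence properties of Kazhdan's Property (T): (i) it passes to finite index subgroups, and (ii) it passes to quotient groups. Once these are in hand, the conclusion will follow immediately from the failure of Property (T) for $\mathbb{Z}$.

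\medskip

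More concretely, first I would fix $\alpha\in \mathcal{A}(\Omega,T^j)$ in the full-measure set on which $\Gamma_\alpha$ has Property (T); such a set exists by Theorem \ref{propt}, since we have assumed $\{\Omega_n\}$ to be an expander sequence of large girth. Next I would let $H_\alpha\leq \Gamma_\alpha$ be any finite index subgroup and recall that a finite index subgroup of a Kazhdan group is again Kazhdan, so $H_\alpha$ inherits Property (T). Supposing toward a contradiction that there exists a surjection $\pi:H_\alpha\twoheadrightarrow \mathbb{Z}$, Property (T) would then descend to $\pi(H_\alpha)=\mathbb{Z}$. However, $\mathbb{Z}$ is infinite and amenable, and an infinite amenable group cannot have Property (T) (for example, its regular representation has almost invariant vectors without invariant vectors). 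This contradicts the conclusion of the previous step and shows that no such surjection $\pi$ exists.

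\medskip

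The argument is essentially a packaging of well-known facts, so I do not expect any genuine obstacle; the only thing to be careful about is that the null set of ``bad'' $\alpha$ is the same null set as in Theorem \ref{propt}, and the finite index subgroup $H_\alpha$ is allowed to depend on $\alpha$. Since the conclusion is derived purely from Property (T) of $\Gamma_\alpha$, both points are automatic: the same full-measure set works uniformly for all finite index subgroups $H_\alpha\leq \Gamma_\alpha$.
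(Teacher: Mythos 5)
Your proof is correct and follows exactly the route the paper intends: the corollary is stated right after Theorem \ref{propt} as a consequence of Property (T), using the standard permanence of Kazhdan's property under passage to finite index subgroups and to quotients, together with the fact that the infinite amenable group $\mathbb{Z}$ is not Kazhdan. Your closing remark about the null set being the one from Theorem \ref{propt}, uniformly over all finite index subgroups, is also the right observation and handles the only measure-theoretic subtlety.
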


Now, we state a generalization of Theorem \ref{propt} which will be useful for proving Theorem \ref{a-lp-menability}.

\begin{thm}\label{fixlp}(Theorem 1.1 in \cite{NS11})
Assume that a geodesic metric space $(Y, d_Y)$ is $p$-uniformly convex and admits a sequence of logarithmic girth $p$-expanders $\{\Omega_n = (V_n, E_n)\}_{n=1}^
\infty$. Then for all $d\geq 3$, $k\geq 2$ and $j\geq 1$,  any isometric action of $\Gamma_\alpha$ on $Y$ has a common fixed point for almost every $\alpha\in \mathcal{A}(\Omega, T^j)$.
\end{thm}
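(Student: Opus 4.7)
The strategy is to adapt Gromov's argument for Property (T) of random monster groups to the nonlinear setting of isometric actions on a $p$-uniformly convex space $Y$, following the framework of Naor--Silberman \cite{NS11}. I argue by contradiction: assume $\Gamma_\alpha$ acts on $Y$ by isometries without a common fixed point, and use the $p$-expander inequality on the labeled graphs $\Omega_n$ to force the infimal displacement to vanish. Then $p$-uniform convexity upgrades the resulting approximate fixed points to an actual fixed point.

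Given a basepoint $y_0 \in Y$, I first build for each $n$ a canonical map $f_n \colon V_n \to Y$. Fix $v_0 \in V_n$; for any other vertex $v$, choose a path from $v_0$ to $v$ in $\Omega_n$, read off the concatenated label as a word $w \in F_k$, and set $f_n(v) := w \cdot y_0$. The crucial observation is that any loop in $\Omega_n$ labels a defining relator of $\Gamma_\alpha$ and hence acts trivially on $Y$, so $f_n$ is independent of the chosen path. Consequently, for every edge $\overline{uv} \in E_n$ one has $d_Y(f_n(u), f_n(v)) = d_Y\bigl(f_n(u),\alpha(\overline{uv}) \cdot f_n(u)\bigr)$, so edge distances in the image are precisely the local displacements of the generators in $T^j$.

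Next, I apply the $p$-expander inequality for $(Y,d_Y)$ to $f_n$, yielding
$$\frac{1}{|V_n|^2}\sum_{u,v \in V_n} d_Y\bigl(f_n(u),f_n(v)\bigr)^p \leq \frac{C}{|E_n|}\sum_{\overline{uv} \in E_n} d_Y\bigl(f_n(u),\alpha(\overline{uv})\cdot f_n(u)\bigr)^p.$$
Choosing $y_0$ to nearly minimize the displacement $\lambda(y) := \max_{t\in T^j} d_Y(y,ty)$, the right-hand side should be dominated by a constant multiple of $\lambda_*^p$, where $\lambda_* := \inf_y \lambda(y)$. The left-hand side, by contrast, controls the spread of the orbit over balls of radius comparable to $\mathrm{diam}(\Omega_n)$; the logarithmic girth hypothesis guarantees that these balls contain orbit points deep enough that relators are invisible locally, yielding a lower bound of the form $c\,\lambda_*^p \cdot h(n)$ with $h(n)\to\infty$. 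Letting $n\to\infty$ forces $\lambda_* = 0$.

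The final step is to upgrade $\lambda_* = 0$ to a genuine common fixed point via iterated barycentric averaging, which is a strict contraction on a $p$-uniformly convex space and therefore admits a unique fixed point that is preserved by all of $\Gamma_\alpha$. I expect the principal obstacle to be step three: in the Hilbertian setting one uses orthogonality and Property (T) directly, but here one must use the Markov-type-$p$ structure of $Y$ to compare the displacements $d_Y(f_n(u),\alpha(\overline{uv}) f_n(u))$ at different vertices $u$ and relate them all back to $\lambda(y_0)$. Making this nonlinear comparison rigorous, rather than simply invoking equivariance, is the heart of the Naor--Silberman argument and the only delicate place where $p$-uniform convexity enters in a non-cosmetic way.
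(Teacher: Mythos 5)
First, note that the paper does not prove this statement at all: it is imported verbatim as Theorem 1.1 of Naor--Silberman \cite{NS11}, so the only meaningful comparison is with the argument in that reference. Measured against it, your sketch has two genuine gaps. The first and most serious is that your argument is essentially deterministic in $\alpha$: the probability measure on $\mathcal{A}(\Omega,T^j)$ never enters, except through the vague phrase that logarithmic girth makes ``relators invisible locally.'' But the statement is genuinely probabilistic and fails for specific labellings. For instance, one can choose labellings for which $\Gamma_\alpha$ surjects onto $\mathbb{Z}$, and then $\Gamma_\alpha$ acts on the real line (isometrically embedded in $L^p$) by translations with no fixed point; hence no labelling-by-labelling argument of the form you propose can yield the ``almost every $\alpha$'' conclusion. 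Concretely, your claimed lower bound $c\,\lambda_*^p\,h(n)$ on the left-hand side of the Poincar\'e inequality is exactly the step that cannot be established deterministically: for a fixed bad labelling the orbit map $f_n$ simply does not spread out. In \cite{NS11} this is where the randomness and the logarithmic girth hypothesis actually do their work: one proves that, with probability summably close to $1$, the random labelling transfers the graph's $p$-Poincar\'e inequality to a Poincar\'e (nonlinear spectral gap) inequality for the pushforward measure on $\Gamma_\alpha$, uniformly over all isometric actions on all such spaces $Y$; the hypothesis $\mathrm{girth}(\Omega_n)\geq C\log|V_n|$ is needed so that the entropy of the union bound over bad events is beaten by the girth-scale freeness of the random words, not to make relators ``locally invisible.''

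The second gap is the endgame. The implication ``$\lambda_*=0$ plus barycentric averaging in a $p$-uniformly convex space gives a fixed point'' is false: vanishing infimal displacement does not imply existence of a fixed point even for $2$-uniformly convex (CAT(0)) spaces --- a parabolic isometry of the hyperbolic plane generates a $\mathbb{Z}$-action with infimal displacement $0$ and no fixed point, and affine isometric actions of $\mathbb{Z}$ on Hilbert space can have almost-fixed points without fixed points. The Naor--Silberman proof never passes through the intermediate statement $\lambda_*=0$; instead, the quantitative spectral-gap inequality for the generating measure is used to show that the barycenter/averaging iteration contracts displacement at a definite geometric rate (the rate coming from the Poincar\'e constant, not from uniform convexity alone), so the iterates form a Cauchy sequence whose limit is the desired common fixed point. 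Finally, a minor point: with the left-multiplication convention your edge identity should read $d_Y\bigl(f_n(u),f_n(v)\bigr)=d_Y\bigl(y_0,\alpha(\overline{uv})\cdot y_0\bigr)$ rather than $d_Y\bigl(f_n(u),\alpha(\overline{uv})\cdot f_n(u)\bigr)$; this is harmless, but it matters once one tries to make the ``nonlinear comparison'' you correctly identify as the heart of the argument.
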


\subsection{Mapping Class Group} 

Let $S_{g,b}$ be a compact, oriented, connected surface of genus $g$ with $b$ boundary components, where $g,b\in\mathbb{Z}_{\geq 0}$. We define $Homeo^+(S_{g,b})$ as the set of orientation-preserving homeomorphisms from $S_{g,b}$ to itself fixing the boundary components set-wise. The mapping class group of a closed surface is defined as follows. 

\begin{defn}
\textit{Mapping Class Group} of $S_{g,b}$ is defined by the following group $Homeo^+(S_{g,b})/\sim$, where $f\sim g$ if $f$ and $g$ are isotopic, i.e., there is a homotopy $F: S_{g,b}\times [0,1]\rightarrow S_{g,b}$ so that $F(\cdot,0)=f$, $F(\cdot, 1)=g$ and $F(\cdot,t)$ is a homeomorphism fixing the boundary components set-wise for all $t$.
\end{defn}

Now, we introduce a very important concept in the theory of mapping class group. 

\begin{defn}
The \textit{curve complex} $C(S_{g,b})$ is an abstract simplicial complex associated to a surface $S_{g,b}$. Its 1-skeleton is given by the
following data: Vertices - There is one vertex of $C(S_{g,b})$ for each isotopy class of
essential simple closed curves in $S_{g,b}$.
Edges- There is an edge between any two vertices of $C(S_{g,b})$ corresponding to isotopy classes $a$ and $b$ with geometric intersection number of $a$ and $b$ being zero.
More generally, $C(S_{g,b})$ has a $k$-simplex for each $(k + 1)$-tuple of vertices
where each pair of corresponding isotopy classes has geometric intersection
number zero.
\end{defn}

The following theorem is an excellent result by Masur-Minsky:

\begin{thm}\label{mcgcchyp}(Theorem 1.1 in \cite{MM99})
The curve complex $C(S_{g,b})$ is a $\delta$-hyperbolic metric space, where $\delta$ depends on $S_{g,b}$. 
\end{thm}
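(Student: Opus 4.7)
The plan is to establish Gromov hyperbolicity of $C(S_{g,b})$ by the combinatorial method of \emph{unicorn paths}, avoiding Teichm\"uller geometry altogether. The idea is to exhibit an explicit family of paths between vertices and verify a slim triangle condition for triangles built from such paths; a standard hyperbolicity criterion then upgrades this to Gromov hyperbolicity.

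First I would fix, for any two essential simple closed curves $a, b \subset S_{g,b}$ in minimal position, an intersection point and two base points $p_a \in a$, $p_b \in b$. For each intersection point $x \in a \cap b$, form the curve $c_x$ obtained by concatenating the sub-arc of $a$ from $p_a$ to $x$ with the sub-arc of $b$ from $x$ to $p_b$, provided this concatenation can be resolved to an essential simple closed curve (the combinatorial condition is that the sub-arcs intersect only at $x$). Ordering the intersection points by their position along $a$ produces a sequence $a = c_0, c_1, \dots, c_N = b$ in $C(S_{g,b})$. The first technical step is the \emph{disjointness lemma}: consecutive unicorns $c_i$ and $c_{i+1}$ can be realized disjointly in $S_{g,b}$, and so they span an edge in $C(S_{g,b})$. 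This makes the unicorn sequence an actual edge-path in $C(S_{g,b})$.

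Next I would prove the key \emph{slim triangle} estimate: given three curves $a, b, d$ with chosen base points, every unicorn curve on the path from $a$ to $b$ lies within a uniformly bounded distance (in $C(S_{g,b})$) of the union of the unicorn paths from $a$ to $d$ and from $d$ to $b$. The proof is a combinatorial case analysis on how the arcs of $a$, $b$, $d$ cross each other: given an arbitrary unicorn $c_x$ between $a$ and $b$, one identifies either an intersection point of $c_x$ with $d$ or a sub-arc of $d$ nearly disjoint from $c_x$, and in either case constructs a unicorn for one of the other two sides which is at bounded distance from $c_x$. One wants the constant to be universal (independent of $S_{g,b}$), but dependence on $S_{g,b}$ is already enough for the statement.

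Finally, I would invoke the standard hyperbolicity criterion of Gromov that if, in a geodesic metric space, for any three points there exist connecting paths (here, the unicorn paths) such that each side lies in a $k$-neighborhood of the union of the other two, then the space is $\delta$-hyperbolic with $\delta$ depending only on $k$. Combined with the observation that unicorn paths are uniform quasi-geodesics (which follows from the slim triangle estimate together with the path structure), this gives the desired $\delta$-hyperbolicity. The hardest step is unquestionably the combinatorial slim triangle estimate: making the intersection-arc surgery work uniformly and without losing more than a bounded amount when moving from a unicorn on one side of the triangle to a unicorn on another side. Low-complexity surfaces (small genus, few boundary components) require separate direct verification, since the curve complex is then degenerate or disconnected and one passes to a suitable modification (e.g., arc-and-curve complex or Farey graph).
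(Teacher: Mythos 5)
The paper does not actually prove this statement: it is imported verbatim as Theorem 1.1 of Masur--Minsky \cite{MM99}, whose original argument is Teichm\"uller-geometric --- one coarsely projects Teichm\"uller geodesics to the curve complex, shows the resulting path family is coarsely contracting, and applies a contraction criterion for hyperbolicity. Your proposal is therefore a genuinely different route: it is the combinatorial surgery-path argument of Hensel--Przytycki--Webb (unicorn paths) and its Przytycki--Sisto variant (bicorn curves). That route is valid, and compared with the cited proof it buys two things: it is elementary (no Teichm\"uller theory, no train tracks or nested subsurface projections), and it yields \emph{uniform} hyperbolicity, i.e.\ a constant $\delta$ independent of $S_{g,b}$, which is strictly stronger than what the statement asks. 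The architecture you describe --- an explicit path family, a slim-triangle estimate for triangles of such paths, then a ``guessing geodesics'' criterion \`a la Bowditch --- is exactly how that proof runs (and note that the criterion needs, besides slimness, only a bounded-diameter condition for paths between nearby points; the quasi-geodesity of the paths is a consequence, not a needed input).

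However, as written your construction has a genuine defect. The concatenation of the sub-arc of $a$ from $p_a$ to $x$ with the sub-arc of $b$ from $x$ to $p_b$ is an \emph{arc} from $p_a$ to $p_b$, not a closed curve; no resolution at a single intersection point $x$ will close it up, so your ``unicorn sequence'' does not consist of vertices of $C(S_{g,b})$ and the subsequent slim-triangle analysis has nothing to apply to. The two correct versions are: (i) genuine unicorn paths, where $a$ and $b$ are \emph{arcs} with endpoints on the boundary or at punctures, and the unicorn is again an arc --- so the slim-triangle estimate lives in the arc graph, and one must transfer hyperbolicity to $C(S_{g,b})$ via the arc-and-curve complex and a coarse Lipschitz retraction; this transfer is needed for \emph{every} surface with boundary, not only in low complexity as you suggest, and it fails outright in the closed case $b=0$, where there are no arcs at all; or (ii) bicorn curves, where one takes a sub-arc of $a$ and a sub-arc of $b$ sharing \emph{two} intersection points $x,y \in a \cap b$, whose union genuinely is an essential closed curve; this works directly in $C(S_{g,b})$, including the closed case. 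Either fix restores the argument, but one of them is required.
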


The following definition will be required in stating Theorem \ref{mcg3act}. 

\begin{defn}
We shall call a subgroup $G$ of $MCG(S_{g,b})$ \textit{reducible} if there is a one dimensional submanifold
$C$ of $S_{g,b}$, consisting of a finite, non-empty, system of disjoint, non-peripheral, simple closed curves on $S_{g,b}$  such that for any $f\in G$ there is a homeomorphism $F: S_{g,b}\rightarrow S_{g,b}$ in the isotopy class of $f$ that leaves $C$ invariant (that is, such that $F(C)=C$). 
\end{defn}

Now, we state some theorems which will be useful for proving Theorem \ref{mcg}.

\begin{thm}
An element $\phi$ of the mapping class group $MCG(S_{g,b})$ acts loxodromically on $C(S_{g,b})$
if and only if $\phi$ is `pseudo-Anosov'. 
\end{thm}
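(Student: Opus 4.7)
The plan is to invoke the Nielsen--Thurston classification, which partitions each element of $MCG(S_{g,b})$ into one of three types: periodic (finite order), reducible, or pseudo-Anosov; then one analyses the action on $C(S_{g,b})$ case by case. Theorem \ref{mcgcchyp} guarantees that $C(S_{g,b})$ is genuinely Gromov hyperbolic, so the dichotomy ``bounded orbits versus positive translation length'' from Subsection 2.2 is meaningful.

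For the ``only if'' direction I would rule out the periodic and reducible types. If $\phi$ is periodic, some power equals the identity, so every orbit on $C(S_{g,b})$ is finite and in particular bounded; hence $\phi$ acts elliptically. If $\phi$ is reducible, by definition it preserves a finite system $C$ of disjoint, non-peripheral, essential simple closed curves; the isotopy classes of the components of $C$ form a finite $\phi$-invariant subset of the vertex set of $C(S_{g,b})$, so a suitable power of $\phi$ fixes at least one of these vertices, and the orbits of $\phi$ on $C(S_{g,b})$ are bounded. In neither case can $\phi$ act loxodromically, since loxodromic isometries have unbounded orbits on $\partial C(S_{g,b})$.

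For the converse I need to show that every pseudo-Anosov $\phi$ acts loxodromically on $C(S_{g,b})$. The strategy, due to Masur--Minsky (Proposition 4.6 of \cite{MM99}), is to fix a vertex $\alpha\in C(S_{g,b})$ and prove that the orbit map $n\mapsto \phi^n\alpha$ is a quasi-isometric embedding of $\mathbb{Z}$ into $C(S_{g,b})$. Two ingredients enter: (a) an upper bound of the form $d_{C(S_{g,b})}(\beta,\beta')\leq 2\log_2 i(\beta,\beta')+2$ relating curve-complex distance to geometric intersection number, and (b) Thurston's theory of the invariant measured foliations $\mathcal{F}^\pm$ of $\phi$, which are filling, distinct, and attract/repel all other projective classes under iteration; this forces $\phi^n\alpha\to\mathcal{F}^+$ in $\mathcal{PMF}(S_{g,b})$ as $n\to+\infty$ and $\phi^{-n}\alpha\to\mathcal{F}^-$, so that intersection numbers $i(\alpha,\phi^n\alpha)$ grow at least exponentially in $|n|$, forcing $d_{C(S_{g,b})}(\alpha,\phi^n\alpha)$ to grow linearly in $|n|$. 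Positive stable translation length then yields a loxodromic action by the Gromov classification recalled in Subsection 2.2.

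The main obstacle is this converse direction: eliminating the periodic and reducible cases is a soft fixed-set argument, but establishing positive translation length for a pseudo-Anosov is a nontrivial quasi-geodesic estimate in the curve complex, and is precisely one of the main technical contributions of \cite{MM99}; in a complete write-up one would either quote it directly or reproduce the intersection-number argument sketched above.
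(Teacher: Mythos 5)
The paper itself gives no proof of this statement: it is quoted as background, and it is precisely Proposition 4.6 of \cite{MM99}, so your decision to reduce everything to that citation is consistent with what the paper does. Your ``only if'' direction is also correct and genuinely soft: a periodic element has finite orbits, and a reducible element has a power fixing a vertex of $C(S_{g,b})$ (the isotopy class of a component of the invariant curve system), so in both cases orbits are bounded and the element is elliptic. (Minor slip: loxodromic isometries have unbounded orbits \emph{in} $C(S_{g,b})$, not ``on $\partial C(S_{g,b})$''.)

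The genuine gap is in your sketch of the converse. The inequality you invoke, $d_{C(S_{g,b})}(\beta,\beta')\leq 2\log_2 i(\beta,\beta')+2$, bounds curve-complex distance \emph{from above} by intersection number, so exponential growth of $i(\alpha,\phi^n\alpha)$ yields only a linear \emph{upper} bound on $d_{C(S_{g,b})}(\alpha,\phi^n\alpha)$; it can never force distances to grow. The implication ``exponential intersection growth $\Rightarrow$ linear distance growth'' is in fact false: take $\phi$ reducible but pseudo-Anosov on a proper essential subsurface $S'\subset S_{g,b}$. For any curve $\alpha$ crossing $S'$, the numbers $i(\alpha,\phi^n\alpha)$ grow exponentially, yet $\phi$ fixes the isotopy classes of the components of $\partial S'$, hence fixes a vertex of $C(S_{g,b})$ and acts with bounded orbits. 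What Masur--Minsky actually prove is a distance \emph{lower} bound $d_{C(S_{g,b})}(\alpha,\phi^n\alpha)\geq c|n|$, and this requires their finer machinery (north--south dynamics on $\mathcal{PMF}$ together with a nesting/projection lemma exploiting that the invariant foliations of a pseudo-Anosov \emph{fill} the surface), not intersection-number growth alone. So your write-up is fine if the converse is quoted outright from \cite{MM99}, but the ``reproduce the intersection-number argument'' alternative you offer would not close the proof.
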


\begin{thm}\label{mcg3act}(\cite{Iva92}, Theorem 4.6 in \cite{MP89})
Every subgroup $H \leq MCG(S_{g,b})$ either
\begin{itemize}
\item contains two `pseudo-Anosov diffeomorphisms' of $S_{g,b}$ that generate a rank two free
subgroup of $H$, or
\item  is virtually cyclic and virtually generated by a `pseudo-Anosov diffeomorphism', or
\item $H$ is reducible.
\end{itemize}
\end{thm}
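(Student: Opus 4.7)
The plan is to derive this trichotomy from the Nielsen--Thurston classification of individual elements together with a ping-pong / boundary dynamics argument on the space of projective measured foliations $PMF(S_{g,b})$ (equivalently, on the Gromov boundary of the curve complex $C(S_{g,b})$, which is hyperbolic by Theorem \ref{mcgcchyp}). I would split the analysis according to whether $H$ contains a pseudo-Anosov element.

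First, I would handle the case in which $H$ contains no pseudo-Anosov element. By Nielsen--Thurston, every element of $H$ is then either periodic or reducible. The goal is to produce a single common reduction system, i.e.\ a non-empty system of disjoint essential simple closed curves preserved (up to isotopy) by every element of $H$. The standard device here is the \emph{canonical reduction system} $\sigma(f)$ attached to each reducible $f$, together with its behaviour under conjugation and commutation (the Birman--Lubotzky--McCarthy theory). One shows that $\bigcup_{f \in H} \sigma(f)$ has, after throwing away the finite-order obstructions, a non-empty intersection pattern that is $H$-invariant up to isotopy; this produces the common reduction curves and shows $H$ is reducible. The main subtlety is handling torsion: periodic elements of $H$ either fix a non-trivial multicurve (and contribute nothing bad), or act irreducibly on $S_{g,b}$, but in the latter case one uses the Nielsen realization theorem to realize $\langle f\rangle$ as a finite group of diffeomorphisms, and inductively reduce to smaller surfaces.

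Next suppose $H$ contains at least one pseudo-Anosov element $f$. Each pseudo-Anosov has exactly two fixed points on $PMF(S_{g,b})$, namely its stable and unstable projective measured foliations, and acts on the complement as a source/sink dynamical system. Call this pair $\mathrm{Fix}(f) \subset PMF$. I would now split into two subcases depending on the $H$-orbit of $\mathrm{Fix}(f)$. If every pseudo-Anosov $g \in H$ satisfies $\mathrm{Fix}(g) = \mathrm{Fix}(f)$, then $H$ stabilizes this unordered pair; by a standard argument (the stabilizer in $MCG(S_{g,b})$ of a pair of filling projective measured foliations is virtually infinite cyclic, generated by a pseudo-Anosov up to finite index) one concludes $H$ is virtually cyclic and virtually generated by a pseudo-Anosov. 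Otherwise there is a pseudo-Anosov $g \in H$ with $\mathrm{Fix}(g) \cap \mathrm{Fix}(f) = \emptyset$, and a classical north--south ping-pong argument applied to sufficiently high powers $f^N, g^N$ on $PMF$ (choosing disjoint neighbourhoods of the four fixed points) produces a free subgroup of rank two inside $H$ whose generators are both pseudo-Anosov.

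The main obstacle, as usual, is the \emph{first} case (no pseudo-Anosov in $H$): producing a global common reduction system from the pointwise Nielsen--Thurston data requires the full force of canonical reduction systems and some care with torsion. The second case is dynamically clean once one has the north--south dynamics of pseudo-Anosovs on $PMF$, which in turn is equivalent to the loxodromic action on the hyperbolic space $C(S_{g,b})$ guaranteed by Theorem \ref{mcgcchyp}; from there the ping-pong and the virtual cyclicity of the common-fixed-point stabilizer are relatively standard.
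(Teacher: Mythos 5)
The first thing to say is that the paper contains no proof of this statement: it is imported as background from Ivanov \cite{Iva92} and McCarthy--Papadopoulos (Theorem 4.6 of \cite{MP89}) and used as a black box in the proof of Theorem \ref{mcg}, so the only meaningful comparison is with the classical arguments of those references, and your sketch does follow their route (Nielsen--Thurston classification, canonical reduction systems, north--south dynamics plus ping-pong). Your pseudo-Anosov case is essentially correct as a sketch. Two things are used implicitly: the standard lemma that the fixed-point pairs of two pseudo-Anosovs in $PMF(S_{g,b})$ are either equal or disjoint (without it, ``not all pseudo-Anosovs share $\mathrm{Fix}(f)$'' does not yet give you a $g$ with $\mathrm{Fix}(g)\cap\mathrm{Fix}(f)=\emptyset$), and conjugation equivariance $\mathrm{Fix}(hfh^{-1})=h\cdot\mathrm{Fix}(f)$ (which is what makes $H$ stabilize the pair in your first subcase). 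Also, parenthetically, $PMF(S_{g,b})$ is \emph{not} the Gromov boundary of $C(S_{g,b})$ --- that boundary is Klarreich's ending lamination space --- though the ping-pong can indeed be run in either setting.

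The genuine gap is in your first case. It is false that a subgroup containing no pseudo-Anosov element must be reducible: a cyclic subgroup generated by a finite-order \emph{irreducible} mapping class (these exist, e.g.\ rotation classes whose quotient orbifold is a sphere with three cone points) contains no pseudo-Anosov and preserves no multicurve whatsoever, so the common reduction system you set out to construct simply does not exist, and neither canonical-reduction-system bookkeeping nor your appeal to Nielsen realization can manufacture one. This is exactly why the theorem in the cited sources carries a fourth alternative, ``$H$ is finite,'' which the paper's transcription silently drops (harmlessly for its application, since in the proof of Theorem \ref{mcg} the image $H_\alpha$ is assumed infinite, but fatally for your proof, which claims to establish the trichotomy as literally stated). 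The correct repair of your case is: torsion subgroups of $MCG(S_{g,b})$ are finite because $MCG(S_{g,b})$ is virtually torsion-free; hence an \emph{infinite} subgroup with no pseudo-Anosov contains an infinite-order element, which by Nielsen--Thurston is reducible with nonempty canonical reduction system, and the Birman--Lubotzky--McCarthy/Ivanov argument then assembles a nonempty $H$-invariant reduction system. Without splitting off the finite alternative in this way, your case 1 proves a false statement and cannot be completed as written.
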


We end this subsection by introducing a concept which will be useful in the proof of Theorem \ref{mcg}.

\begin{defn}
We define \textit{complexity} of a surface, denoted by $\tau(S_{g,b})$, by the quantity $(3g-3+b)$. 
\end{defn}

\begin{rem}\label{complexity0}
It is not difficult to see that $\tau(S)\leq 0$ if and only if  $S$ is one of the following surfaces: annulus, sphere, pair of pants, disc or torus. 
\end{rem}

\subsection{Braid group}

 The \textit{braid group} on $n$ strands (denoted $B_n$),  is the group whose elements are equivalence classes of $n$-braids (e.g. under ambient isotopy), and whose group operation is composition of braids. 
 
 \begin{prop}\label{braidmcg}
 Let $S_{n+1}$ be the sphere with $(n+1)$ punctures and $MCG(S_{n+1})$  be the mapping class group of $S_{n+1}$. We also assume that $MCG_{x}(S_{n+1})$ denotes the subgroup of $MCG(S_{n+1})$ which fixes a fixed puncture $x$. Then we have the following short exact sequence:
 $$1\rightarrow\mathbb{Z}\rightarrow B_n\rightarrow MCG_x(S_{n+1})\rightarrow 1 $$
 \end{prop}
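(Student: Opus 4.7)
The plan is to prove this via Artin's classical identification $B_n \cong MCG(D, P_n, \partial D)$, combined with a capping-off argument that converts mapping classes of a punctured disk (fixing its boundary) into mapping classes of a punctured sphere (fixing a distinguished puncture). Here $D$ is a closed disk with $n$ marked interior points $P_n = \{p_1, \ldots, p_n\}$ (preserved setwise) and $\partial D$ fixed pointwise.

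First I would set up the surjection. Embed $D \hookrightarrow S^2$ so that $S^2 \setminus \mathrm{int}(D)$ is a disk $D'$, and place the distinguished puncture $x$ in the interior of $D'$; then $S_{n+1}$ is obtained by declaring the $n$ points of $P_n$ together with $x$ to be punctures. Any homeomorphism of $D$ fixing $\partial D$ pointwise extends by the identity across $D'$ to a homeomorphism of $S_{n+1}$ fixing $x$, yielding a well-defined homomorphism $\Phi : B_n \to MCG_x(S_{n+1})$. Surjectivity is standard: given any $\phi \in MCG_x(S_{n+1})$, a representative can be isotoped (rel $x$) to be the identity on a small disk neighborhood of $x$, whereupon it restricts to an element of $MCG(D, P_n, \partial D) \cong B_n$ mapping to $\phi$.

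Next I would identify $\ker \Phi$ with $\mathbb{Z}$. The natural candidate is the cyclic subgroup generated by the full twist $\Delta^2$, which under Artin's identification corresponds to the Dehn twist around $\partial D$. This lies in $\ker \Phi$ because in $S_{n+1}$ the curve $\partial D$ bounds the disk $D'$ which contains only the single puncture $x$, and a Dehn twist around such a curve is trivial in $MCG_x(S_{n+1})$ (one isotopes the twist through $D'$). To prove equality, I would use the fibration
$$\mathrm{Homeo}^+(D, P_n, \partial D) \longrightarrow \mathrm{Homeo}^+(D, P_n) \longrightarrow \mathrm{Homeo}^+(\partial D) \simeq S^1,$$
whose target is connected with fundamental group $\mathbb{Z}$. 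The long exact sequence of homotopy groups gives
$$\pi_1(S^1) = \mathbb{Z} \longrightarrow B_n \longrightarrow \pi_0\,\mathrm{Homeo}^+(D, P_n) \longrightarrow 1,$$
in which the image of the generator is precisely $\Delta^2$; injectivity follows because $\Delta^2$ has infinite order, being the generator of the center of $B_n$. A second capping-off then identifies $\pi_0 \mathrm{Homeo}^+(D, P_n) \cong MCG_x(S_{n+1})$, since capping $\partial D$ with a disk containing $x$ turns a boundary rotation into an isotopy fixing $x$.

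The main obstacle is the rigorous identification of the kernel, specifically verifying that the connecting map $\mathbb{Z} \to B_n$ lands in $\langle \Delta^2 \rangle$ and is injective there. This rests on Alexander's trick (which implies triviality of $MCG(D, \partial D)$ and enables the isotopy-extension arguments above) and on the computation of the center of $B_n$; both are classical in the theory of surface mapping class groups but are the technical heart of the argument.
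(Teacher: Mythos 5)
The paper offers no proof of this proposition at all: it is stated in Subsection 2.6 as a known classical fact (it is the capping exact sequence for the punctured disk, cf.\ Farb--Margalit, \emph{A Primer on Mapping Class Groups}, Proposition 3.19), so there is no argument in the paper to compare yours against. Your proof is the standard one and is essentially sound: Artin's identification $B_n \cong \pi_0\,\mathrm{Homeo}^+(D,P_n,\partial D)$, the restriction fibration onto $\mathrm{Homeo}^+(\partial D)\simeq S^1$, the computation that the connecting map $\pi_1(S^1)\to B_n$ hits exactly the boundary twist $\Delta^2$ (lift the loop of rotations by a twist supported in a collar of $\partial D$ disjoint from $P_n$), and the infinite order of $\Delta^2$ (Chow's theorem that it generates the center of $B_n$) together yield $1\to\mathbb{Z}\to B_n\to \pi_0\,\mathrm{Homeo}^+(D,P_n)\to 1$. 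Two caveats. First, the identification $\pi_0\,\mathrm{Homeo}^+(D,P_n)\cong MCG_x(S_{n+1})$, your ``second capping-off,'' is not a formality: surjectivity follows from uniqueness of disk neighborhoods of $x$ up to isotopy, but injectivity needs its own fibration argument (e.g.\ over the space of embedded disks about $x$ avoiding $P_n$, which is homotopy equivalent to a circle, the resulting twist about $\partial D'$ being trivial in $MCG_x(S_{n+1})$ precisely because $D'$ contains only the single puncture $x$); you flag this as the technical heart, which is fair, but as written it is asserted rather than proved. Second, the statement fails for $n=1$ ($B_1$ is trivial while the claimed kernel is $\mathbb{Z}$), so your appeal to the infinite order of $\Delta^2$ implicitly requires $n\geq 2$; that restriction is also implicit, but unstated, in the paper.
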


\subsection{Outer automorphism group of a free group} 

We start this subsection with the definition of outer automorphism group of a free group.

\begin{defn}
The outer automorphism group of a finitely generated free group $F_N$, is the quotient, $Aut(F_N) / Inn(F_N)$, where $Aut(F_N)$ is the automorphism group of $F_N$ and $Inn(F_N)$ is the subgroup consisting of inner automorphisms. This group will be denoted by $Out(F_N)$.
\end{defn}

Now, we introduce the concept of free factor complex $FF_N$. It was originally introduced by Hatcher and Vogtmann in \cite{HV98}. 

\begin{defn}
The \textit{free factor complex} $FF_N$ is an abstract simplicial complex associated to a free group $F_N$.
The set of vertices $V(FF_N)$ of $FF_N$ is defined as the set of all $F_N$-conjugacy classes $[A]$ of proper free factors $A$ of $F_N$. Two distinct vertices $[A]$ and $[B]$ of $FF_N$ are joined by an edge whenever there exist proper free factors $A, B$ of $F_N$ representing $[A]$ and $[B]$ respectively, such that either
$A \leq B$ or $B \leq A$. More generally, for $k \geq 1$, a collection of $(k + 1)$ distinct vertices $[A_0], \cdots , [A_k]$ of $FF_N$ spans a $k$-simplex in $FF_N$ if, up to a possible re-ordering of these vertices there exist representatives $A_i$ of $[A_i]$ such that $A_0 \leq A1 \leq \cdots \leq A_k$.
There is a canonical action of $Out(F_N)$ on $FF_N$ by simplicial automorphisms: If $\Delta=
{[A_0], . . . , [A_k]}$ is a $k$-simplex and $\phi\in Out(F_N )$, then $\phi(\Delta) := {[\phi(A_0)], \cdots , [\phi(A_k)]}$. 
\end{defn}

The following result has been obtained by Bestvina-Feign. 

\begin{thm}\label{FFNhyp}\cite{BF14} 
The free factor complex $FF_N$ is hyperbolic.
\end{thm}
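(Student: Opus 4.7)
The plan is to follow the Bestvina--Feighn strategy, which parallels Masur--Minsky's proof of hyperbolicity of the curve complex via Teichmuller space: one uses Culler--Vogtmann Outer space $CV_N$ as an auxiliary ``model space'', projects canonical paths from $CV_N$ down to $FF_N$, and then verifies Bowditch's guessing-geodesics criterion for the resulting family of paths.

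First I would fix a coarse projection $\pi : CV_N \to FF_N$ sending a marked metric graph $G$ to the set of conjugacy classes of proper free factors realized as cores of connected proper subgraphs of $G$. An elementary argument using optimal Lipschitz morphisms shows that $\pi$ has uniformly bounded point-images and is coarsely Lipschitz in the Lipschitz metric on $CV_N$. I would then use Stallings--Skora folding paths $\{G_t\}_{t \in [0,1]}$, obtained by continuously folding illegal turns along an optimal morphism $G_0 \to G_1$, as the canonical paths between any two points of $CV_N$.

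The core technical step is to prove that $\pi(\{G_t\})$ is an unparametrized quasi-geodesic in $FF_N$, with constants depending only on $N$. The key lemma behind this is an ``active interval'' statement: for each vertex $[A] \in FF_N$, the set of times $t$ at which $[A]$ is coarsely realized by $\pi(G_t)$ forms a uniformly coarsely connected subinterval of $[0,1]$, which rules out macroscopic backtracking. Given this, hyperbolicity follows by verifying the thin-triangles condition. For this last step I would introduce Bestvina--Feighn's subfactor projections (one-sided projections from $FF_N$ onto the free factor complex of each proper free factor $A$) and use a Behrstock-type inequality to force two folding paths emanating from a common vertex to fellow-travel after projection until the active interval of the relevant free factor ends.

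The main obstacle is precisely this active-interval / unparametrized quasi-geodesic statement. Folding paths are not geodesics in $CV_N$ for the symmetric Lipschitz metric, so the absence of backtracking in their $FF_N$-projections is a genuinely subtle fact, requiring careful analysis of how cores of subgraphs can appear, disappear and reappear along a fold; this is the place where the finite rank $N$ is crucially used, through a uniform combinatorial complexity bound on the graphs involved. The remaining ingredients --- coarse Lipschitz-ness of $\pi$, Behrstock-type inequalities for subfactor projections, and the guessing-geodesics criterion itself --- are either elementary or formal consequences of the quasi-geodesicity of projected folding paths.
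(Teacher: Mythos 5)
The first thing to note is that the paper does not prove this statement at all: Theorem \ref{FFNhyp} is imported verbatim from Bestvina--Feighn \cite{BF14} and used as a black box in the proof of Theorem \ref{outfn}. So the only meaningful comparison is between your sketch and the argument of \cite{BF14} itself.

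Your outline does reproduce the architecture of that argument: Outer space $CV_N$ with the Lipschitz metric as a model space, Stallings--Skora folding paths as the canonical family of paths, a coarse projection $\pi : CV_N \to FF_N$, and the assertion that projected folding paths are unparametrized quasi-geodesics, fed into a hyperbolicity criterion. But as a proof it has a genuine gap, and you name it yourself: essentially the entire content of Bestvina--Feighn's paper is the ``active interval''/no-backtracking statement together with the contraction property of the coarse projection onto a projected folding path, and your proposal offers no argument for either --- it only records that this step is ``genuinely subtle.'' A plan that defers exactly the step constituting the difficulty is a roadmap, not a proof. Two further inaccuracies are worth flagging. First, Bestvina--Feighn do not close the argument with subfactor projections and a Behrstock-type inequality; those tools belong to their separate, later paper on subfactor projections, whereas the hyperbolicity paper verifies a Masur--Minsky-type criterion (a coarsely transitive family of paths admitting uniformly contracting projections forces hyperbolicity, and makes the paths uniform unparametrized quasi-geodesics), rather than Bowditch's guessing-geodesics criterion applied to thin triangles. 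Second, if you genuinely wanted a different route, the alternative in the literature is to start from Handel--Mosher's hyperbolicity of the free splitting complex and apply the Kapovich--Rafi projection criterion to the natural coarse map onto $FF_N$; that bypasses the folding-path contraction analysis, but only by substituting an equally heavy external input.
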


The following concept will be required in our next results. 

\begin{defn} 
An automorphism $\phi\in Out(F_N)$ is fully \textit{irreducible} if no non-trivial power of $\phi$ preserves the conjugacy class of a proper free factor of $F_N$. 
\end{defn}

Now, we state some theorems which will be useful for proving Theorem \ref{outfn}.

\begin{thm} 
An automorphism $\phi\in Out(F_N)$ acts loxodromically on $FF_N$ if and only if $\phi$ is fully irreducible. 
\end{thm}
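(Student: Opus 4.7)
The plan is to prove the two implications separately. The forward direction, that loxodromic implies fully irreducible, is essentially formal and goes by contrapositive. Suppose $\phi$ is not fully irreducible, so some non-trivial power $\phi^n$ preserves the conjugacy class $[A]$ of a proper free factor $A$ of $F_N$. By the definition of the $Out(F_N)$-action on $FF_N$, the vertex $[A]\in FF_N$ is then fixed by $\phi^n$. Hence the $\phi^n$-orbit of $[A]$ is a single point, and in particular bounded, so $\phi^n$ acts elliptically on $FF_N$. Since a non-zero power of a loxodromic isometry is again loxodromic, this forces $\phi$ itself to be non-loxodromic, completing the contrapositive.

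For the converse, suppose $\phi\in Out(F_N)$ is fully irreducible. The plan is to appeal to the Bestvina-Feighn-Handel train track machinery to obtain a train track representative $f\colon G\to G$ on a marked graph, whose transition matrix is Perron-Frobenius with a stretch factor $\lambda>1$; full irreducibility is exactly what guarantees that no invariant subgraph absorbs the dynamics and that the matrix is irreducible in the Perron-Frobenius sense. The next step is to use the folding path technology developed in Bestvina-Feighn's proof of hyperbolicity of $FF_N$ (Theorem \ref{FFNhyp}) to convert this metric expansion on $G$ into progress in $FF_N$: starting from any vertex $[A_0]$, one produces a $\phi$-coarsely invariant folding line in $FF_N$ along which the iterates $\phi^n[A_0]$ fellow-travel a quasi-geodesic with linear progress in $n$. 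Combined with hyperbolicity, this shows $\phi$ acts loxodromically, with stable translation length bounded below in terms of $\log\lambda$.

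The main obstacle is this reverse direction: converting the expansion on the marked graph $G$ into a positive lower bound on displacement in the coarser complex $FF_N$. Large distance in $FF_N$ corresponds to the absence of a common proper free factor between two vertices, so one must invoke full irreducibility at every iteration to rule out the possibility that the orbit stays within bounded distance of some $\phi$-quasi-invariant factor. This is precisely what the projection arguments and folding line analysis in \cite{BF14} achieve, and I would invoke those results as a black box rather than reprove the entire train track theory from scratch; the first direction is then a short direct verification, and the two together give the claimed characterization.
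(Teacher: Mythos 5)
The paper never proves this statement: it is quoted as a known background fact, with the substantive content residing in Bestvina--Feighn's work \cite{BF14} on the free factor complex (the paper only cites that work for hyperbolicity, Theorem \ref{FFNhyp}, but the loxodromic characterization comes from the same source). Measured against that, your proposal is correct and is in fact more detailed than the paper's treatment. Your forward direction is a complete, valid argument: if $\phi^n$ preserves $[A]$ then $\phi^n$ fixes the vertex $[A]$ of $FF_N$, hence is elliptic, and since a non-zero power of a loxodromic is loxodromic, $\phi$ cannot be loxodromic. Your reverse direction correctly identifies where the real work lies (turning train track expansion into definite progress in the much coarser complex $FF_N$) and correctly defers it to the folding-path arguments of \cite{BF14}; black-boxing this is entirely appropriate here, since reproving it would amount to reproducing a substantial part of that paper.

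Two minor inaccuracies in the black-boxed part, neither fatal: the claim that the stable translation length is bounded below \emph{in terms of} $\log\lambda$ is not what \cite{BF14} gives --- the Lipschitz displacement $\log\lambda$ lives in Outer space, and the translation length in $FF_N$ is instead bounded below by a constant depending only on $N$, with no direct formula in $\lambda$; and the existence of a train track representative needs only irreducibility (Bestvina--Handel), while full irreducibility is what is needed for the orbit in $FF_N$ not to stall near an invariant factor, as you say later. Also note the usual caveat (which the paper itself glosses over) that for $N=2$ the complex $FF_N$ as defined in the paper is discrete and the statement requires the modified (Farey-graph) definition.
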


\begin{thm}\label{outfn3act}\cite{HM09}
Every subgroup of $Out(F_N)$ (finitely generated or not) either
\begin{itemize}
\item contains two fully irreducible automorphisms that generate a rank two free subgroup, or
\item is virtually cyclic and virtually generated by a fully irreducible automorphism, or
\item virtually fixes the conjugacy class of a proper free factor of $F_N$ .
\end{itemize}
\end{thm}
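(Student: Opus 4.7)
The plan is to run the Gromov classification of actions on hyperbolic spaces (Subsection 2.2) for the action of a subgroup $H\leq Out(F_N)$ on the free factor complex $FF_N$, which is hyperbolic by Theorem \ref{FFNhyp}. The key input is that $\phi\in Out(F_N)$ is loxodromic on $FF_N$ if and only if $\phi$ is fully irreducible (the theorem stated just above Theorem \ref{outfn3act}). The three alternatives in the conclusion should correspond to three classes in the classification, so the proof reduces to matching them up and ruling out the intermediate possibilities.

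First, suppose $H$ contains a fully irreducible element, i.e.\ a loxodromic for the action on $FF_N$. If $H$ contains two fully irreducibles whose fixed-point pairs in $\partial FF_N$ are disjoint, then a ping-pong argument applied to sufficiently high powers produces a rank-two free subgroup of fully irreducibles, giving the first alternative. Otherwise all loxodromic elements of $H$ share the same pair of boundary fixed points, so the action is lineal. To conclude that $H$ is then virtually cyclic, hence virtually generated by a fully irreducible (second alternative), I would invoke an acylindricity-type property of the $Out(F_N)$-action on $FF_N$, which both forces any lineal acylindrical action to be virtually cyclic and excludes the quasi-parabolic case altogether.

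Second, suppose $H$ contains no fully irreducible automorphism. Then every element of $H$ acts elliptically on $FF_N$, and by acylindricity this upgrades to the statement that $H$ itself acts elliptically, i.e.\ every $H$-orbit in $FF_N$ is bounded. One then has to promote ``the $H$-orbit of some vertex is bounded'' to ``a finite-index subgroup of $H$ fixes a single vertex $[A]$ of $FF_N$,'' which is precisely the third alternative (virtually fixes the conjugacy class of a proper free factor $A$ of $F_N$). The mechanism is standard: pick a coarse center of an $H$-orbit, observe that only finitely many vertices of $FF_N$ lie within a bounded radius, and extract a finite-index subgroup of $H$ fixing each such vertex individually.

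The main obstacle is this last step. A bounded orbit on an abstract hyperbolic space need not produce a genuinely fixed simplex, so one cannot conclude purely from hyperbolicity of $FF_N$; one really needs the combinatorial structure of $FF_N$ (local finiteness of vertices in a ball, and the interplay between stabilizers of free factors and vertex stabilizers in $Out(F_N)$) to convert ``boundedness'' into a virtual fix of a specific conjugacy class. In the original Handel--Mosher approach the whole trichotomy is proved without invoking hyperbolicity of $FF_N$ at all, by building directly on attracting laminations and relative train track representatives to control subgroups with no fully irreducible element; the hyperbolicity-plus-acylindricity route outlined here is a more recent alternative that hides this technical core inside Theorem \ref{FFNhyp} and the acylindricity of the $FF_N$-action.
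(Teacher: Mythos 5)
The paper offers no proof of this statement at all: it is quoted directly from Handel--Mosher \cite{HM09}, whose argument is built on relative train tracks and attracting laminations rather than on the geometry of $FF_N$, so your hyperbolicity-plus-acylindricity route is necessarily different from the source --- and it has two genuine gaps. The first is that every group-level step you take (ruling out parabolic and quasi-parabolic actions, lineal $\Rightarrow$ virtually cyclic, ``all elements elliptic'' $\Rightarrow$ bounded orbits) is drawn from the theory of acylindrical actions on hyperbolic spaces (Theorem \ref{acy3act} of the paper, due to Osin), but acylindricity of the $Out(F_N)$-action on $FF_N$ is not a known theorem. What Bestvina--Feighn \cite{BF14} prove, beyond hyperbolicity (Theorem \ref{FFNhyp}), is the WPD property for fully irreducible elements; that makes $Out(F_N)$ acylindrically hyperbolic as an abstract group but does not make this particular action acylindrical (contrast with the mapping class group on the curve complex, where acylindricity is a theorem of Bowditch). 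Without it, the element-wise dichotomy survives (any non-fully-irreducible $\phi$ has a power fixing a vertex, hence is elliptic), but for an arbitrary --- possibly infinitely generated --- subgroup $H$ with no fully irreducible element you cannot conclude bounded orbits, nor can you dispatch the lineal and quasi-parabolic cases.

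The second gap is fatal to your final step even if bounded orbits were granted: the claim that ``only finitely many vertices of $FF_N$ lie within a bounded radius'' is false. Like the curve complex, $FF_N$ is locally infinite --- already the set of vertices at distance $1$ from a fixed $[A]$ (conjugacy classes of free factors nested with $A$) is infinite --- so a coarse-center argument cannot convert a bounded orbit into a virtually fixed vertex. This conversion, from the coarse statement ``bounded orbits'' to the algebraic statement ``virtually fixes the conjugacy class of a proper free factor,'' is exactly the technical core of the theorem, and it is what Handel--Mosher's train-track machinery exists to accomplish. Your closing paragraph correctly identifies this as the main obstacle, but the mechanism you propose to overcome it relies on a finiteness property the complex does not have; as written, the proposal assumes the theorem in its essential case.
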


\begin{thm}\label{subIAn}(Corollary 2.9 in \cite{BW11})
Let $\overline{IA_N}$ be the kernel of the map $Out(F_N)\rightarrow GL_N(\mathbb{Z})$ given by the action of $Out(F_N)$ on the first homology of $F_N$. Then every non-trivial subgroup of $\overline{IA_N}$
maps onto $\mathbb{Z}$. 
\end{thm}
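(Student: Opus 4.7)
The plan is to use the Andreadakis--Johnson filtration of $IA_N$ (and its descent to $\overline{IA_N}$), whose successive graded pieces are torsion-free abelian, to extract a surjection onto $\mathbb{Z}$ from any non-trivial subgroup. Setting $\mathcal{A}_k$ to be the subgroup of $Aut(F_N)$ acting trivially on $F_N/\gamma_{k+1}(F_N)$, where $\gamma_k$ denotes the lower central series, we have $\mathcal{A}_1 = IA_N$, and a classical theorem of Andreadakis yields $\bigcap_k \mathcal{A}_k = \{1\}$. The induced filtration $\overline{\mathcal{A}_k} := \bigl(\mathcal{A}_k \cdot Inn(F_N)\bigr)/Inn(F_N)$ of $\overline{IA_N}$ inside $Out(F_N)$ then also has trivial intersection.

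The key arithmetic input is the Johnson homomorphism, which provides an injection
$$\tau_k \colon \mathcal{A}_k/\mathcal{A}_{k+1} \hookrightarrow \mathrm{Hom}\bigl(F_N^{ab},\, \gamma_{k+1}(F_N)/\gamma_{k+2}(F_N)\bigr),$$
whose target is a finitely generated free abelian group, because both $F_N^{ab} \cong \mathbb{Z}^N$ and every graded piece of the lower central series of a free group are finitely generated and torsion-free. After accounting for the image of $Inn(F_N)$, the graded quotients $\overline{\mathcal{A}_k}/\overline{\mathcal{A}_{k+1}}$ of $\overline{IA_N}$ remain torsion-free abelian.

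Given a non-trivial subgroup $H \leq \overline{IA_N}$, I would choose the largest $k \geq 1$ with $H \subseteq \overline{\mathcal{A}_k}$; such a $k$ exists by the trivial-intersection property. By maximality, the composition $H \hookrightarrow \overline{\mathcal{A}_k} \twoheadrightarrow \overline{\mathcal{A}_k}/\overline{\mathcal{A}_{k+1}}$ is non-zero, so its image is a non-trivial subgroup of a finitely generated free abelian group and therefore admits a surjection onto $\mathbb{Z}$ (any non-zero subgroup of $\mathbb{Z}^m$ does). Precomposing yields the required surjection $H \twoheadrightarrow \mathbb{Z}$.

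The main technical obstacle is verifying that the Andreadakis--Johnson formalism descends cleanly from $Aut(F_N)$ to $Out(F_N)$, i.e.\ that the image of $Inn(F_N)$ inside each Johnson graded piece sits as a saturated (pure) subgroup of the free abelian target, so that $\overline{\mathcal{A}_k}/\overline{\mathcal{A}_{k+1}}$ is torsion-free. This is handled by identifying $Inn(F_N) \cap \mathcal{A}_k$ with conjugations by elements of $\gamma_k(F_N)$ and computing $\tau_k$ on these explicitly via the commutator pairing; modulo this identification, the argument proceeds as outlined.
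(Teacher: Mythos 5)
The paper itself offers no proof of this statement: it is quoted verbatim as Corollary 2.9 of \cite{BW11}, so your proposal must be judged against the argument in that reference rather than against anything in the paper. Your overall strategy --- the Andreadakis filtration $\mathcal{A}_k$, the Johnson embeddings $\tau_k$ of the graded pieces into finitely generated free abelian groups, and the ``largest $k$ with $H\subseteq\overline{\mathcal{A}_k}$'' trick --- is indeed the standard route, and it does prove cleanly that every non-trivial subgroup of $IA_N\leq Aut(F_N)$ surjects onto $\mathbb{Z}$. But the entire difficulty of the stated theorem is the descent from $Aut(F_N)$ to $Out(F_N)$, since $\overline{IA_N}$ lives in $Out(F_N)$, and there your proposal has two gaps, only one of which you flag.

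The unflagged gap is the sentence asserting that the induced filtration $\overline{\mathcal{A}_k}=\bigl(\mathcal{A}_k\cdot Inn(F_N)\bigr)/Inn(F_N)$ ``then also has trivial intersection.'' This is not a formal consequence of $\bigcap_k\mathcal{A}_k=\{1\}$: for a descending chain $A_k$ with trivial intersection and a subgroup $B$, the intersection $\bigcap_k A_kB$ can be strictly larger than $B$. What is really needed is that $Inn(F_N)$ is \emph{closed} in the topology on $Aut(F_N)$ defined by the $\mathcal{A}_k$. Concretely, an element of $\bigcap_k\overline{\mathcal{A}_k}$ is represented by $\phi\in Aut(F_N)$ which agrees modulo $\gamma_{k+1}(F_N)$ with conjugation by some $w_k$, and the $w_k$ form a coherent sequence only in the pro-nilpotent completion of $F_N$; nothing formal forces them to stabilize to an element of $F_N$. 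Ruling this out requires genuine theorems, e.g.\ that free groups are conjugacy separable with respect to their nilpotent (or finite $p$-group) quotients, combined with Grossman's theorem that an automorphism of $F_N$ preserving every conjugacy class is inner. The second gap you do flag, namely purity of $\tau_k\bigl(Inn(F_N)\cap\mathcal{A}_k\bigr)=\mathrm{ad}\bigl(\gamma_k/\gamma_{k+1}\bigr)$ inside $\mathrm{Hom}\bigl(F_N^{ab},\gamma_{k+1}/\gamma_{k+2}\bigr)$, but ``computing $\tau_k$ explicitly via the commutator pairing'' is not a proof of it. What must be shown is: if $[\bar w,x]$ is divisible by $m$ for every $x\in F_N^{ab}$, where $\bar w$ lies in degree $k$ of the free Lie ring, then $\bar w$ itself is divisible by $m$. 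The standard argument reduces modulo a prime and invokes the structure of free Lie algebras over $\mathbb{F}_p$ (by Shirshov--Witt every subalgebra is free, so abelian subalgebras have dimension at most one, and hence no non-zero homogeneous element brackets trivially with all of degree one); this is a real input, not a routine computation. Both gaps are load-bearing: without the first there is no ``largest $k$,'' and without the second the graded pieces of the $Out$-filtration could a priori contain torsion, in which case a non-trivial image of $H$ need not surject onto $\mathbb{Z}$ and your final step collapses.
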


\subsection{Acylindrically Hyperbolic Group} 
In this subsection, we first define acylindrical action of a group. 

\begin{defn}
The action of a group $G$ on a metric space $X$ is called \textit{acylindrical} if for every
$\epsilon >0$ there exist $R,N >0$ such that for every two points $x, y$ with $d(x, y) \geq R$,
there are at most $N$ elements $g\in G$ satisfying $d(x, gx) \leq \epsilon$ and $d(y,gy) \leq \epsilon$.
\end{defn}

Now, we define acylindrically hyperbolic group. 

\begin{defn}
 We call a group $G$ \textit{acylindrically hyperbolic} if $G$ admits a non-elementary acylindrical action on a hyperbolic space. 
\end{defn}

\begin{ex}
The examples of acylindrically hyperbolic groups are as follows (see \cite{Osi16})
\begin{itemize}
\item non-elementary hyperbolic groups and relatively hyperbolic groups;
\item $MCG(S_{g,b})$ unless $g=0$ and $p\leq 3$;
\item $Out(F_N)$ ($N\geq 2$);
\item directly indecomposable right angled Artin groups;
\item most 3-manifold groups.
\end{itemize}
\end{ex}

Now, we state the following theorem which will be useful in the proof of Theorem \ref{acylin}:

\begin{thm}\label{acy3act}\cite{Osi16}
 Let $G$ be a group acting acylindrically on a hyperbolic space. Then $G$ satisfies
exactly one of the following three conditions.
\begin{itemize}
\item $G$ has bounded orbits,
\item $G$ is virtually cyclic and contains a loxodromic element,
\item $G$ contains infinitely many independent loxodromic elements.
\end{itemize}
\end{thm}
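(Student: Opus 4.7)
The plan is to combine the general four-fold classification of isometric actions on a Gromov hyperbolic space (elliptic, parabolic, lineal, quasi-parabolic, general type, as recalled in Subsection 2.2) with the acylindricity hypothesis to rule out the parabolic and quasi-parabolic cases, and then to refine the remaining three cases into the stated trichotomy.

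First, I would revisit the $|\Lambda(G)|$-classification: an action with $|\Lambda(G)|=0$ is exactly the elliptic one, giving bounded orbits (the first bullet). The crucial step is to show that acylindricity forbids $|\Lambda(G)|=1$ and the quasi-parabolic subcase of $|\Lambda(G)|=\infty$. For the parabolic case, I would argue by contradiction: assuming unbounded orbits with no loxodromic element, I would pick a base point $x$, take an element $g\in G$ with $d(x,gx)$ much larger than the acylindricity constant $R$ associated to some small $\varepsilon>0$, and then study the orbit $\{g^n x\}$. Since $g$ is not loxodromic, its stable translation length vanishes, so high powers $g^n$ move $x$ a large but sublinear amount; a pigeonhole argument on intermediate points along a quasi-geodesic between $x$ and $g^n x$, combined with $\delta$-hyperbolicity, produces arbitrarily many distinct elements $h_i\in G$ that $\varepsilon$-almost fix two points at distance $\geq R$, contradicting acylindricity. (This is the argument in Osin's original proof.) For the quasi-parabolic case, the common boundary fixed point $\xi$ of all loxodromics allows one to take a loxodromic $g$ with attracting fixed point $\xi$ and consider, for a horoball-like subset around $\xi$, the elements $g^{-n} h g^n$ for a second loxodromic $h$ with the same fixed point; these move a suitable pair of far-apart points by uniformly small amounts while yielding infinitely many distinct group elements, again violating acylindricity.

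Having eliminated the parabolic and quasi-parabolic possibilities, only the elliptic, lineal, and general-type cases survive. In the general-type case, by definition $G$ contains two independent loxodromic elements, and a standard ping-pong construction (using that the action is non-elementary and acylindrical) produces infinitely many pairwise independent loxodromics, yielding the third bullet. In the lineal case, all loxodromic elements share the same pair of boundary fixed points $\{\xi_+,\xi_-\}$, so $G$ preserves a quasi-axis $\ell$ (up to finite Hausdorff distance). I would then pick two points $x,y\in\ell$ with $d(x,y)>R_\varepsilon$; every $g\in G$ maps the ordered pair $(\xi_-,\xi_+)$ to itself or swaps it, hence stabilizes $\ell$ coarsely and moves $x,y$ by translations along $\ell$. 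Acylindricity then bounds the number of elements translating by any given amount, so the kernel of the translation-length homomorphism $G\to\mathbb{R}$ is finite and the image is discrete, exhibiting $G$ as virtually cyclic and giving the second bullet.

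The main obstacle is the elimination of the parabolic and quasi-parabolic cases: both require a careful pigeonhole/ping-pong argument on boundary orbits that genuinely uses the quantitative bound in acylindricity, rather than purely topological properties of the boundary. The lineal-to-virtually-cyclic step and the general-type-to-infinitely-many-independent-loxodromics step are comparatively soft once the first two obstructions are in place, and mutual exclusivity of the three bullets is automatic from the $|\Lambda(G)|$-invariant (it equals $0$, $2$, or $\infty$ respectively).
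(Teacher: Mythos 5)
First, a point of comparison: the paper does not prove this statement at all --- it is quoted verbatim from Osin \cite{Osi16} (Theorem 1.1 there) and used as a black box in the proofs of Theorem \ref{acylin} and Theorem \ref{hhg}. So your proposal can only be measured against Osin's original argument, whose top-level architecture you have reproduced correctly: invoke the classification recalled in Subsection 2.2, use acylindricity to eliminate the parabolic and quasi-parabolic cases, show that a lineal acylindrical action forces the group to be virtually cyclic, and promote two independent loxodromics to infinitely many in the general-type case; mutual exclusivity via $|\Lambda(G)|\in\{0,2,\infty\}$ is fine.

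The genuine gap is in your elimination of the parabolic case, which is precisely the technical heart of Osin's theorem. Your argument picks a single $g\in G$ with $d(x,gx)$ large and exploits that ``high powers $g^n$ move $x$ a large but sublinear amount.'' That only makes sense if $g$ itself is a parabolic isometry (unbounded $\langle g\rangle$-orbit, zero stable translation length). But Bowditch's lemma shows that an acylindrical action has no parabolic \emph{elements} at all: every element is elliptic or loxodromic. Hence in a putative parabolic \emph{action} of $G$ (unbounded $G$-orbit, no loxodromics), every single element is elliptic, your chosen $g$ has a bounded $\langle g\rangle$-orbit, and the pigeonhole along $\{g^n x\}$ produces nothing. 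In other words, your sketch at best re-proves Bowditch's element-level statement; it does not touch the group-level phenomenon of a parabolic action all of whose elements are elliptic --- which genuinely exists for non-acylindrical actions (e.g.\ $\bigoplus_{n\in\mathbb{N}}\mathbb{Z}/2\mathbb{Z}$ acting on a tree fixing an end) and whose impossibility under acylindricity requires an argument involving several group elements simultaneously; this is the part Osin singles out as the main content of the theorem. A secondary quibble: in the quasi-parabolic case, the conjugates $g^{-n}hg^{n}$ are loxodromic with the same stable translation length $\tau(h)>0$ as $h$, so they move \emph{no} point of the space by less than $\tau(h)$ and cannot move points by ``uniformly small amounts''; the argument survives only if you run acylindricity with $\varepsilon>\tau(h)+O(\delta)$ (legitimate, since the definition quantifies over all $\varepsilon$), or if you replace the conjugates by horospherical elements such as the commutators $[g^{-n},h]$.
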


\subsection{Hierarchically Hyperbolic Group} Before going into the definition of Hierarchically Hyperbolic Group, we define Hierarchically Hyperbolic Space. The definition of Hierarchically Hyperbolic space is taken from \cite{BHS17}. 

\begin{defn}
The metric space $(\mathcal{X}, d_{\mathcal{X}})$ is a \textit{Hierarchically Hyperbolic Space}(HHS) if there exists $\delta\geq 0$, an index set $\mathcal{S}$, and a set $\{\hat{\mathcal{C}}W: W\in \mathcal{S}\}$ of
$\delta$-hyperbolic spaces, such that the following conditions are satisfied:
\begin{itemize}
\item[(1)] (\textbf{Projections}) There is a set $\{\pi_W:\mathcal{X}\rightarrow 2^{\hat{\mathcal{C}}W}:W\in \mathcal{S}\}$  of projections sending
points in $\mathcal{X}$ to sets of diameter bounded by some $\xi\geq 0$ in the various $\hat{\mathcal{C}}W$, $W\in \mathcal{S}$.
\item[(2)] (\textbf{Nesting}) $\mathcal{S}$ is equipped with a partial order $\sqsubseteq$, and either $S=\phi$ or $S$ contains a unique $\sqsubseteq$-maximal element; when $V \sqsubseteq W$, we say $V$ is nested in $W$. We require that $W\sqsubseteq W$ for all $W\in\mathcal{S}$. For each $W\in \mathcal{S}$, we denote by $\mathcal{S}_W$ the set of $V\in\mathcal{S}$ such that $V\sqsubseteq W$. Moreover, for all $V,W\in\mathcal{S}$ with $V$ properly nested into $W$, there is a specified subset
$\rho_W^V\subset \hat{\mathcal{C}}W$ with $diam_{\hat{\mathcal{C}}W}(\rho_W^V)\leq\xi$. There is also a \textit{projection} $\rho_V^W: \hat{\mathcal{C}}W\rightarrow 2^{\hat{\mathcal{C}}V}$. (The similarity in notation is justified by viewing $\rho_W^V$ as a coarsely constant map $\hat{\mathcal{C}}V\rightarrow 2^{\hat{\mathcal{C}}W}$.)

\item[(3)] (\textbf{Orthogonality}) $\mathcal{S}$ has a symmetric and anti-reflexive relation called \textit{orthogonality}: we write $V \perp W$ when $V, W$ are orthogonal. Also, whenever $V \sqsubseteq W$ and $W\perp U$, we
require that $V\perp U$. Finally, we require that for each $T\in\mathcal{S}$ and each $U\in\mathcal{S}_T$ for which $\{V\in \mathcal{S}_T : V\perp U\}\neq \phi $, there exists $W\in \mathcal{S}_T-\{T\}$, so that whenever $V\perp U$ and $V\sqsubseteq T$, we have $V\sqsubseteq W$. Finally, if $V\perp W$, then $V, W$ are not $\sqsubseteq$-comparable. 

\item[(4)] (\textbf{Transversality and consistency}) If $V, W\in\mathcal{S}$ are not orthogonal and neither is nested in the other, then we say $V,W$ are transverse, denoted $V\pitchfork W$. There exists $k_0\geq 0$ such that if $V\pitchfork W$, then there are sets $\rho_W^V\subseteq \hat{\mathcal{C}}W$ and $\rho_V^W\subseteq \hat{\mathcal{C}}V$ each of diameter at most $\xi$ and satisfying: 

$$min\{d_{\hat{\mathcal{C}}W}\big(\pi_W(x),\rho_W^V\big), d_{\hat{\mathcal{C}}V}\big(\pi_V(x),\rho_V^W\big) \}\leq \kappa_0 $$

for all $x\in\mathcal{X}$; alternatively, in the case $V\sqsubseteq W$, then for all $x\in\mathcal{X}$ we have:

$$min\{d_{\hat{\mathcal{C}}W}\big(\pi_W(x),\rho_W^V\big),  diam_{\hat{\mathcal{C}}V}\big(\pi_V(x)\cup \rho_V^W(\pi_W(x))\big)\}\}\leq \kappa_0.$$

Suppose that: either $U \not\sqsubseteq V$ or $U\pitchfork V$, and either $U\not\sqsubseteq W$ or $U\pitchfork W$. Then we have: if 
$V\pitchfork W$, then 

$$min\{d_{\hat{\mathcal{C}}W}\big(\rho^U_W,\rho_V^W\big), d_{\hat{\mathcal{C}}V}\big(\rho^U_V,\rho_W^V\big) \}\leq \kappa_0 $$

and if $V\not\sqsubseteq W$, then 

$$min\{d_{\hat{\mathcal{C}}W}\big(\rho^U_W,\rho^V_W\big),  diam_{\hat{\mathcal{C}}V}\big(\rho^U_V\cup \rho^W_V(\rho^U_W)\big)\}\leq \kappa_0.$$

Finally, if $V \sqsubseteq U$ or $U\perp V$, then$d_{\hat{\mathcal{C}}W}(\rho^U_W,\rho^V_W)\leq \kappa_0$
 whenever $W\in \mathcal{S}-\{U,V\}$ satisfies either $V\sqsubseteq W$ or $V\pitchfork W$ and either $U\sqsubseteq  W$ or $U\pitchfork W$.

\item[(5)] (\textbf{Finite complexity}) There exists $n\geq 0$, the complexity of $\mathcal{X}$ (with respect to $\mathcal{S}$), so that any sequence $(U_i)$ with $U_i$ properly nested into $U_{i+1}$ has length at most $n$. 

\item[(6)] (\textbf{Distance Formula}) There exists $s_0\geq \xi$ such that for all $s\geq s_0$ there exist constants $K,C$ such that for all $x,x'\in\mathcal{X}$,

$$d_{\mathcal{X}}(x,x')\asymp_{(K,C)}\sum_{W\in\mathcal{S}}   \{\{d_{\hat{\mathcal{C}}W}\big( \pi_W(x), \pi_W(x') \big)   \}\}_s.$$

We often write $\sigma_{\mathcal{X},s}(x,x')$ to denote the right-hand side of Item (6); more generally, given $W\in\mathcal{S}$, we denote $\sigma_{W,s}(x,x')$ the corresponding sum taken over $\mathcal{S}_W$.

\item[(7)] (\textbf{Large Links}) There exists $\lambda\geq 1$ such that the following holds. Let $W\in\mathcal{S}$ and let $x,x'\in\mathcal{X}$. Let $N=\lambda d_{\hat{\mathcal{C}}W}\big(\pi_W(x),\pi_W(x')\big)+\lambda$. Then there exists $\{T_i\}_{i=1,\cdots,N}\subseteq \mathcal{S}_W-\{W\}$ such that for all $T\in \mathcal{S}_W-\{W\}$, either $T\in\mathcal{S}_{T_i}$ for some $i$, or $d_{\hat{\mathcal{C}}}T\big(\pi_T(x),\pi_T(x')\big)<s_0$. Also, $d_{\hat{\mathcal{C}}}W\big(\pi_W(x),\rho_W^{T_i})\leq N$ for each $i$. 

\item[(8)] (\textbf{Bounded Geodesic Image}) For all $W\in\mathcal{S}$, all $V\in \mathcal{S}_W-\{W\}$, and all geodesics $\gamma$ of $\hat{\mathcal{C}}W$,
either $diam_{\hat{\mathcal{C}}V}\big(\rho_V^W(\gamma)\big)\leq B$ or $\gamma\cap\mathcal{N}_E(\rho_W^V)\neq \emptyset$ for some uniform $B,E$.

\item[(9)] (\textbf{Realization}) For each $\kappa$ there exists $\theta_e, \theta_u$ such that the following holds. Let $\overrightarrow{b}\in\prod_{W\in\mathcal{S}} 2^{\hat{\mathcal{C}}W}$ have each coordinate correspond to a subset of $\hat{\mathcal{C}}W$ of diameter at most $\kappa$; for each $W$, let $b_W$ denote the $\hat{\mathcal{C}}W$-coordinate of $\overrightarrow{b}$. Suppose that whenever $V\pitchfork W$ we have 

$$min\{d_{\hat{\mathcal{C}}W}(b_W,\rho_W^V),  d_{\hat{\mathcal{C}}V}(b_V,\rho_V^W)\}\leq \kappa $$

and whenever $V\sqsubseteq W$ we have 

$$ min\{d_{\hat{\mathcal{C}}W}(b_W,\rho_W^V),  diam_{\hat{\mathcal{C}}V}(b_V\cup\rho_V^W(b_W))\}\leq \kappa$$

Then there the set of all $x\in\mathcal{X}$ so that $d_{\hat{\mathcal{C}}W}(b_W,\pi_W(x))\leq \theta_e$ for all $\hat{\mathcal{C}}W\in\mathcal{S}$
is non-empty and has diameter at most $\theta_u$. ( A tuple $\overrightarrow{b}$ satisfying the inequalities above is called \textit{$\kappa$-consistent}.)

\item[(10)] (\textbf{Hierarchy paths}) There exists $D\geq 0$ so that any pair of points in $\mathcal{X}$ can be joined by a $(D,D)$-quasi-geodesic $\gamma$ with the property that, for each $W\in\mathcal{S}$, the projection $\pi_W(\gamma)$ is at Hausdorff distance at most $D$ from any geodesic connecting $\pi_W(x)$ to $\pi_W(y)$. We call such quasi-geodesics \textit{hierarchy paths}. 

\end{itemize}

\end{defn}

Now, we define hierarchically hyperbolic group.

\begin{defn}
A finitely generated group $G$ is a \textit{Hierarchically Hyperbolic Group} (HHG) if there exists a hierarchically hyperbolic space $(\mathcal{X},\mathcal{S})$ such that $G\leq Aut(\mathcal{S})$, the quasi-action of $G$ on $\mathcal{X}$ is proper and cobounded, and $G$ acts on $\mathcal{S}$ with finitely many orbits. We refer the reader to Subsection 1.2 of \cite{DHS17} for the detailed definition of 
$Aut(\mathcal{S})$ the action of $Aut(\mathcal{S})$ on $\mathcal{X}$.
\end{defn}

\begin{ex} 
The examples of hierarchically hyperbolic groups are given below:
\begin{itemize}
\item[(a)] mapping class group of a connected, oriented surface of finite type;
\item[(b)] hyperbolic groups;
\item[(c)]some relatively hyperbolic groups;
\item[(d)]many groups acting geometrically on $CAT(0)$ cube complexes, in particular, right-angled Artin groups.
\end{itemize}

\end{ex}

Now, we state one theorem, two propositions and one lemma which will be useful in the proof of Theorem \ref{outfn}.

\begin{thm}\label{hhgacyact}( Theorem K in \cite{BHS17})
Let $(\mathcal{X}, \mathcal{S})$ be a hierarchically hyperbolic space and  $G\leq Aut(\mathcal{S})$ act properly and cocompactly on $\mathcal{X}$ . Let $S$ be the maximal element of $\mathcal{S}$ and denote by $\hat{\mathcal{C}}S$ the corresponding hyperbolic space. Then $G$ acts acylindrically on $\hat{\mathcal{C}}S$.
\end{thm}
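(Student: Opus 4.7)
The plan is to verify the acylindricity definition directly, using the HHS axioms (Large Links, Bounded Geodesic Image, the Distance Formula, and Realization) to lift an approximate stabilization on the top-level hyperbolic space $\hat{\mathcal{C}}S$ to an approximate stabilization of two nearby points inside the ambient HHS $\mathcal{X}$, and then to use the properness of the $G$-action on $\mathcal{X}$ to count such elements.

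Fix $\epsilon > 0$ and consider $x,y \in \hat{\mathcal{C}}S$ with $d_{\hat{\mathcal{C}}S}(x,y) \geq R$, where $R$ is to be chosen. Using Realization, pick $\tilde x, \tilde y \in \mathcal{X}$ with $\pi_S(\tilde x)$ and $\pi_S(\tilde y)$ coarsely equal to $x$ and $y$. For any $g \in G$ with $d_{\hat{\mathcal{C}}S}(x,gx) \leq \epsilon$ and $d_{\hat{\mathcal{C}}S}(y,gy) \leq \epsilon$, the equivariance of the projections $\pi_W$ under $\mathrm{Aut}(\mathcal{S})$ gives small displacement in the $S$-coordinate; the goal is to promote this to a uniform bound on $d_{\mathcal{X}}(\tilde x, g\tilde x)$ and $d_{\mathcal{X}}(\tilde y, g\tilde y)$, so that properness of the $G$-action on $\mathcal{X}$ produces the acylindricity constant.

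The crucial step is a domain-by-domain analysis. Let $\gamma$ be a geodesic in $\hat{\mathcal{C}}S$ from $\pi_S(\tilde x)$ to $\pi_S(\tilde y)$. By the Large Links axiom, the set of $V \sqsubsetneq S$ whose image $\rho_S^V$ lies in a bounded neighborhood of $\gamma$ is controlled by a linear function of $d_{\hat{\mathcal{C}}S}(\pi_S(\tilde x), \pi_S(\tilde y))$, hence by a function of $d_{\hat{\mathcal{C}}S}(x,y)$; call this finite collection $\mathcal{F}$. For any $V \in \mathcal{S} \setminus (\mathcal{F} \cup \{S\})$, the Bounded Geodesic Image axiom forces $\rho_V^S(\gamma)$ to be bounded, and combined with the consistency inequalities this uniformly bounds $d_{\hat{\mathcal{C}}V}(\pi_V(\tilde x), \pi_V(\tilde y))$; since $g$ displaces the $S$-projections by at most $\epsilon$ one obtains in the same way a uniform bound on $d_{\hat{\mathcal{C}}V}(\pi_V(\tilde x), \pi_V(g\tilde x))$ that drops below the Distance Formula threshold $s$. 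For $V \in \mathcal{F}$ one uses the same BGI/consistency mechanism at a coarser scale, obtaining a uniform (but larger) bound on each coordinatewise displacement. Plugging all this into the Distance Formula yields $d_{\mathcal{X}}(\tilde x, g\tilde x) \leq R'$ and $d_{\mathcal{X}}(\tilde y, g\tilde y) \leq R'$ for some $R' = R'(\epsilon)$ depending only on $\epsilon$ and the HHS constants.

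The final step is properness: since $g$ sends the ball $B(\tilde x, 1)$ into $B(\tilde x, R'+1)$, properness of the $G$-action on $\mathcal{X}$ bounds the number of such $g$ by a constant $N = N(\epsilon)$, which is the acylindricity constant. The main obstacle is organizing the coordinatewise estimates so that the ``exceptional'' collection $\mathcal{F}$, which a priori depends on $x$ and $y$, contributes a uniformly bounded total to the Distance Formula sum; this is where the delicate interaction between Large Links (cardinality bound on $\mathcal{F}$) and Bounded Geodesic Image (size bound on each coordinate) must be calibrated against the threshold $s$ appearing in the Distance Formula, and where the HHS axioms really do all the work.
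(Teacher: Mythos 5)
Your proposal cannot be compared with ``the paper's proof'' in the usual sense, because the paper offers no proof of this statement: it is quoted verbatim as Theorem K of \cite{BHS17} and used as a black box in the proof of Theorem \ref{hhg}. Measured against any correct proof, your sketch has a genuine gap at its central step, namely the claim that the two hypotheses $d_{\hat{\mathcal{C}}S}(x,gx)\le\epsilon$ and $d_{\hat{\mathcal{C}}S}(y,gy)\le\epsilon$ can each be promoted, domain by domain via Bounded Geodesic Image and consistency, to bounds $d_{\mathcal{X}}(\tilde x,g\tilde x)\le R'(\epsilon)$ and $d_{\mathcal{X}}(\tilde y,g\tilde y)\le R'(\epsilon)$. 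Bounded Geodesic Image controls $\rho_V^S(\gamma)$ only when the geodesic $\gamma$ avoids the $E$-neighbourhood of $\rho_S^V$; for the domains $V$ whose $\rho_S^V$ lies within roughly $\epsilon+E$ of $x$, the short geodesic from $\pi_S(\tilde x)$ to $\pi_S(g\tilde x)$ gives no control whatsoever on $d_{\hat{\mathcal{C}}V}\bigl(\pi_V(\tilde x),\pi_V(g\tilde x)\bigr)$. These dangerous domains sit inside your exceptional family $\mathcal{F}$, where your sketch simply asserts that ``the same BGI/consistency mechanism at a coarser scale'' yields uniform bounds; that assertion is false, and it is not a matter of calibrating constants.

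A concrete obstruction lives in the motivating example, $G=MCG(S_{g,b})$ acting on $\mathcal{X}=$ the marking complex with $\hat{\mathcal{C}}S=$ the curve complex. Take $g=T_\alpha^n$, a large power of a Dehn twist about a curve $\alpha$ disjoint from $x$. Then $gx=x$, so $g$ displaces $x$ by $0$ in $\hat{\mathcal{C}}S$, yet the annular coordinate of any realization point $\tilde x$ at $\alpha$ is twisted $n$ times, so $d_{\mathcal{X}}(\tilde x,g\tilde x)\gtrsim n\to\infty$. Hence no implication of the form ``$\epsilon$-small displacement of $\pi_S(\tilde x)$ implies $R'(\epsilon)$-bounded displacement of $\tilde x$ in $\mathcal{X}$'' can hold, even with bounds depending on $\mathcal{F}$. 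Such $g$ fail to violate acylindricity only because, once $d_{\hat{\mathcal{C}}S}(x,y)\ge R$ is large, they must move $y$ far: $y$ crosses $\alpha$, the annular projection distance between $y$ and $gy$ is comparable to $n$, so Bounded Geodesic Image forces any geodesic $[y,gy]$ into the $E$-neighbourhood of $\alpha$, giving $d_{\hat{\mathcal{C}}S}(y,gy)\ge 2d_{\hat{\mathcal{C}}S}(y,\alpha)-O(1)\ge 2R-O(\epsilon)$. Thus any correct argument must play the smallness conditions at $x$ and at $y$ and the largeness of $R$ against one another to exclude large displacements in the domains near the endpoints; your argument never couples them --- you bound the displacement of $\tilde x$ and of $\tilde y$ separately, each from its own $\epsilon$-condition --- and this is exactly where it breaks. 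A secondary defect: since $\pi_V(g\tilde x)=g\,\pi_{g^{-1}V}(\tilde x)$, your coordinatewise comparisons involve projections to two different domains unless $g$ stabilizes $V$, so even the estimates for ``good'' domains need to be formulated equivariantly. These difficulties are precisely why Theorem K in \cite{BHS17} requires a genuinely delicate argument rather than a direct verification of the definition, with the counting via properness of the action on $\mathcal{X}$ being the only step your outline shares with it that works as stated.
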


\begin{prop}\label{hhg2act}(Proposition 9.2 in \cite{DHS17})
Let $(\mathcal{X},\mathcal{S})$ be an HHS with $\mathcal{X}$ proper and $\mathcal{S}$ countable. Let the countable
group $G \leq Aut(\mathcal{S})$ act with unbounded orbits in $\mathcal{X}$ and without a global fixed point in the boundary $\partial \hat{\mathcal{C}}S$ of $\hat{\mathcal{C}}S$.
Then either $G$ contains an irreducible axial element, or there exists $U\in \mathcal{S}-\{S\}$  so that $| G\cdot U| <\infty$.
\end{prop}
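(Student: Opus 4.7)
The plan is to analyze the induced $G$-action on the main coordinate hyperbolic space $\hat{\mathcal{C}}S$ via the classification of isometric actions on Gromov hyperbolic spaces (Subsection 2.2), and then use the HHS structure — particularly the Distance Formula, Bounded Geodesic Image, and Large Links axioms — to trade information between the $G$-actions on $\hat{\mathcal{C}}S$, on $\mathcal{X}$, and on the index set $\mathcal{S}$. The argument naturally splits according to whether the induced action on $\hat{\mathcal{C}}S$ is elliptic.

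If the action on $\hat{\mathcal{C}}S$ is not elliptic, the hypothesis that $G$ has no global fixed point in $\partial\hat{\mathcal{C}}S$ rules out the parabolic and quasi-parabolic cases (each of which produces a unique fixed boundary point), so $G$ must contain an axial element, i.e.\ one acting loxodromically on $\hat{\mathcal{C}}S$. To promote some axial element to an irreducible one, I would argue by contradiction: suppose every axial $g$ has finite $\langle g\rangle$-orbit on some proper $U_g \in \mathcal{S}-\{S\}$. In the lineal sub-case all loxodromics share a common pair of boundary points, so the associated invariant-domain structure is carried by $G$ and produces a proper $U$ with finite $G$-orbit directly. In the general-type sub-case I would pick two independent axials $g_1,g_2$ and form the ping-pong combinations $g_1^N g_2^N$; Bounded Geodesic Image forces any proper $U$ coarsely preserved by such a combination to have $\rho_S^U$ lying near both axes simultaneously, which for generic $N$ is impossible, so some combination must be irreducible.

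If the action on $\hat{\mathcal{C}}S$ is elliptic, then $\pi_S(G\cdot x)$ is bounded for each $x\in\mathcal{X}$, while $G\cdot x$ is unbounded in $\mathcal{X}$. By the Distance Formula, the truncated sum $\sum_W \{\{d_{\hat{\mathcal{C}}W}(\pi_W(x),\pi_W(g_n x))\}\}_s$ over proper $W$ must be unbounded along some sequence $g_n\in G$. Using countability of $\mathcal{S}$ and a pigeonhole/diagonal extraction, I would isolate a single proper $U$ such that $d_{\hat{\mathcal{C}}U}(\pi_U(x),\pi_U(g_n x))\to\infty$; by equivariance this rewrites via $\pi_{g_n^{-1}U}$, and if $G\cdot U$ were infinite then Large Links combined with Bounded Geodesic Image would propagate the projection growth back into $\hat{\mathcal{C}}S$, contradicting ellipticity. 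Hence $|G\cdot U|<\infty$. The main obstacle is precisely this elliptic case: rigorously showing that unbounded projections spread across infinitely many $G$-cousins of a proper domain must force growth in $\hat{\mathcal{C}}S$ via the $\rho$-projection machinery is delicate, and requires carefully splitting into the nested versus transverse subcases. This is where the full strength of the HHS axioms — beyond the mere acylindrical hyperbolicity of the $\hat{\mathcal{C}}S$-action supplied by Theorem \ref{hhgacyact} — is genuinely used.
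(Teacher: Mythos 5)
The first thing to note is that the paper you are being compared against does not prove this statement at all: it is imported verbatim as Proposition 9.2 of \cite{DHS17}, so the only meaningful comparison is with the proof given there. Regarding your first case (the $G$-action on $\hat{\mathcal{C}}S$ is non-elliptic): the conclusion is right, but the whole ``promotion'' apparatus is unnecessary and, as written, is not a proof. Once $G$ contains an element $g$ acting loxodromically on $\hat{\mathcal{C}}S$, that element is \emph{already} irreducible axial. Indeed, automorphisms in $Aut(\mathcal{S})$ fix the unique $\sqsubseteq$-maximal element $S$ and permute the sets $\rho^U_S$ equivariantly, so if a power $g^n$ preserved a proper domain $U\in\mathcal{S}-\{S\}$ we would get $g^n(\rho^U_S)=\rho^{g^nU}_S=\rho^U_S$, i.e.\ the loxodromic $g^n$ would preserve a subset of $\hat{\mathcal{C}}S$ of diameter at most $\xi$, which is impossible; and the Distance Formula makes $\langle g\rangle$-orbits in $\mathcal{X}$ quasi-geodesic. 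So no ping-pong, no ``generic $N$,'' and no lineal/general-type subdivision is needed; in particular your lineal sub-case claim that one ``produces a proper $U$ with finite $G$-orbit directly'' is both unsubstantiated and aimed at the wrong alternative (in that case one exhibits the irreducible axial element, not a finite orbit in $\mathcal{S}$).

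The genuine gap is in your elliptic case, which is the actual content of the proposition. From divergence of the truncated sums $\sum_{W\in\mathcal{S}-\{S\}}\{\{d_{\hat{\mathcal{C}}W}(\pi_W(x),\pi_W(g_nx))\}\}_s$ you cannot pigeonhole a single proper $U$ with $d_{\hat{\mathcal{C}}U}(\pi_U(x),\pi_U(g_nx))\to\infty$: the sum can diverge while every individual term stays below a fixed bound, with the number of contributing domains growing without bound, and countability of $\mathcal{S}$ does nothing to prevent this. This ``spread out over ever more domains'' configuration is exactly what a correct proof must confront, and your sketch assumes it away (you flag it as delicate, but flagging is not proving). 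A structural symptom: your argument never uses properness of $\mathcal{X}$, which is a stated hypothesis. In \cite{DHS17} properness is used precisely through the boundary theory developed in that paper -- for a proper HHS the space $\mathcal{X}\cup\partial\mathcal{X}$ is compact, so a subsequence $g_nx$ converges to a point $\xi\in\partial\mathcal{X}$; convergence in that boundary forces the finitely many, pairwise orthogonal domains supporting $\xi$ to have projections blowing up along $(g_n)$, and boundedness of the $\hat{\mathcal{C}}S$-orbit forces this support to consist of proper domains, from which the finite $G$-orbit domain is extracted. That compactness step is what legitimately produces the domain your pigeonhole tries to conjure; it is not recoverable from the Distance Formula, Large Links and Bounded Geodesic Image alone, and your closing claim that an infinite orbit $G\cdot U$ would ``propagate growth back into $\hat{\mathcal{C}}S$'' is asserted rather than argued. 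As it stands, the proposal does not prove the proposition.
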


\begin{prop}\label{hhgtitsalt}(Proposition 9.16 in \cite{DHS17})
 Let $G$ be be a hierarchically hyperbolic group. Then any $H \leq G$
containing an irreducible axial element is virtually $\mathbb{Z}$ or contains a non-abelian free group.
\end{prop}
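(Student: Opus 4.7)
The plan is to reduce this Tits-alternative-style statement to the trichotomy for acylindrical actions on hyperbolic spaces, exploiting the acylindrical action of a hierarchically hyperbolic group on its top-level hyperbolic space.

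First, I would apply Theorem \ref{hhgacyact} to deduce that $G$ acts acylindrically on $\hat{\mathcal{C}}S$, where $S$ is the $\sqsubseteq$-maximal element of $\mathcal{S}$. The restriction of this action to any subgroup $H \leq G$ is again acylindrical, since acylindricity is a property of the action that automatically passes to subgroups. Next, I would use that in the HHG setting an irreducible axial element is precisely one whose induced isometry of the top-level hyperbolic space $\hat{\mathcal{C}}S$ is loxodromic. Thus the hypothesis produces a loxodromic element for the acylindrical action $H \curvearrowright \hat{\mathcal{C}}S$.

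I would then invoke the trichotomy of Theorem \ref{acy3act} applied to this $H$-action: either all orbits are bounded, or $H$ is virtually cyclic and contains a loxodromic element, or $H$ contains infinitely many independent loxodromic elements. The first alternative is ruled out by the existence of the loxodromic element coming from our irreducible axial element, and the second alternative already gives the desired conclusion that $H$ is virtually $\mathbb{Z}$. In the third case, I would run the standard ping-pong argument on the Gromov boundary $\partial \hat{\mathcal{C}}S$: for any two independent loxodromic isometries $f,g \in H$, whose pairs of fixed points at infinity are pairwise disjoint by definition of independence, sufficiently large powers $f^n, g^n$ generate a free subgroup of rank two, producing the required non-abelian free subgroup of $H$.

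The main obstacle is essentially definitional: one has to verify that the notion of \emph{irreducible axial} element used in the HHG framework of \cite{DHS17} coincides with loxodromic action on $\hat{\mathcal{C}}S$, since all the dynamical input to the argument is extracted from the top-level hyperbolic space rather than from the full HHS machinery. Once this identification is in hand, the remainder of the argument is a fairly mechanical combination of Theorems \ref{hhgacyact} and \ref{acy3act} with the classical ping-pong lemma on the boundary of a Gromov-hyperbolic space, and in particular makes no use of hypotheses like finite generation on $H$.
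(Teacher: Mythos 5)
Your proposal is correct, and it is essentially the argument behind the cited result: the paper itself gives no proof of Proposition \ref{hhgtitsalt} (it is quoted from Proposition 9.16 of \cite{DHS17}), and the proof there runs exactly along your lines --- acylindricity of the $G$-action on $\hat{\mathcal{C}}S$ via Theorem \ref{hhgacyact}, restriction of acylindricity to $H$, Osin's trichotomy (Theorem \ref{acy3act}), and ping-pong on independent loxodromics in the non-elementary case. The only point deserving emphasis is the one you already flag: in \cite{DHS17} ``irreducible axial'' is defined through the action on $\mathcal{X}$ and $\mathcal{S}$, so the fact that such an element acts loxodromically on $\hat{\mathcal{C}}S$ is a short lemma of that paper rather than a pure definition, but granting it your argument is complete.
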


\begin{lem}\label{hhgsubfin}(Lemma 9.17 in \cite{DHS17})
 Let $G$ be an HHG and $(\mathcal{X},\mathcal{S})$ be an HHS associated with $G$. We assume that $S \in\mathcal{S}$ is the maximal element. Suppose that $H\leq G$ has bounded orbits in $\hat{\mathcal{C}}S$ and fixes some point $p$ in the boundary $\partial\hat{\mathcal{C}}S$ of $\hat{\mathcal{C}}S$. Then $| H|<\infty$.
\end{lem}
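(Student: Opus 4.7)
The plan is to combine the acylindricity of the ambient $G$-action on $\hat{\mathcal{C}}S$ with the classical fellow-traveling of geodesic rays toward a common ideal point. First, by Theorem \ref{hhgacyact}, $G$ acts acylindrically on the top-level hyperbolic space $\hat{\mathcal{C}}S$, and this property is inherited by the subgroup $H$ with the very same constants (if only $N$ elements of $G$ can simultaneously move a far-apart pair by at most $\epsilon$, then certainly at most $N$ elements of $H$ can do so). Since $H$ has bounded orbits in $\hat{\mathcal{C}}S$, fix a basepoint $x_0\in\hat{\mathcal{C}}S$ and a constant $D_0>0$ with $d_{\hat{\mathcal{C}}S}(x_0,hx_0)\leq D_0$ for every $h\in H$.

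Next I would use the boundary fixed point to manufacture a second almost-fixed point far from $x_0$. Fix a geodesic ray $\gamma\colon [0,\infty)\to\hat{\mathcal{C}}S$ from $x_0$ to $p$. Because $hp=p$, the translated ray $h\gamma$ is a geodesic ray from $hx_0$ (at distance at most $D_0$ from $x_0$) to the same ideal point $p$. A standard lemma in $\delta$-hyperbolic geometry then provides constants $T_0=T_0(D_0,\delta)$ and $K=K(D_0,\delta)$, independent of $h$, such that
\[
d_{\hat{\mathcal{C}}S}\bigl(\gamma(T),\, h\gamma(T)\bigr)\leq K \qquad \text{for all } T\geq T_0 \text{ and all } h\in H.
\]
Concretely, $h\gamma$ fellow-travels $\gamma$ up to a bounded reparametrization of size at most $D_0$, after which the two rays lie in a uniform neighborhood of each other.

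Finally, I would apply acylindricity at scale $\epsilon:=\max(D_0,K)$: let $R=R(\epsilon)$ and $N=N(\epsilon)$ be the corresponding constants from the definition of an acylindrical action, and set $y:=\gamma(T)$ for some $T\geq\max(T_0,R)$. Then $d_{\hat{\mathcal{C}}S}(x_0,y)=T\geq R$, while $d_{\hat{\mathcal{C}}S}(x_0,hx_0)\leq\epsilon$ and $d_{\hat{\mathcal{C}}S}(y,hy)\leq\epsilon$ uniformly for every $h\in H$; acylindricity then forces $|H|\leq N<\infty$. The step I expect to be most delicate is the fellow-traveling estimate of the second paragraph: one has to check that the constant $K$ truly depends only on $D_0$ and the hyperbolicity constant of $\hat{\mathcal{C}}S$, and not on the particular element $h$, which requires careful bookkeeping of the bounded reparametrization between two asymptotic geodesic rays. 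The remaining steps are direct applications of the definitions together with Theorem \ref{hhgacyact}.
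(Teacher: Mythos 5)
Your proposal is correct, but note that the paper does not actually prove this statement: it is imported verbatim as Lemma 9.17 of \cite{DHS17}, so there is no in-paper argument to compare against. What you have written is essentially a reconstruction of the standard proof (and close in spirit to the one in \cite{DHS17}): the acylindricity of the $G$-action on $\hat{\mathcal{C}}S$ supplied by Theorem \ref{hhgacyact} restricts to $H$ with the same constants, the bounded orbit gives one uniformly almost-fixed point $x_0$, the fixed boundary point manufactures a second almost-fixed point arbitrarily far away via fellow-traveling of asymptotic rays, and acylindricity then caps $|H|$ by $N$. The fellow-traveling estimate you flag as delicate is indeed fine: for two geodesic rays asymptotic to the same boundary point with origins at distance at most $D_0$, slimness of the ideal triangle with vertices $x_0$, $hx_0$, $p$ gives a bound of the form $K = D_0 + O(\delta)$ at matching parameters, visibly independent of $h$.

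The one genuine technical caveat is your very first construction: in a hyperbolic space that is not proper, a geodesic ray from $x_0$ to a given boundary point $p$ need not exist, and spaces of curve-complex type such as $\hat{\mathcal{C}}S$ are typically not proper (the present paper's blanket properness assumption in Section 2.1 papers over this, but it is not satisfied in the intended examples). This is easily repaired: in any geodesic $\delta$-hyperbolic space every boundary point is the endpoint of a $(1,c)$-quasi-geodesic ray with $c$ depending only on $\delta$, asymptotic quasi-geodesic rays still fellow-travel with constants depending only on $(\delta, c, D_0)$, and your acylindricity step only needs $d_{\hat{\mathcal{C}}S}(x_0, y)$ to exceed $R$, which a quasi-geodesic parametrization still guarantees for $T$ large. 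With that substitution the argument is complete.
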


\subsection{Groups with Property $F_{L^p}$ and a-$FL^p$-menability }

\begin{defn}
A discrete group $G$ has property $F_{L^p}$ if any  affine isometric action on $L^p$ has a fixed point. 
\end{defn}

By Theorem \ref{fixlp}, we obtain that Gromov's random monster has Property $F_{L^p}$ for almost every $\alpha\in\mathcal{A}(\Omega,T^j)$ and for $1< p<\infty$. Now, we introduce Property a-$F_{L^p}$-menability which is, in some sense, opposite to Property $F_{L^p}$. 

\begin{defn}\label{defnalpmenable}
A discrete group $G$ has \textit{Property a-$F_{L^p}$-menability} if there 
a proper affine isometric action of $G$ on an $L^p$ space for some $p\in (1,\infty)$. 
\end{defn}

The examples of groups with a-$F_{L^p}$-menability are as follows: 
\begin{itemize}
\item Groups with Haagerup Property (i.e. groups having proper affine isometric action on a Hilbert space), for example free group, lattices in $SL_2(\mathbb{R}$, $SO(n,1)$, $SU(n,1)$ etc., see \cite{CCJ$^{+}$01} ;
\item Lattices in $Sp(n,1)$ (this is implicit in \cite{Pan90}, but the details can be found in \cite{MS20});
\item Hyperbolic groups (due to G. Yu, see \cite{Yu05} )
\item Some relatively hyperbolic groups (see \cite{CD20}, \cite{GRT}).
\end{itemize}

\subsection{K-amenable groups}

Another weak form of amenability is K-theoretic amenability (or K-amenability for short). We give a rough version of the definition of K-amenable group. For the detailed definition of K-amenability we refer the readers to Definition 2.2 of \cite{Cun83}. Now, let us consider a discrete countable group $G$ and the epimorphism $\lambda_G: C^*G\rightarrow C^*_r G$ induced by the left-regular representation of $G$, where $C^*G$ is the maximal $C^*$-algebra of $G$ and $C^*_r G$ is the reduced $C^*$-algebra of $G$ . A characterization of amenability is that $G$ is amenable if and only if $\lambda_G$ is an isomorphism.  Roughly speaking we say that $G$ is K-amenable if  $\lambda_G$ induces isomorphisms in K-theory, i.e.,
$$(\lambda_G)_*: K_i(C^*G)\rightarrow K_i(C^*_r)$$
is an isomorphism for $i=0,1$. The real definition in \cite{Cun83} implies the above condition. 

\begin{ex}
The examples of K-amenable groups are as follows:
\begin{itemize}
\item[(1)] groups with Haagerup property ( see \cite{Tu99}); 
\item[(2)] 1-relator groups (see \cite{BBV99}); 
\item[(3)] the fundamental groups of Haken 3- manifolds, this class contains all  knot groups (see \cite{BBV99});
\item[(4)] $\mathbb{Z}^2 \rtimes SL_2(\mathbb{Z})$ (see \cite{JV84}).
\end{itemize}
\end{ex}

We state a theorem and a proposition which will be useful to prove Theorem \ref{K-amenable}.

\begin{thm}\label{kamensub}(Theorem 2.4. (a) in \cite{Cun83})
Let $G$ be a countable discrete group. If $G$ is K-amenable, then all subgroups of $G$ are K-amenable. 
\end{thm}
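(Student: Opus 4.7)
The plan is to invoke Cuntz's original characterization of K-amenability (\cite{Cun83}) via an equivariant Kasparov class: $G$ is K-amenable precisely when there exists a class $\eta \in KK^G(\mathbb{C},\mathbb{C})$ whose descents to the full and reduced crossed products exhibit $\lambda_G \colon C^*G \to C^*_r G$ as a $KK$-equivalence. The idea is then to restrict this witness from $G$ down to the subgroup $H$ and check that the restricted class still does the job.

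First, fix the K-amenable group $G$ together with a witnessing class $\eta \in KK^G(\mathbb{C},\mathbb{C})$. Second, apply the restriction functor $\mathrm{Res}^G_H \colon KK^G \to KK^H$, which at the level of Kasparov cycles is obtained simply by forgetting the rest of the $G$-action on the underlying Hilbert module and remembering only the $H$-action; set $\eta' := \mathrm{Res}^G_H(\eta)$. Third, argue that $\eta'$ is a K-amenability witness for $H$.

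The main obstacle is step three, i.e., verifying that the identities satisfied by $\eta$ really do descend. Two ingredients are needed: (i) functoriality of restriction with respect to the Kasparov product, so that any identity of the form $\eta \otimes x = x$ in $KK^G$ yields the corresponding identity in $KK^H$ after restriction; and (ii) compatibility of restriction with Kasparov's descent map, so that a $KK$-theoretic inverse of $\lambda_G$ descends, after restriction, to a $KK$-theoretic inverse of $\lambda_H$. For discrete groups this compatibility is fairly clean because the inclusion $H \hookrightarrow G$ induces honest $*$-homomorphisms $C^*H \hookrightarrow C^*G$ and $C^*_r H \hookrightarrow C^*_r G$ that intertwine $\lambda_H$ and $\lambda_G$, and Kasparov descent is natural with respect to such morphisms.

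A more concrete alternative, which bypasses most of the abstract machinery, is to work with an explicit geometric model of the K-amenability witness, such as a Julg--Valette-type $G$-equivariant Fredholm module on $\ell^2(G)$. Decomposing $\ell^2(G) = \bigoplus_{Hg \in H\backslash G} \ell^2(Hg)$ as an orthogonal sum of copies of the regular representation of $H$, one checks that the Fredholm-module data descends coordinate-wise to an $H$-equivariant Fredholm module implementing K-amenability of $H$ in the same way that the original one did for $G$. In either approach, the crux is that restriction preserves the purely categorical identities defining K-amenability, which is a routine exercise once the descent and restriction functors have been set up; no new analytic input about $G$ or $H$ is required beyond that which already goes into defining K-amenability.
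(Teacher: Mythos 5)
The paper itself gives no proof of this statement: it is quoted verbatim as Theorem 2.4(a) of \cite{Cun83}, so the relevant comparison is with Cuntz's argument, which runs as follows: $1_G\in KK^G(\mathbb{C},\mathbb{C})$ has a representative cycle whose $G$-representations are weakly contained in $\lambda_G$; restriction $\mathrm{Res}^G_H\colon KK^G(\mathbb{C},\mathbb{C})\to KK^H(\mathbb{C},\mathbb{C})$ is a unital ring homomorphism, so the restricted cycle represents $1_H$; and weak containment survives because $\mathrm{Res}^G_H\,\lambda_G\cong\bigoplus_{H\backslash G}\lambda_H$ is weakly equivalent to $\lambda_H$.

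Your abstract route has a genuine gap at step (ii). K-amenability in Cuntz's sense is not the bare assertion that $\lambda_G$ is a $KK$-equivalence witnessed by some class $\eta$; it is the assertion that the \emph{unit} $1_G$ admits a representative Kasparov cycle whose representations are weakly contained in the regular representation. That weak-containment property is what makes the descended cycle factor through $C^*_rG$ and hence makes $[\lambda_G]$ invertible. Your plan is to transport a $KK$-inverse $\beta\in KK(C^*_rG,C^*G)$ of $[\lambda_G]$ to a $KK$-inverse of $[\lambda_H]$ by ``naturality of descent'' with respect to the inclusions $C^*H\hookrightarrow C^*G$ and $C^*_rH\hookrightarrow C^*_rG$. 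But naturality only gives maps of $KK$-groups in variances that produce elements of $KK(C^*_rH,C^*G)$ or $KK(C^*_rG,C^*H)$; there is no functorial way to land in $KK(C^*_rH,C^*H)$. For an infinite-index subgroup there is no transfer or compression class available (the induction bimodule obtained by completing $C_c(G)$ over $C^*_rH$ does not carry the left $C^*_rG$-action by ``compact'' module operators), so the inverse simply cannot be pushed down by formal functoriality. Identities of the form $\eta\otimes x=x$ do restrict, as you say, but ``there exists a lift $\beta$ of $j_G(\eta)$ through $\lambda_G$'' is an existence statement about a lift, not such an identity, and it is exactly the part that fails to restrict abstractly. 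This is precisely why Cuntz builds weak containment into the definition: it is a property of a cycle over the group, and it is the one thing that does restrict.

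Your ``concrete alternative'' contains the correct key idea -- the coset decomposition $\ell^2(G)=\bigoplus_{Hg\in H\backslash G}\ell^2(Hg)$, i.e.\ $\mathrm{Res}^G_H\,\lambda_G\cong\bigoplus_{H\backslash G}\lambda_H$ -- but misapplies it: you assume the K-amenability witness is a Fredholm module on $\ell^2(G)$ itself. The definition grants only that the representations in the cycle are \emph{weakly contained} in $\lambda_G$, not that they are (multiples of) $\lambda_G$; Julg--Valette-type modules exist for groups acting on trees, not for an arbitrary K-amenable group. The repair is to apply the coset decomposition at the level of weak containment rather than to a hypothetical concrete model: if $\pi\prec\lambda_G$ then $\pi|_H\prec\lambda_G|_H\cong\bigoplus_{H\backslash G}\lambda_H\sim\lambda_H$. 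With that substitution, your second route becomes Cuntz's proof.
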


\begin{thm}\label{kamen+t} ( Remark 2.7.(b) of \cite{Cun83}, \cite{JV84}, Corollary 3.7 in \cite{JV84})
Any K-amenable discrete group with Property (T) is finite.
\end{thm}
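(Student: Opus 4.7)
The plan is to use the Kazhdan projection in $C^*G$ together with the trivial representation to detect a K-theory class that dies under $\lambda_G$, contradicting K-amenability when $G$ is infinite.

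Suppose $G$ is discrete, K-amenable, and has Property (T). Property (T) is equivalent to the existence of a central self-adjoint projection $p_T \in C^*G$, the Kazhdan projection, with the defining property that for every unitary representation $\pi$ of $G$ (extended to $C^*G$), the operator $\pi(p_T)$ equals the orthogonal projection onto the subspace of $G$-invariant vectors. I would first recall this fact (it follows from the isolation of the trivial representation in the Fell topology on the unitary dual). Applying this to the trivial representation $\epsilon \colon C^*G \to \mathbb{C}$ gives $\epsilon(p_T)=1$, so $\epsilon_*[p_T]=1$ in $K_0(\mathbb{C})=\mathbb{Z}$; in particular, $[p_T] \neq 0$ in $K_0(C^*G)$.

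Next I would assume for contradiction that $G$ is infinite, and apply the Kazhdan projection characterization to the left regular representation $\lambda$ on $\ell^2(G)$. Since $G$ is infinite, $\ell^2(G)$ has no non-zero $G$-invariant vectors, so $\lambda(p_T)=0$. Identifying $C^*_r G$ with the image of $\lambda$, this means that the canonical surjection $\lambda_G \colon C^*G \to C^*_r G$ sends $p_T$ to $0$, so $(\lambda_G)_*[p_T] = 0$ in $K_0(C^*_r G)$. Combined with the previous paragraph, $(\lambda_G)_*$ has nontrivial kernel on $K_0$, which contradicts K-amenability. Hence $G$ is finite.

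The only real content to verify carefully is the existence and the two key properties of the Kazhdan projection: centrality and self-adjointness in $C^*G$, $\epsilon(p_T)=1$, and $\pi(p_T)=0$ for any representation without invariant vectors. Once these are in hand, the argument is an elementary functoriality computation in K-theory, so the main conceptual obstacle is precisely the translation of Property (T) into the existence of a non-zero K-theory class in $K_0(C^*G)$ that is killed by $\lambda_G$ whenever $G$ is infinite.
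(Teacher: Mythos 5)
Your proof is correct, and it is essentially the argument behind the sources the paper cites: the paper itself gives no proof of this theorem, deferring to Cuntz (Remark 2.7(b)) and Julg--Valette (Corollary 3.7), both of which run exactly your Kazhdan-projection computation, namely that for infinite $G$ with Property (T) the class $[p_T]\in K_0(C^*G)$ satisfies $\epsilon_*[p_T]=1\neq 0$ while $(\lambda_G)_*[p_T]=0$, contradicting injectivity of $(\lambda_G)_*$. The one ingredient you defer --- existence of the Kazhdan projection $p_T\in C^*G$ with $\pi(p_T)$ the projection onto $\pi$-invariant vectors for every unitary representation $\pi$ --- is indeed a standard consequence of the isolation of the trivial representation in the Fell topology, so there is no gap.
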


\subsection{Hereditary super-rigidity and the stable properties of super-rigidity}

It is easy to see that if  $\Gamma_\alpha$ is super-rigid with respect to $G$ and $H\leq G$, then  $\Gamma_\alpha$ is also super-rigid with respect to $H$. This property possibly does not passes to the quotient group. But, we do not have any counter example. Now, we introduce hereditary super-rigidity of $\Gamma_\alpha$ which is a stronger property that super-rigidity. 

\begin{defn}
We say that $\Gamma_\alpha$  has \textit{hereditary super-rigidity} with respect to a group $G$ if any finite index subgroup $\Gamma'_\alpha$ has super-rigidity with respect to $G$ for a.e. $\alpha\in\mathcal{A}(\Omega,T^j)$.
\end{defn}

\begin{ex}\label{hsexo}
$\Gamma_\alpha$ has hereditary super-rigidity with respect to a-$FL^p$-menable group. We will prove this fact in Section 3.  
\end{ex}

Now, we study the behaviour of super-rigidity under a short exact sequence. 

\begin{thm}\label{shortexact}
Let $1\rightarrow N\xrightarrow{i} G\xrightarrow{q} G/N\rightarrow 1$ be a short exact sequence. If $\Gamma_\alpha$ has hereditary super-rigidity with respect to $N$ and super-rigidity with respect to $G/N$, then it has super-rigidity with respect to $G$. 
\end{thm}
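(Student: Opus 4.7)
The plan is to chase an arbitrary morphism $\phi_\alpha : \Gamma_\alpha \to G$ through the short exact sequence in two steps, using super-rigidity with respect to the quotient first and hereditary super-rigidity with respect to the kernel second. Fix any $\alpha\in\mathcal{A}(\Omega,T^j)$ in the intersection of the two full-measure sets provided by the two hypotheses (such an intersection is itself of full measure).

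\textbf{Step 1: Reduce to the kernel via the quotient.} Let $\phi_\alpha:\Gamma_\alpha\to G$ be any homomorphism and consider its post-composition with the quotient map,
\[
q\circ\phi_\alpha: \Gamma_\alpha \longrightarrow G/N.
\]
By the assumed super-rigidity of $\Gamma_\alpha$ with respect to $G/N$, the image $(q\circ\phi_\alpha)(\Gamma_\alpha)$ is finite for a.e. $\alpha$. Set
\[
\Gamma'_\alpha := \ker(q\circ\phi_\alpha) = \phi_\alpha^{-1}(N).
\]
Since the quotient $\Gamma_\alpha/\Gamma'_\alpha$ injects into $(q\circ\phi_\alpha)(\Gamma_\alpha)$, it is finite, so $\Gamma'_\alpha$ is a finite-index subgroup of $\Gamma_\alpha$.

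\textbf{Step 2: Apply hereditary super-rigidity to the kernel.} Restricting $\phi_\alpha$ to $\Gamma'_\alpha$ produces a homomorphism
\[
\phi_\alpha\big|_{\Gamma'_\alpha}: \Gamma'_\alpha \longrightarrow N,
\]
the target being $N$ by the very definition of $\Gamma'_\alpha$. Because $\Gamma_\alpha$ has hereditary super-rigidity with respect to $N$ and $\Gamma'_\alpha$ is a finite-index subgroup of $\Gamma_\alpha$, the image $\phi_\alpha(\Gamma'_\alpha)$ is finite for a.e. $\alpha$.

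\textbf{Step 3: Conclude.} The image $\phi_\alpha(\Gamma_\alpha)$ is the union of finitely many $\phi_\alpha(\Gamma'_\alpha)$-cosets, one for each coset of $\Gamma'_\alpha$ in $\Gamma_\alpha$. Hence
\[
|\phi_\alpha(\Gamma_\alpha)| \leq [\Gamma_\alpha:\Gamma'_\alpha]\cdot |\phi_\alpha(\Gamma'_\alpha)| < \infty,
\]
so $\phi_\alpha$ has finite image for a.e. $\alpha$, which is exactly super-rigidity of $\Gamma_\alpha$ with respect to $G$.

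The argument is essentially formal once the right notion of hereditary super-rigidity is in hand; there is no genuine obstacle beyond verifying that the two negligible exceptional sets (one coming from super-rigidity with respect to $G/N$, one from hereditary super-rigidity with respect to $N$) are controlled uniformly in $\phi_\alpha$ in the statements we invoke. This is why hereditary super-rigidity, rather than plain super-rigidity, is needed for $N$: the finite-index subgroup $\Gamma'_\alpha$ depends on the chosen $\phi_\alpha$, so we must be able to handle arbitrary finite-index subgroups simultaneously, which is exactly what the hereditary form supplies.
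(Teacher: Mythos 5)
Your proof is correct and takes essentially the same route as the paper: compose with $q$ to get a finite image in $G/N$, pass to the finite-index subgroup $\Gamma'_\alpha=\phi_\alpha^{-1}(N)$ (the paper writes this as the preimage of $\phi_\alpha(\Gamma_\alpha)\cap i(N)$, which is the same subgroup), and then apply hereditary super-rigidity of $\Gamma_\alpha$ with respect to $N$ to the restriction $\phi_\alpha|_{\Gamma'_\alpha}$. The only differences are cosmetic: you make explicit the final coset-counting bound $|\phi_\alpha(\Gamma_\alpha)|\leq[\Gamma_\alpha:\Gamma'_\alpha]\cdot|\phi_\alpha(\Gamma'_\alpha)|$ and the intersection of the two full-measure sets, both of which the paper leaves implicit.
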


\begin{proof} 
Let $\phi_\alpha:\Gamma_\alpha\rightarrow G$ be a group homomorphism. We consider the homomorphism $q\circ \phi_\alpha: \Gamma_\alpha\rightarrow G/N$. Since $\Gamma_\alpha$ has super-rigidity with respect to $G/N$,  $(q\circ \phi_\alpha)( \Gamma_\alpha)$ is finite. Now, we consider the subgroup $\phi_\alpha(\Gamma_\alpha)\cap i(N)$. Let $\Gamma'_\alpha$ be the inverse image of this subgroup under the map $\phi_\alpha$. Using the fact that $(q\circ \phi_\alpha)( \Gamma_\alpha)$ is finite, it is easy to see that $\Gamma'_\alpha$ is finite index subgroup of $\Gamma_\alpha$. Since $\Gamma_\alpha$ has hereditary super-rigidity with respect to $N$, $\phi_\alpha(\Gamma'_\alpha)$ is finite. Therefore, $\phi_\alpha(\Gamma_\alpha)$ is finite. 
\end{proof}

\section{Proofs of main theorems and corollaries}

\textbf{Proof of Theorem \ref{mcg}:} We will prove by induction on the complexity $\tau(S)$ of a surface $S$. From Remark \ref{complexity0}, we obtain that $\tau(S)\leq 0$ if and only if  $S$ is one of the following surfaces: annulus, sphere, pair of pants, disc or torus. If $S$ is a annulus, sphere, pair of pants or disc, $MCG(S)$ is trivial;  if $S$ is a torus, $MCG(S)$ is $SL_2(\mathbb{Z})$. In first case, the theorem is trivially true. Since $SL_2(\mathbb{Z})$ are a-$L^p$-menable, we obtain from Theorem \ref{a-lp-menability} that the theorem is true.

Now, we assume that $S_{g,b}$ is  any surface with $\tau(S_{g,b})>0$ and the theorem is true for all
surfaces $T$ with $\tau(T)<\tau(S_{g,b})$. 
 Let $\phi_\alpha:\Gamma_\alpha\rightarrow MCG(S_{g,b})$ be a group homomorphism and $H_\alpha=\phi_\alpha(\Gamma_\alpha)$ for all $\alpha\in\mathcal{A}(\Omega,T^j)$. Let $H_\alpha$ be infinite. We will prove the theorem by contradiction. The curve-complex $\mathcal{C}(S_{g,b})$ is hyperbolic by Theorem \ref{mcgcchyp}. $H_\alpha$ acts isometrically on $\mathcal{C}(S_{g,b})$. By Theorem \ref{nonelmhyp}, $H_\alpha$ acts elementarily on $\mathcal{C}(S_{g,b})$. By Theorem \ref{mcg3act}, any subgroup of $MCG(S_{g,b})$ having an elementary action on $\mathcal{C}(S_{g,b})$ is either virtually cyclic or reducible, i.e.,  fixes the isotopy class of a simple closed curve on $S$. Since no finite index subgroup of $\Gamma_\alpha$ surjects onto $\mathbb{Z}$ for a.e. $\alpha$ (by Corollary \ref{surjectZ}), $H_\alpha$ is not virtually cyclic.
Therefore, $H_\alpha$ is reducible, which implies that $H_\alpha$ fixes a finite, non-empty, collection $C$ of disjoint, non-peripheral, simple closed curves on $S_{g,b}$. Since $H_\alpha$ leaves $C$ invariant, it induces a permutation on the connected components $\{S_1,\cdots, S_n\}$ ($n>0$) of $S_{g,b}\setminus C$. Hence, there is a homomorphism from $H_\alpha$ to $\Sigma_n$, where $\Sigma_n$ is the group of permutations on $n$ letters. Let $H_\alpha^0$ be the kernel of this homomorphism. 

Since $H_\alpha^0$ leaves each subsurface $S_i$ invariant (here each $S_i$ is a compact surface with boundary), there is a homomorphism 
$$\psi_\alpha:H_\alpha^0\rightarrow MCG(S_1)\times\cdots MCG(S_n).$$
The kernel is abelian by \cite{BLM83} (see  Lemma 2.1).  We observe that $\psi_\alpha(g)=1$ if and only if $g$ is a product of Dehn twists about curves in $C$. Let the kernel of $\psi_\alpha$ be of finite index in $H_\alpha^0$.  It implies that $H_\alpha^0$, and hence $H_\alpha$, is virtually abelian. But it can not happen since no finite index subgroup of $\Gamma_\alpha$ surjects onto $\mathbb{Z}$. Therefore, the kernel of $\psi_\alpha$ is of infinite index in $H_\alpha^0$, which implies that $\psi_\alpha(H_\alpha^0)$ is infinite. Let $p_i: MCG(S_1)\times\cdots MCG(S_n)\rightarrow MCG(S_i)$ be the natural projection for all $i=1,\cdots,n$. 
Then $(p_j\circ \psi_\alpha)(H_\alpha^0)$ is infinite 
for at least one $1\leq j\leq n$. But, by induction, since $\tau(S_j)<\tau(S_{g,b})$, $(p_j\circ \psi_\alpha)(H_\alpha^0)$ must be finite, a contradiction. \hfill\(\Box\)

\vspace{5mm}

\textbf{Proof of Corollary \ref{braid}:} From Proposition \ref{braidmcg}, we have the following short exact sequence 
$$1\rightarrow\mathbb{Z}\rightarrow B_n\rightarrow MCG_x(S_{n+1})\rightarrow 1 $$
Since $\mathbb{Z}$ is a-$FL^p$-menable, using Corollary \ref{hsa-lp-menability}  we obtain that$\Gamma_\alpha$ has hereditary super-rigidity with respect to $\mathbb{Z}$ for a.e. $\alpha$. On the other hand, since $MCG_{x}(S_{n+1})$ is a subgroup of $MCG(S_{n+1})$, using Theorem \ref{mcg} we have $\Gamma_\alpha$ has super-rigidity with respect to $MCG_{x}(S_{n+1})$ for a.e. $\alpha$. Now, using Theorem \ref{shortexact} we conclude that $\Gamma_\alpha$ is super-rigid with respect to $B_n$ for a.e. $\alpha$. Hence, we have our corollary. \hfill\(\Box\)

\vspace{5mm}

\textbf{Proof of Theorem \ref{outfn}:} Let $\phi_\alpha:\Gamma_\alpha\rightarrow Out(F_N)$ be a group homomorphism and $H_\alpha=\phi_\alpha(\Gamma_\alpha)$ for all $\alpha\in\mathcal{A}(\Omega,T^j)$. $H_\alpha$ acts isometrically on the free factor complex $FF_N$ of $Out(F_N)$.  By Theorem \ref{FFNhyp}, it is hyperbolic. Moreover, by Theorem \ref{nonelmhyp}, $H_\alpha$ acts elementarily on $FF_N$. Now, by Theorem \ref{outfn3act}, any subgroup of $Out(F_N)$ having an elementary action on $FF_N$ is either virtually cyclic or reducible, i.e., virtually fixes the conjugacy class of a proper free factor of $F_N$. Since no finite index subgroup of $\Gamma_\alpha$ surjects onto $\mathbb{Z}$ for a.e. $\alpha$ (by Corollary \ref{surjectZ}), $H_\alpha$ is not virtually cyclic. 

Let $F_N=L\ast L'$, where $L$ is a proper free factor of $F_N$ and there exist a finite index subgroup $H^0_\alpha$ of $H_\alpha$ fixing the conjugacy class of $L$. We also assume that $\psi(L)=g_\psi^{-1}Lg_\psi$ for all $\psi\in H^0_\alpha$. Observe that $\psi\mapsto\restr{\psi}{L}$ defines a homomorphism from $H^0_\alpha$ to $Out(L)$. Likewise, the action on the quotient $F_N/\langle\langle L\rangle\rangle$ induces a homomorphism $H_\alpha^0\rightarrow Out(L')$. By induction, we know that the induced action of $H_\alpha^0$ on the abelianization of both $L$ and $L'$ factors through a finite group. Thus the action of $H_\alpha^0$ on the abelianization of $F_N=L\ast L'$ lies in a block triangular subgroup (with respect to a basis that is the union of bases for $L$ and $L'$) 
\[
\begin{bmatrix} J & 0 \\
 \star & J' \end{bmatrix}
\]
inside $GL_N(\mathbb{Z})$, where $J$ and $J'$ are finite. This matrix group is finitely generated and virtually abelian, whereas $\Gamma_\alpha$, and therefore $H_\alpha^0$, does not have a subgroup of finite index that maps onto $\mathbb{Z}$. Thus the action of $H_\alpha^0$ on the homology of $F_N$ factors through a finite group, and hence that of $H_\alpha$ does too, i.e., $H_\alpha\cap\overline{IA_N}$ has finite index in $H_\alpha$, where $\overline{IA_N}$ is the kernel of the map $Out(F_N)\rightarrow GL_N(\mathbb{Z})$ given by the action of $Out(F_N)$ on the first homology of $F_N$.

Now, by Theorem \ref{subIAn}, every non-trivial  subgroup of $\overline{IA_n}$ maps onto $\mathbb{Z}$. Since no finite index subgroup of $H_\alpha$ maps onto $\mathbb{Z}$, we obtain that $H_\alpha\cap\overline{IA_N}=\{1\}$. Therefore, $H_\alpha$ is finite. \hfill\(\Box\)

\vspace{5mm}

\textbf{Proof of Corollary \ref{autfn}:} Since the center of $F_N$ is trivial, we have $F_N\cong Inn(F_N)$. Therefore, we have the following short exact sequence

$$1\rightarrow F_N\rightarrow Aut(F_N)\rightarrow Out(F_N)\rightarrow 1$$

Since $F_N$ is a-$L^p$-menable, by Theorem \ref{hsa-lp-menability} $\Gamma_\alpha$ has hereditary super-rigidity with respect to $F_N$ for a.e. $\alpha$. On the other hand, by Theorem \ref{outfn} $\Gamma_\alpha$ has super-rigidity with respect to $Out(F_N)$ for a.e. $\alpha$. Now, using Theorem \ref{shortexact} we obtain that $\Gamma_\alpha$ has super-rigidity with respect to $Aut(F_N)$ for a.e. $\alpha$. \hfill\(\Box\)

\vspace{5mm}

\textbf{Proof of Theorem \ref{acylin}:}

 Let $\phi_\alpha: \Gamma_\alpha\rightarrow G$ be a group homomorphism for a.e. $\alpha\in\mathcal{A}(\Omega,T^j)$, where $G$ is an acylindrically hyperbolic group. We denote $\phi_\alpha(\Gamma_\alpha)$ by $H_\alpha$. We consider an acylindrical action of $G$ on a hyperbolic space $X$. Then, by Theorem \ref{nonelmhyp} and Theorem \ref{acy3act}, either the action of $H_\alpha$ on $X$ is elliptic or $H_\alpha$ is virtually cyclic and contains a loxodromic element. But, by Corollary \ref{surjectZ}, $H_\alpha$ can not be virtually cyclic.  As a consequence, the action of $H_\alpha$ on $X$ is elliptic. \hfill\(\Box\)

\vspace{5mm}

\textbf{Proof of Theorem \ref{hhg}:} Let $\phi_\alpha:\Gamma_\alpha\rightarrow G$ be a group homomorphism for all $\alpha\in \mathcal{A}(\Omega, T^j)$, where $G$ is a hierarchically hyperbolic group. Suppose $(\mathcal{X},\mathcal{S})$ is a hierarchically hyperbolic space obtained from the definition of hierarchically hyperbolic group given in Subsection 2.6.  We assume that $S$ denotes the maximally nested element in $ \mathcal{S}$ and  $\hat{\mathcal{C}}S$ denotes its associated hyperbolic space. Let $H_\alpha=\phi_\alpha(\Gamma_\alpha)$. $H_\alpha$ acts isometrically on the hyperbolic space $\hat{\mathcal{C}}S$. By Theorem \ref{nonelmhyp}, $H_\alpha$ acts elementarily on $\hat{\mathcal{C}}S$. We obtain from Theorem \ref{hhgacyact} that the action of $H_\alpha$ on $\hat{\mathcal{C}}S$ is acylindrical. Therefore, by Theorem \ref{acy3act}, if $H_\alpha$ has unbounded orbits in $\hat{\mathcal{C}}S$, then $H_\alpha$ is virtually cyclic. Since no finite index subgroups $\Gamma_\alpha$ surjects onto $\mathbb{Z}$ for a.e. $\alpha$ (by Corollary \ref{surjectZ}), $H_\alpha$ is not virtually cyclic for a.e. $\alpha$. Therefore, $H_\alpha$ has bounded orbits in $\hat{\mathcal{C}}S$. 
We have two cases:
\begin{itemize}
\item[(1)] Let $H_\alpha$ fix a point $p\in\partial \hat{\mathcal{C}}S$, the boundary of $\hat{\mathcal{C}}S$. Then, by Lemma \ref{hhgsubfin}, $| H_\alpha|<\infty$. 

\item[(2)] Let $H_\alpha$ do not have any fixed boundary point. Because of Proposition \ref{hhgtitsalt}, $H_\alpha$ does not have any irreducible axial element. Now, using Proposition \ref{hhg2act}, we obtain that there exists $U\in\mathcal{S}-\{S\}$ such that $| H_\alpha\cdot U|<\infty$. Therefore, there exists $U'\in\mathcal{S}-\{S\}$ such that some finite index subgroup $H^0_\alpha$ of $H_\alpha$ fixes $U'$. Using induction on complexity, we obtain that $H_\alpha$ is  finite for a.e. $\alpha$. 
\end{itemize}
Hence, we have our theorem. \hfill\(\Box\)

\vspace{5mm}

\textbf{Proof of Theorem \ref{a-lp-menability}:} 
Let $\phi_\alpha:\Gamma_\alpha\rightarrow G$ be a group homomorphism for a.e. $\alpha\in \mathcal{A}(\Omega, T^j)$, where $G$ is a a-$L^p$-menable group. Since the Property $F_{L^p}$  (i.e. having fixed point for any affine isometric action on $L^p$ space) passes to quotients (see Proposition 2.15 in \cite{BFGM07}), $\phi_\alpha(\Gamma_\alpha)$ has also Property 
$F_{L^p}$. On the other hand, a subgroup of an a-$L^p$-menable group, is a-$L^p$-menable. Therefore, $\phi_\alpha(\Gamma_\alpha)$ is an a-$L^p$-menable group. But, any group having both 
Property $F_{L^p}$ and a-$L^p$-menability is finite. Hence, we have our theorem.  \hfill\(\Box\)

\vspace{5mm}

\textbf{Proof of Corollary \ref{hsa-lp-menability}:} There is a general fact which follows from the proof of Theorem \ref{a-lp-menability}: Any group homomorphism from a group with Property $F_{L^p}$ to an a-$L^p$-menable group has finite image. Therefore, it suffices to prove that any finite index subgroup of $\Gamma_\alpha$ has Property $F_{L^p}$, which follows directly from Proposition 8.8 of \cite{BFGM07}.
Hence we have our corollary. \hfill\(\Box\)

\vspace{5mm}

\textbf{Proof of Theorem \ref{K-amenable}:} Let $\phi_\alpha:\Gamma_\alpha\rightarrow G$ be a group homomorphism for a.e. $\alpha\in \mathcal{A}(\Omega, T^j)$, where $G$ is a K-amenable group. Since Property (T) passes to quotient groups, using Theorem \ref{propt} we obtain that $\phi_\alpha(\Gamma_\alpha)$ has Property (T) for a.e. $\alpha$. On the other hand, $\phi_\alpha(\Gamma_\alpha)$ is K-amenable by Theorem \ref{kamensub}. Now, using Theorem \ref{kamen+t} we obtain that $\phi_\alpha(\Gamma_\alpha)$ is finite for a.e. $\alpha\in \mathcal{A}(\Omega, T^j)$.
\hfill\(\Box\)

\vspace{5mm}

\textbf{Proof of Corollary \ref{hsK-amenable}:} There is a general fact which follows from the proof of Theorem \ref{K-amenable}: Any group homomorphism from a group with Property (T) to a K-amenable group has finite image. Therefore, it suffices to prove that any finite index subgroup of $\Gamma_\alpha$ has Property (T), which follows directly from Proposition 2.5.7 of \cite{BHV08} (or Proposition 8.8 of \cite{BFGM07} replacing $p=2$). Hence we have our corollary. \hfill\(\Box\)

\section{Open questions}

Among linear groups, we know that Gromov's random monster group has super-rigidity with respect to  lattices in $SL_2(\mathbb{R}$, $SO(n,1)$, $SU(n,1)$, $Sp(n,1)$ and with respect to braid groups. But,
We do not know whether the super-rigidity phenomenon is exhibited for $SL_n(\mathbb{Z})$ ( $n\geq 3$) or for $Sp_{2n}(\mathbb{Z})$ ($n\geq 2$) .  We can pose this question in general framework as follows :

\vspace{5mm}

\textbf{Question 1:} Does $\Gamma_\alpha$ have super-rigidity with respect to $G$ for a. e. $\alpha\in\mathcal{A}(\Omega,T^j)$, where $G$ is a lattice in a product of higher rank simple Lie groups or higher rank simple algebraic groups over local fields ?

\vspace{5mm}

However, this super-rigidity phenomenon holds for $Out(F_N)$. But, we do not know to what extent this phenomenon extends to the outer automorphism group of a right-angled Artin group (RAAG) $A_\Lambda$, where $\Lambda$ is the defining graph. If $\Lambda$ is the graph with disjoint vertices, then $A_\Lambda$ is the free group $F_n$, and if $\Lambda$ is the complete graph, then $A_\Lambda$ is the free abelian group $\mathbb{Z}^n$. Moreover, $Out(\mathbb{Z}^n)=GL_n(\mathbb{Z})$. One therefore expects traits shared by $F_n$ to be shared by an arbitrary RAAG. Therefore, we have the following question:

\vspace{5mm}

\textbf{Question 2:} Does $\Gamma_\alpha$ have super-rigidity with respect to $Out(A_\Lambda)$ for a. e. $\alpha\in\mathcal{A}(\Omega,T^j)$?

\vspace{5mm}

We have seen that $\Gamma_\alpha$ has super-rigidity with respect to the following classes of biautomatic groups (for the definition see \cite{Eps92}):
\begin{itemize}
\item[(1)] hyperbolic groups; because of having a-$F_{L^p}$-menability $\Gamma_\alpha$ has super-rigidity with respect to them and for their biautomatic property we refer to \cite{Eps92});
\item[(2)] groups acting properly discontinuously and co-compactly on a $CAT(0)$ cubical complex; because of having Haagerup property $\Gamma_\alpha$ has super-rigidity with respect to them and for their biautomatic property we refer to \cite{Swi06};
\item[(3)] the central extension of hyperbolic groups; it follows from Theorem \ref{shortexact} that  $\Gamma_\alpha$ has super-rigidity with respect to them and for their biautomatic property (see \cite{NR97}) ;
\end{itemize}
But, we do not know whether it is true for all biautomatic groups.

\vspace{5mm}

\textbf{Question 4:} Does $\Gamma_\alpha$ have super-rigidity with respect to $G$ for a. e. $\alpha\in\mathcal{A}(\Omega,T^j)$, where $G$ is a biautomatic group?

\vspace{5mm}

We have seen that $\Gamma_\alpha$ has both super-rigidity and hereditary super-rigidity with respect to a-$FL^p$-menable group with $1< p<\infty$ and K-amenable group. But, we do not know the answer of the following question:

\vspace{5mm}

\textbf{Question 5:} Does $\Gamma_\alpha$ have hereditary super-rigidity with respect to $G$, where $G$ is a mapping class group or braid group or $Out(F_N)$ or $Aut(F_N)$ or hierarchically hyperbolic group ?

\section{Acknowledgements}

I would like to thank National Board for Higher Mathematics (NBHM), India for supporting this project financially. I would also like to thank John Mackay, Mahan Mj. and Romain Tessera for useful discussions.


\begin{thebibliography}{KM98b}

\bibitem[AD08]{AD08} G. Arzhantseva and T. Delzant, Examples of random groups, available on the authors' websites, 2008.

\bibitem[AT19]{AT19} G. Arzhantseva and R. Tessera, Admitting a coarse embedding is not preserved under group extensions, Int. Math. Res. Not. IMRN 2019, no. 20, 6480-6498.

\bibitem[BBV99]{BBV99} C. B\'{e}guin, H. Bettaieb, and A. Valette. K-theory for $C^*$-algebras of one-relator groups. K-Theory, 16(3):277-298, 1999.

\bibitem[BHV08]{BHV08} B. Bekka, P. de la Harpe, and A. Valette. Kazhdan's property (T), volume 11. Cambridge university press, 2008. 

\bibitem[BF14]{BF14} M. Bestvina and M. Feighn. Hyperbolicity of the complex of free factors, Adv. Math.
256 (2014), 104-155.

\bibitem[BFGM07]{BFGM07} U. Bader, A. Furman, T. Gelander, and N. Monod. Property (T) and rigidity for actions on Banach spaces. Acta Math., 198(1):57-105, 2007.

\bibitem[BHS17]{BHS17} J. Behrstock, M. Hagen, A. Sisto. Hierarchically hyperbolic spaces I: curve complexes for cubical groups, Geometry \& Topology 21 (2017) 1731-1804.

\bibitem[BLM83]{BLM83} J. Birman, A. Lubotzky, J. McCarthy, Abelian and solvable subgroups of the mapping class group. Duke Mathematical Journal, 1983, 50(4), 1107-1120.

\bibitem[BW11]{BW11} M. Bridson and R. Wade. Actions of higher-rank lattices on free groups, Compos. Math. 147 (2011), no. 5, p. 1573-1580.

\bibitem[CCJ$^{+}$01]{CCJ$^{+}$01}P. A. Cherix, M. Cowling, P. Jolissaint, P. Julg, and
A. Valette. Groups with the Haagerup property, volume 197 of
progress in mathematics, 2001.

\bibitem[CD20]{CD20} I. Chatterji and F. Dahmani, Proper actions on $l^p$-spaces for relatively hyperbolic groups. Annales Henri Lebesgue, Volume 3 (2020) , pp. 35-66.


\bibitem[Cun83]{Cun83} J. Cuntz. K-theoretic amenability for discrete groups. J. reine angew. Math., 344:180-195, 1983.

\bibitem[DHS17]{DHS17} M. Durham, M. Hagen, A. Sisto. Boundaries and automorphisms of hierarchically hyperbolic spaces, Geometry \& Topology 21 (2017) 3659-3758.

\bibitem[Eps92]{Eps92} D. A. Epstein, with J. Cannon, D. Holt, S. Levy, M. Paterson, W. Thurston, Word
processing in groups, Jones and Bartlett Publ., Boston 1992.

\bibitem[Gro87]{Gro87} M. Gromov .``Hyperbolic groups''. In: Essays in group theory. Vol. 8. Math. Sci. Res. Inst. Publ. Springer, New York (1987), pp. 75-263.


\bibitem[Gro03]{Gro03} M. Gromov, Random walk in random groups, GAFA, Geom. func. anal. 13 (2003).

\bibitem[GRT]{GRT} E. Guentner, E. Reckwerdt and R. Tessera, Proper actions and weak amenability of classical relatively hyperbolic groups, preprint.

\bibitem[GST20]{GST20} D. Gruber, A. Sisto, R. Tessera. Random Gromov's Monsters do not act non-elementarily on hyperbolic spaces, Proceedings of the American Mathematical Society, 
Volume 148, Number 7, July 2020, Pages 2773-2782.

\bibitem[HLS02]{HLS02} N. Higson, V. Lafforgue, and G. Skandalis. Counterexamples to the Baum-Connes conjecture, Geom. Funct. Anal. 12 (2002), no. 2, 330-354.

\bibitem[HM09]{HM09} M. Handel, L. Mosher. Subgroup classification in $Out(F_n)$, arXiv (2009).


\bibitem[HV98]{HV98} A. Hatcher, K. Vogtmann. The complex of free factors of a free group, Quart. J. Math. Oxford Ser. (2)
49 (1998), no. 196, 459-468


\bibitem[Iva92]{Iva92} N. V. Ivanov. Translations of Mathematical Monographs, Subgroups of Teichm\"{u}ller modular groups 115 , American Mathematical Society, Providence, RI, 1992, Translated from the Russian by E. J. F. Primrose and revised by the author.

\bibitem[JV84]{JV84} P. Julg and A. Valette. K-theoretic amenability for $SL_2(\mathbb{Q}_p)$, and the action on the associated tree. J. Funct. Anal., 58(2):194-215, 1984.

\bibitem[LPS88]{LPS88} A. Lubotzky, R. Phillips, and P. Sarnak, Ramanujan graphs, Combinatorica 8 (1988), no. 3,
261-277.

\bibitem[Mar82]{Mar82}G. A. Margulis, Explicit constructions of graphs without short cycles and low density codes, Combinatorica 2 (1982), no. 1, 71-78.


\bibitem[Mat97]{Mat97} J. Matou\v{s}ek. On embedding expanders into $l_p$ spaces. Israel J. Math., 102:189-197, 1997.

\bibitem[MM99]{MM99} H. A. Masur, Y. N. Minsky. Geometry of the complex of curves. I. Hyperbolicity, Invent. Math. 138 (1999), no. 1, p. 103-149.

\bibitem[MP89]{MP89} J. McCarthy, A. Papadapoulos, Dynamics on Thurston's sphere of projective measured foliations. Commentarii Mathematici Helvetici, 1989, 64, 133-166.

\bibitem[MS20]{MS20} A. Marrakchi, M. de la Salle, Isometric actions on $L_p$-spaces: dependence on the value of $p$. To appear in Compositio Mathematica. 

\bibitem[NR97]{NR97} W. D. Neumann and L. Reeves, Central Extensions of Word Hyperbolic Groups,
Annals of Mathematics, Second Series, Vol. 145, No. 1 (Jan., 1997), pp. 183-192 (10 pages).

\bibitem[NS11]{NS11} A. Naor, L. Silberman. Poincar\'{e} inequalities, embeddings, and wild groups, Compositio Mathematica , Volume 147 , Issue 5 , September 2011 , pp. 1546-1572.

 \bibitem[Osi16]{Osi16} D. Osin, Acylindrically hyperbolic groups, Trans. Amer. Math. Soc. 368
(2016), no. 2, p. 851-888

\bibitem[Pan90]{Pan90} P. Pansu, Cohomologie $L_p$ des vari\'{e}t\'{e}s à courbure n\'{e}gative, cas du degr\'{e} 1, Conference on partial differential equations and geometry, Rendiconti del Seminario Matematico, Universit\'{a} e Politecnico Torino, 1990, pp. 95-120.

\bibitem[Sel65]{Sel65} A. Selberg, On the estimation of Fourier coefficients of modular forms, 1965 Proc. Sympos. Pure Math., Vol. VIII pp. 1-15 Amer. Math. Soc., Providence, R.I.

\bibitem[Ser03]{Ser03} J. P. Serre. Trees. Springer Monographs in Mathematics. Springer-Verlag, Berlin, 2003. Translated from the French original by John Stillwell, Corrected 2nd printing of the 1980 English translation.

\bibitem[Sil03]{Sil03}  L. Silberman. Addendum to ``Random walk on random groups'' by M. Gromov, 
GAFA, Geom. funct. anal. , Vol. 13 (2003) 147-177.

\bibitem[Swi06]{Swi06} J. Swiatkowski,  Regular path systems and (bi)automatic groups, Geom. Dedicata 118 (2006), 23-48.

\bibitem[Tu99]{Tu99} J. L. Tu, La conjecture de Baum-Connes pour les feuilletages moyennables, K-Theory, 17(3):215-264, 1999.

\bibitem[Yu05]{Yu05} G. Yu, Hyperbolic groups admit proper affine isometric actions on $L^p$-spaces. Geometric and Functional Analysis, 15(5) :1144-1151, 2005.




\end{thebibliography}
\end{document}